\newcommand{\can}{\overline{\phantom{x}}}
\def\Cay{{\rm{Cay}}}
\newtheorem{dummy}{Dummy}
\newtheorem{lemma}[dummy]{Lemma}
\newtheorem{theorem}[dummy]{Theorem}
\newtheorem{proposition}[dummy]{Proposition}
\newtheorem{corollary}[dummy]{Corollary}
\theoremstyle{definition}
\newtheorem{example}[dummy]{Example}
\newtheorem{remark}[dummy]{Remark}
\subjclass[2000]{Primary: 17A35. }
\date{8.1.2010}
\author{S. Pumpl\"un}
\email{susanne.pumpluen@nottingham.ac.uk}
\address{School of Mathematical Sciences\\
University of Nottingham\\
University Park\\
Nottingham NG7 2RD\\
United Kingdom
}
\begin{document}

\title[division algebras from generalized Cayley-Dickson doublings]
{How to obtain division algebras from a generalized Cayley-Dickson doubling process}
\maketitle

\begin{abstract}
New families of eight-dimensional real division algebras with large derivation algebra
 are presented: We generalize the classical Cayley-Dickson doubling process
starting with a unital algebra  with involution over a field $F$ by allowing the scalar in the doubling to be an invertible
 element in the algebra. The resulting unital algebras are neither power-associative nor quadratic.
 Starting with a quaternion division algebra $D$, we obtain
 division algebras $A$ for all invertible scalars chosen in $D$ outside of $F$. This is independent on where the
 scalar is placed inside the product and three pairwise non-isomorphic families of eight-dimensional division
 algebras are obtained.
 Over $\mathbb{R}$, the derivation algebra of each such algebra $A$ is isomorphic to $ su(2)\oplus F$
and the decomposition of  $A$ into irreducible $su(2)$-modules has  the form
$1+1+3+3$ (denoting an irreducible $su(2)$-module by its dimension).
 Their opposite algebras yield more classes of pairwise non-isomorphic families
 of division algebras of the same type. We thus give an affirmative answer to a question posed by Benkart and Osborn
 in 1981.
 \end{abstract}

\section*{Introduction}

 It is well-known that every real division algebra must have dimension 1, 2, 4 or 8 [M-B, K], and, indeed, the same result applies to
 division algebras over a real closed field [D-D-H].

Real division algebras were roughly classified by Benkart and Osborn [B-O1, 2]
according to the isomorphism type of their derivation algebra.
In the introduction of [B-O2], it is noted that ``as one might expect, most of the classes of division algebras are
 natural generalizations
of the quaternions and octonions,'' with the only exception of one family of flexible algebras which includes Okubo's
 pseudo-octonion algebras. The authors  constructed division algebras over fields of characteristic 0 by slightly manipulating
the usual octonion multiplication. A list of 5 types of possible derivation algebras
${\rm Der}(A)$ was given. Using the representation theory of Lie algebras,
 for each type of derivation algebra families of real division algebras of dimension 8
were investigated which display this non-zero Lie  algebra as their derivation algebra in [B-O1, 2].
This work was continued in [R] and [Do-Z1, 2].

Flexible real division algebras were classified in subsequent papers by Benkart, Britten, Osborn and Darp\"{o}
 [B-B-O, D1, D2], see also Cuenca Mira, De Los Santos Villodres, Kaidi and Rochdi [C-V-K-R].
 Ternary derivations were used in [J-P] to describe large classes of division algebras. Power-associative
 real division algebras of dimension less or equal to 4 were classified in [Die], to list
 just of a few of the other known results.

 However, the problem to find all real division algebras which are neither alternative nor flexible
remains open.

 Let $F$ be a field. A {\it nonassociative
quaternion algebra} over $F$ is a four-dimensional unital $F$-algebra $A$ whose
nucleus is a quadratic \'etale algebra over $F$.  Nonassociative quaternion division algebras
canonically appeared as the most interesting case in the classification of the
algebras of dimension 4 over $F$ which contain
a separable field extension $K$ of $F$ in their nucleus (Waterhouse [W], see also
Althoen-Hansen-Kugler [Al-H-K] for $F=\mathbb{R}$). They
were first discovered by Dickson [Di] in 1935
  and Albert [A] in 1942 as early examples of real division algebras.
Waterhouse's classification shows that every nonassociative quaternion
algebra is a Cayley-Dickson doubling of its nucleus $S$,
where the scalar chosen for this doubling process is an invertible element in $S$ not contained
in the base field, see also [As-Pu].

In this paper, we extend this idea and construct algebras  over $F$ using the Cayley-Dickson doubling of a
unital algebra with involution, where the scalar chosen for the doubling process is an invertible element of the algebra which
is not contained in the base field.
 The description of these algebras is straightforward and base free.
  They are unital, not power-associative and not quadratic.
They do not even satisfy the weaker property that $u^2u=uu^2$ holds for all elements $u$, i.e. they are not
third-power associative.
 We call them {\it Dickson algebras}.
Most of the time, we look at the Cayley-Dickson doubling of a quaternion algebra
$D$ over  $F$.
There are three different possibilities  how to place the scalar $c\in D^\times\setminus F$
used in the doubling process inside the product. If $c$ lies in a separable quadratic subfield of a quaternion
division algebra $D$, each possibility yields a different family of algebras.
 Over $\mathbb{R}$,  for instance, there are the pairwise non-isomorphic division
 algebras ${\rm Cay}(\mathbb{H},i)$, ${\rm Cay}_m(\mathbb{H},i)$ and ${\rm Cay}_r(\mathbb{H},i)$.

The new eight-dimensional algebras ${\rm Cay}(D,c)$,
${\rm Cay}_m(D,c)$ and ${\rm Cay}_r(D,c)$ contain both quaternion and nonassociative quaternion algebras
 as subalgebras (unless $c\in D^\times$ is contained in a purely inseparable quadratic field extension $K$
of $F$) and are division algebras if and only if $D$ is a division algebra.
If $D$ is a quaternion division algebra then
$D$ is the only quaternion subalgebra of  ${\rm Cay}(D,c)$, ${\rm Cay}_m(D,c)$ and ${\rm Cay}_r(D,c)$.
Eight-dimensional Dickson  algebras are central.

 Properties of these families of eight-dimensional algebras are studied in Section 2, with the scalar alternatively placed in the middle or the left or
  right-hand-side of the relevant product.
In Section 3 we investigate when two eight-dimensional Dickson algebras are isomorphic and in Section 4 we look at their derivations.
 If  $F=\mathbb{R}$, the derivation algebra of $A={\rm Cay}(D,c)$, $A={\rm Cay}_m(D,c)$ and $A={\rm Cay}_r(D,c)$
 is isomorphic to $ su(2)\oplus F$ and the decomposition of  $A$ into irreducible $su(2)$-modules has the form
$1+1+3+3$. Thus these algebras yield new examples of real division algebras with large
derivation algebra and so do their opposite algebras, which are no longer Dickson algebras.
 To our knowledge, these are the first examples of algebras with derivation algebra isomorphic to
 $ su(2)\oplus F$ where the decomposition of $A$ into irreducible
$ su(2)$-modules has the form $1+1+3+3$. The known examples where the decomposition of $A$ into irreducible
$ su(2)$-modules has the form $1+1+3+3$ given by Rochdi [R] are real noncommutative Jordan algebras
 with derivation algebra isomorphic to $ su(2)$.
 We thus answer one of the questions posed by Benkart and Osborn [B-O1] in the affirmative.

In Section 5 we briefly investigate their opposite algebras which constitute three other new
classes of division algebras with derivation algebra isomorphic to
 $ su(2)\oplus F$ where the decomposition of $A$ into irreducible
$ su(2)$-modules has the form $1+1+3+3$,
 and in Section 6 we briefly investigate classes of 16-dimensional (division) algebras over suitable base fields.

\section{Preliminaries}

\subsection{Nonassociative algebras}

Let $F$ be a field. By ``$F$-algebra'' we mean a finite dimensional unital nonassociative algebra over $F$.

 A nonassociative algebra $A$ is called a {\it division algebra} if for any $a\in A$, $a\not=0$,
the left multiplication  with $a$, $L_a(x)=ax$,  and the right multiplication with $a$, $R_a(x)=xa$, are bijective.
$A$ is a division algebra if and only if $A$ has no zero divisors [Sch, pp. 15, 16].

For an $F$-algebra $A$, associativity in $A$ is measured by the {\it associator} $[x, y, z] = (xy) z - x (yz)$.
The {\it left nucleus} of $A$ is defined as $N_l(A) = \{ x \in A \, \vert \, [x, A, A]  = 0 \}$, the
{\it middle nucleus} of $A$ is
defined as $N_m(A) = \{ x \in A \, \vert \, [A, x, A]  = 0 \}$ and  the {\it right nucleus} of $A$ is
defined as $N_r(A) = \{ x \in A \, \vert \, [A,A, x]  = 0 \}$.
Their intersection, the {\it nucleus} of $A$ is then
$N(A) = \{ x \in A \, \vert \, [x, A, A] = [A, x, A] = [A,A, x] = 0 \}$. The nucleus is an associative
subalgebra of $A$ (it may be zero) and $x(yz) = (xy) z$ whenever one of the elements $x, y, z$ is in
$N(A)$.

The {\it commuter} of $A$ is defined as ${\rm Comm}(A)=\{x\in A\,|\,xy=yx \text{ for all }y\in A\}$
and the {\it center} of $A$ is ${\rm C}(A)=\{x\in A\,|\, x\in \text{Nuc}(A) \text{ and }xy=yx \text{ for all }y\in A\}$.

Let $S$ be a quadratic \'etale algebra over $F$ (i.e., a separable quadratic $F$-algebra
in the sense of [Knu, p.~4])
with canonical involution $\sigma \colon S \to S$, also written as
$\sigma =^-$, and with nondegenerate norm
$N_{S/F} \colon S \to S$, $N_{S/F} (s) =s \overline{s} = \overline{s} s$.
$S$ is a two-dimensional unital commutative associative algebra over $F$. With the diagonal action of $F$, $F \times F$ is a quadratic \'etale algebra with
canonical involution $(x, y) \mapsto (y, x)$.

\subsection{Nonassociative quaternion division algebras}

Let $F$ be a field. A {\it nonassociative quaternion algebra} over $F$ is a four-dimensional unital $F$-algebra $A$ whose
nucleus is a quadratic \'etale algebra over $F$. Let $S$ be a quadratic \'etale algebra over $F$ with
canonical involution $\sigma =^-$. For every $b \in S \setminus F$, the
vector space
$$\Cay (S, b) = S \oplus S$$
becomes a nonassociative quaternion algebra over $F$ with unit element $(1,0)$ and nucleus $S$ under the multiplication
$$(u, v) (u', v') = (u u' + b \overline{v}' v, v' u + v \overline{u}')$$
for $u, u', v, v' \in S$. Given any nonassociative quaternion algebra
$A$ over $F$ with nucleus $S$, there exists an element $b\in S^{\times}\setminus F$ such that $A\cong\Cay (S,b)$
[As-Pu, Lemma 1].

The multiplication is thus defined analogously as for a quaternion algebra, only that we require the scalar $b$ to lie outside of $F$.
Nonassociative quaternion algebras are neither power-associative nor quadratic and
 $\Cay (S, b)$ is a division algebra if and only if
$S$ is a separable quadratic field extension of $F$ [W, p.~369].

\begin{remark}
 Let $F$ have characteristic not 2 and $K=F(\sqrt{a})=F(i)$ for some $a\in F$ be a quadratic field extension with norm $N_{K/F}(x)=x\sigma(x)$,
 where $\sigma$ is the non-trivial automorphism of $K$ that fixes $F$. Let $b\in K\setminus F$, so that
 $A={\rm Cay}(K,b)$ is a nonassociative quaternion division algebra. Put
 $j=(0,1)\in {\rm Cay}(K,b).$
Then $A$ has $F$-basis $\{1,i,j,ij\}$ such that $i^2=a$, $j^2=b$ with $b\in K\setminus F$ and
 $jx=\sigma(x)j$
 (so in particular $ij=-ji$) for all $x\in K$.
\end{remark}

\begin{remark} Let $F=\mathbb{R}$ and $b,b'\in \mathbb{C}\setminus \mathbb{R}$. Then ${\rm Cay} (\mathbb{C}, b) \cong {\rm Cay} (\mathbb{C}, b')$
if and only if $b'=tb$ or $b'=t \bar b$ for some positive $t\in \mathbb{R}$  [Al-H-K, Thm. 14].
Two nonassociative quaternion  algebras ${\rm Cay}(K,b)$ and
${\rm Cay}(L,c)$ can only be isomorphic if $L=K$. Moreover,
$${\rm Cay}(K,b)\cong {\rm Cay}(K,c) \text{ iff } g(b)=N_{K/F}(d)c$$
for some automorphism $g\in {\rm Aut}(K)$ and some non-zero $d\in K$ [W].
\end{remark}

If $A$ is a real division algebra of dimension 4, then its derivation algebra is isomorphic to $su(2)$ or
${\rm dim}\, {\rm Der}A=0$ or 1 by the classification theorem in [B-O1].

\begin{theorem} ([W, Thm. 3]) Let $K$ be a separable field extension of $F$.
The derivations of the nonassociative quaternion division algebra $A={\rm Cay}(K,b)$ have the form
$D((u,v))=(0,sv)$ with $s\in K$ such that $s+\sigma(s)=0$. In particular, ${\rm Der}A\cong F$.
\end{theorem}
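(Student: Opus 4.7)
The plan is to determine every $F$-derivation $D$ of $A=\mathrm{Cay}(K,b)$ by first nailing down $D\vert_K$ and then parametrising $D$ by its value on the single element $j=(0,1)$.

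First I would note that $D(1)=0$ automatically, and that any derivation of an algebra preserves each of its nuclei (the identity $D[x,y,z]=[D(x),y,z]+[x,D(y),z]+[x,y,D(z)]$ shows $D(N(A))\subseteq N(A)$). Since $N(A)=K$ and $K/F$ is separable, the standard fact $\mathrm{Der}_F(K)=0$ (a derivation of $K$ sends the minimal polynomial relation $f(\alpha)=0$ to $f'(\alpha)D(\alpha)=0$ with $f'(\alpha)\neq0$) forces $D\vert_K=0$. Writing $A=K\oplus Kj$ with the relations $j^2=b$ and $jx=\overline{x}j$ for $x\in K$, the Leibniz rule then gives $D(u+vj)=vD(j)$ for all $u,v\in K$, so $D$ is determined by $D(j)=p+qj\in K\oplus Kj$.

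Next I would extract the constraints on $p,q$. Applying $D$ to the identity $j^2=b\in K$ and expanding $D(j)j+jD(j)$ via $jp=\overline{p}j$ and $jqj=\overline{q}b$, one gets $(q+\overline{q})b+(p+\overline{p})j=0$, i.e.\ $p+\overline{p}=0$ and $q+\overline{q}=0$. Applying $D$ to the product $(vj)(wj)=b\overline{w}v\in K$, the $Kj$-component of the right-hand side $D(vj)(wj)+(vj)D(wj)$ simplifies (using $\overline{p}=-p$ and the commutativity of $K$) to $vp(w-\overline{w})$. Since this must vanish for every $v,w\in K$, choosing $w\in K\setminus F$ (so that $w-\overline{w}\neq 0$) forces $p=0$. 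This last step is the real crux of the argument; it is the only place where the hypothesis $K\supsetneq F$ genuinely enters.

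After $p=0$ and $q+\overline{q}=0$ have been secured, I would verify that the candidate map $D_s(u,v)=(0,sv)$, for any $s\in K$ with $s+\overline{s}=0$, actually is a derivation. The only non-trivial identities to check are on products $(u,0)(0,v)$, $(0,v)(u,0)$ and $(0,v)(0,v')$; each reduces, using the commutativity of $K$ and $s=-\overline{s}$, to a routine Cayley-Dickson computation. Finally, the $F$-subspace $\{s\in K\mid s+\overline{s}=0\}=\ker\mathrm{Tr}_{K/F}$ is one-dimensional over $F$ (the ``pure imaginary'' line of the separable quadratic extension), so $s\mapsto D_s$ gives an $F$-linear isomorphism $\mathrm{Der}(A)\cong F$. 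Since $\mathrm{Der}(A)$ is one-dimensional, it is automatically abelian as a Lie algebra, matching the stated identification.
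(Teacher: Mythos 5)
Your argument is correct, but note that the paper offers no proof of this statement at all: it is quoted verbatim from Waterhouse ([W, Thm.\ 3]), so there is nothing in the text to compare against. Your blind reconstruction is a clean, self-contained derivation and all the computations check out: the nucleus-preservation identity $D[x,y,z]=[Dx,y,z]+[x,Dy,z]+[x,y,Dz]$ together with ${\rm Nuc}(A)=K$ and $\mathrm{Der}_F(K)=0$ for $K/F$ separable correctly kills $D|_K$; applying $D$ to $j^2=b$ gives $p+\overline{p}=q+\overline{q}=0$; and the $Kj$-component of the Leibniz rule on $(vj)(wj)$ is indeed $vp(w-\overline{w})$, which forces $p=0$ since $w\neq\overline{w}$ for any $w\in K\setminus F$ (this holds in every characteristic, as $F$ is exactly the fixed field of $\sigma$). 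The sufficiency check that $D_s(u,v)=(0,sv)$ is a derivation precisely when $s+\overline{s}=0$, and the identification of $\ker\mathrm{Tr}_{K/F}$ as a one-dimensional $F$-line, complete the argument. One small point worth making explicit if you write this up: the claim ${\rm Nuc}(A)=K$ (not merely $K\subseteq{\rm Nuc}(A)$) is what licenses the restriction step, and it is part of the paper's definition of a nonassociative quaternion algebra, so you are entitled to it. Your derivation is also consistent with the paper's later Propositions 20--22, which redo essentially the same computation in the eight-dimensional setting.
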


\section{The generalized Cayley-Dickson doubling process with different positions for the scalar $c$}

Let $D$ be a unital algebra over $F$ with an involution $\sigma:D\to D$. Let  $c\in D$ be an
invertible element not contained in $F$ such that $\sigma(c)\not=c$. Then the $F$-vector space
 $A=D\oplus D$ can be made into a unital algebra over $F$ via the multiplications
\begin{enumerate}
\item $\quad\quad\quad (u,v)(u',v')=(uu'+c (\sigma(v')v),v'u+v\sigma(u'))$
\item $\quad\quad\quad (u,v)(u',v')=(uu'+ \bar v'(cv),v'u+v\bar u')$\\
or
\item
$\quad\quad\quad (u,v)(u',v')=(uu'+ (\bar v'v)c,v'u+v\bar u')$
\end{enumerate}
for $u,u',v,v'\in D$.
If the algebra $D$ is not associative, we can  also study the multiplications
\begin{enumerate}
\item[(1*)] $\quad\quad\quad (u,v)(u',v')=(uu'+(c \sigma(v'))v,v'u+v\sigma(u'))$
\item[(2*)] $\quad\quad\quad (u,v)(u',v')=(uu'+ \bar (v'c)v,v'u+v\bar u')$\\
or
\item[(3*)]
$\quad\quad\quad (u,v)(u',v')=(uu'+ \bar v'(vc),v'u+v\bar u')$
\end{enumerate}
for $u,u',v,v'\in D$.
 The unit element of the new algebra $A$ is given by $1=(1,0)$ in each case.

$A$ is called the {\it Cayley-Dickson doubling of $D$ (with scalar $c$ on the left hand side,
 in the middle, or on the right hand side)} and denoted by ${\rm Cay}(D,c)$ for multiplication
(1), by ${\rm Cay}_m(D,c)$ for multiplication (2) and  by ${\rm Cay}_r(D,c)$ for multiplication (3).
If $D$ is not associative, we denote the algebra $A$ by  ${\rm Cay}^l(D,c)$ for multiplication
(1*), by ${\rm Cay}^l_m(D,c)$ for multiplication (2*) and  by ${\rm Cay}^r_r(D,c)$ for multiplication (3*).
We call every such algebra obtained from a Cayley-Dickson doubling of $D$, with the scalar $c$
 in the middle, resp. on the left or right hand side,  a {\it Dickson algebra} over $F$.

We can immediately say the following:

\begin{lemma}
Let $A={\rm Cay}(D,c)$, resp. $A={\rm Cay}_m(D,c)$, $A={\rm Cay}_r(D,c)$,
$A={\rm Cay}^l(D,c)$, $A={\rm Cay}^l_m(D,c)$ or $A={\rm Cay}^r_r(D,c)$.\\
(i) $D$ is a subalgebra of $A$.\\
(ii) If there is a unital subalgebra $(B,\sigma)$ of $ ( D,\sigma)$ with involution  such that $c\in B^\times$ then
 the Dickson algebra ${\rm Cay}(B,c)$, resp. ${\rm Cay}_m(B,c)$, ${\rm Cay}_r(B,c)$,
${\rm Cay}^l(B,c)$, ${\rm Cay}^l_m(B,c)$ or ${\rm Cay}^r_r(B,c)$, is a subalgebra of $A$.
\\ (iii) $A$ is not third power-associative and not quadratic.
\end{lemma}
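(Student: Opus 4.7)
The plan is to dispatch (i) and (ii) by direct inspection and concentrate effort on (iii), which I would verify using a single well-chosen test element.

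For (i), the map $u \mapsto (u,0)$ is an algebra embedding $D \hookrightarrow A$ because setting $v = v' = 0$ in any of the six multiplication formulas kills every term involving the scalar $c$ and returns $(uu', 0)$. For (ii), the hypothesis that $(B,\sigma)$ is a unital subalgebra of $(D,\sigma)$ stable under the involution, together with $c \in B^\times$, guarantees that the inner expressions appearing in the second coordinate of the product --- namely $c(\sigma(v')v)$, $\overline{v}'(cv)$, $(\overline{v}'v)c$ and the bracketed variants for the non-associative cases --- all lie in $B$ whenever $u,u',v,v' \in B$. Hence $B \oplus B$ is closed under the multiplication of $A$, and the restriction is by definition the corresponding Dickson product on $B$.

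For (iii), I would test both properties on the element $j := (0,1) \in A$. A short calculation from each of the six formulas, using $\sigma(1) = 1$, shows $j^2 = (c,0)$ in every case, since placing $c$ on the left, in the middle, or on the right of $\sigma(1)\cdot 1 = 1$ simply returns $c$. If $A$ were quadratic then $j^2 = \alpha j + \beta \cdot 1$ for some $\alpha, \beta \in F$; comparing components would force $c = \beta \in F$, contradicting $c \in D \setminus F$. For the failure of third-power associativity I would compute $j^2 \cdot j = (c,0)(0,1)$ and $j \cdot j^2 = (0,1)(c,0)$ from each formula: both products have vanishing first coordinate, and their second coordinates come out as $c$ and $\sigma(c)$ (in some order depending on the variant), so $j^2 j \neq j j^2$ precisely because $\sigma(c) \neq c$.

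The only genuine obstacle is organisational, namely the bookkeeping across the six multiplication variants, but this dissolves once one observes that with $v = v' = 1$ every bracketing of $c$ with $1$ and $\sigma(1) = 1$ collapses to $c$ itself, so the six computations reduce essentially to a single one. The key arithmetic input is the standing hypothesis $\sigma(c) \neq c$, which simultaneously rules out quadraticity (via $c \notin F$) and furnishes the witness of non-third-power associativity.
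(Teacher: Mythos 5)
Your proposal is correct and follows essentially the same route as the paper: parts (i) and (ii) by direct inspection, and part (iii) via the element $l=(0,1)$ with $l^2=(c,0)$, $l^2l=(0,c)$ and $ll^2=(0,\sigma(c))$. The only (harmless) difference is that for non-quadraticity you compare components of $j^2=\alpha j+\beta\cdot 1$ directly, whereas the paper deduces it from the fact that quadratic unital algebras are power-associative; both are valid.
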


\begin{proof} (i) and (ii) are trivial.
\\ (iii) For $l=(0,1)$ we have $l^2=(c,0)$ and $ll^2=(0,\sigma(c))$ while $l^2l=(0,c)$, hence $A$ is not third
power-associative. Every quadratic unital algebra is clearly power-associative, so
$A$ is not quadratic.
\end{proof}

From now on, unless otherwise specified,
$$\text{ let } D \text{ be a quaternion algebra over } F.$$
 Let $\sigma=\can:D\to D$ be the
canonical involution of $D$ and $c\in D^\times\setminus F$.
Let $A={\rm Cay}(D,c)$, $A={\rm Cay}_m(D,c)$ or $A={\rm Cay}_r(D,c)$.

\begin{remark} (i) Let $D$ be a division algebra over $F$. Then every element $c\in D$ not contained in $F$
lies in some quadratic  field extension $K\subset D$ of $F$ which is either purely inseparable or cyclic [Dr, p.~160].
\\ (ii)  Let $c\in D^\times\setminus F$.
 If $F$ has characteristic not 2, there is a separable quadratic  field extension $K\subset D$ such that $c\in K$
and the nonassociative quaternion algebra ${\rm Cay}(K,c)$ is a subalgebra of $A$.\\
Suppose $F$ has characteristic 2.
If $c$ lies in a purely inseparable extension $K'\subset D$ then ${\rm Cay}(K',c)$ is a subalgebra of $A$.
\end{remark}

\begin{remark} Let $F$ have characteristic not 2 and let $D=(a,b)_F$.
 $(a,b)_F$ has the standard $F$-basis $\{1,i,j,k\}$ such that $i^2=a$, $j^2=b$, $ij=k$ and
 $ij=-ji$. Let $l=(0,1)\in D\oplus D={\rm Cay}(D,c)$.
 Let $c\in D^\times\setminus F$. The Dickson algebras ${\rm Cay}(D,c)$, ${\rm Cay}_m(D,c)$ and ${\rm Cay}_r(D,c)$ have
  the standard $F$-basis
 $$\{1,i,j,k,l,il,jl,kl\}$$
 with  $l^2=c$. Note that $ul=l\sigma(u)$ for all $u\in D$.
\end{remark}

\begin{lemma}
(a) Let $A={\rm Cay}(D,c)$. Then
 ${\rm Nuc}_l(A)={\rm Nuc}_m(A)={\rm Nuc}_r(A)=F$, hence  ${\rm Nuc}(A)=F$.
Furthermore, ${\rm Comm}(A)=F$ and ${\rm C}(A)=F.$
\\ (b) Let $A={\rm Cay}_m(D,c)$. Then ${\rm Nuc}_l(A)={\rm Nuc}_m(A)={\rm Nuc}_r(A)=F$, thus ${\rm Nuc}(A)=F$.
  If $D$ is a division algebra then ${\rm Comm}(A)=F$, else
$${\rm Comm}(A)=\{(u,v)\,|\, u\in F \text{ and } v(\overline{u'}-u')=0, \overline{v'} cv=\bar vcv'
\text{ for all } u',v'\in D \}$$
$$\subset \{(u,v)\,|\, u\in F,  cv=\bar v c ,  N_{D/F}(v)=0 \}.$$
Thus ${\rm C}(A)=F$ in any case.
\\ (c)  Let $A={\rm Cay}_r(D,c)$. Then
 ${\rm Nuc}_l(A)={\rm Nuc}_m(A)={\rm Nuc}_r(A)=F$, hence ${\rm Nuc}(A)=F$,
and ${\rm Comm}(A)=F$ and ${\rm C}(A)=F.$
\end{lemma}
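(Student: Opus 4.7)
The inclusion $F\subseteq\mathrm{Nuc}_l(A)\cap\mathrm{Nuc}_m(A)\cap\mathrm{Nuc}_r(A)\cap\mathrm{Comm}(A)$ is immediate since scalars in $F$ both commute and associate with every element of $A$, so the task lies entirely in the reverse inclusions (and, in part (b), in matching the explicit description when $D$ is not a division algebra). My plan is to work in the decomposition $A=D\oplus Dl$ with $l=(0,1)$, writing each element as $x+yl$ with $x,y\in D$, and to exploit the standing relations $l^2=c$, $ul=l\bar u$ for $u\in D$, together with
\[
(v_1l)(v_2l)=\begin{cases}c\,\bar v_2 v_1 &\text{in case (a)},\\ \bar v_2\,c\,v_1 &\text{in case (b)},\\ \bar v_2 v_1\,c &\text{in case (c)},\end{cases}
\]
so that every associator and every commutator in $A$ reduces to an identity in the associative quaternion algebra $D$, where $Z(D)=F$.

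For the one-sided nuclei, suppose $x+yl$ lies in the candidate nucleus and evaluate the associator on a short list of test triples. In $\mathrm{Nuc}_l(A)$, testing $[x+yl,l,l]=0$ gives $xc=cx$ together with a twisted relation between $y$ and $c$ (for instance $y\bar c=cy$ in case (a)), already confining $x$ to the centralizer $F[c]$ of $c$; testing $[x+yl,d,l]=0$ and $[x+yl,l,d]=0$ for $d$ ranging over $D$ then isolates $l$-components of the form $y\bar d=\bar d y$ and $xd=dx$, pinning $x,y\in Z(D)=F$. Combined with $y(c-\bar c)=0$ and the standing hypothesis $\bar c\ne c$, this forces $y=0$. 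The verifications of $\mathrm{Nuc}_m(A)=F$ and $\mathrm{Nuc}_r(A)=F$ proceed by the same recipe, with $l$ placed in the remaining two slots of the associator and a generic $d\in D$ supplied in the third.

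For the commuter in (a) and (c), comparing $(x+yl)u'=u'(x+yl)$ at $u'\in D$ yields $xu'=u'x$ and $y\bar u'=yu'$, so $x\in Z(D)=F$ and $y(\bar u'-u')=0$ for every $u'\in D$; since a quaternion algebra admits an invertible element of reduced trace zero, $y=0$ and thus $\mathrm{Comm}(A)=F$. The main obstacle is part (b): placing $c$ in the middle of the product prevents it from being extracted, coupling the constraints on $y$ with $c$ itself. Expanding $(x+yl)(u'+v'l)=(u'+v'l)(x+yl)$ under multiplication (2) and setting $v'=0$ forces $x\in Z(D)=F$, after which the surviving universal conditions are exactly $y(\bar u'-u')=0$ and $\bar v'(cy)=\bar y(cv')$; the reverse inclusion is a direct reversal of the same computation. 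Specializing $v'=1$ in the second condition yields $cy=\bar y c$, and if $N_{D/F}(y)\ne 0$ then $y$ is invertible, so the first condition forces $\bar u'=u'$ for every $u'\in D$, contradicting $D\ne F$. Hence $N_{D/F}(y)=0$, giving the claimed containment; when $D$ is a division algebra this already forces $y=0$ and $\mathrm{Comm}(A)=F$. In every case $\mathrm{C}(A)=\mathrm{Nuc}(A)\cap\mathrm{Comm}(A)=F$.
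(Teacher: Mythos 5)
The paper offers no proof of this lemma beyond the remark that the computations are ``straightforward but tedious,'' so your direct verification in the decomposition $A=D\oplus Dl$ is exactly the intended argument, and the test associators and commutators you choose do suffice: I checked, for instance, that $[x+yl,d,l]$ and $[x+yl,l,l]$ pin down ${\rm Nuc}_l$, and that the analogous placements of $l$ work for the middle and right nuclei, where one gets $cy\in F$ (resp.\ $\bar y c\in F$) and then $y=0$ from $c\neq\bar c$. Your treatment of part (b) correctly reproduces the paper's explicit description of ${\rm Comm}({\rm Cay}_m(D,c))$ and the containment with $N_{D/F}(v)=0$.

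One small caveat: to deduce $y=0$ from $y(\overline{u'}-u')=0$ for all $u'\in D$ you invoke ``an invertible element of reduced trace zero.'' That witness only works in characteristic not $2$; in characteristic $2$ one has $\overline{u'}-u'=T_{D/F}(u')\cdot 1$, which vanishes precisely on trace-zero elements, so you should instead take any $u'$ with $T_{D/F}(u')\neq 0$ (such elements exist since the reduced trace of a quaternion algebra is surjective). The lemma is stated for arbitrary characteristic, so this case should be covered; with that adjustment, and granting that the remaining nucleus computations follow the recipe you describe, the proof is complete.
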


The proofs  are straightforward but tedious computations.

Over a field of characteristic 2, we define the quaternion algebra $[a,b)$ over $F$ via
$[a,b)=\langle i,j\,|\, i^2+i=a,j^2=b,ij=ji+j \rangle$
with $a\in F$ and $b\in F^\times$. Obviously,  $[a,b)={\rm Cay}(L,b)$ with $L=F(i)$ where
$i^2+i=a$ is a separable quadratic field extension [S, p.~314].

\begin{theorem} Let  $D$ be a division algebra and $c\in D^\times\setminus F$. Then
$D$ is the only quaternion subalgebra of the Dickson algebra $A={\rm Cay}(D,c)$.
\end{theorem}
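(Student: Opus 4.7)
The plan is to compare elements of $A$ with elements of a quaternion subalgebra via the quadratic property. Recall that every quaternion $F$-algebra $Q$ is quadratic: every $x\in Q$ satisfies $x^{2}=t(x)x-n(x)$ with $t(x),n(x)\in F$. So if I can identify the set $\mathcal{Q}(A)$ of quadratic elements of $A$ (those $x\in A$ for which $1,x,x^{2}$ are $F$-linearly dependent) and show $\mathcal{Q}(A)\subseteq D$, then any quaternion subalgebra $Q\subseteq A$ lies in $D$, and the dimension equality $\dim_{F}Q=4=\dim_{F}D$ forces $Q=D$.

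So the main step is to show that every element of $A\setminus D$ fails to be quadratic. Pick $x=(u,v)\in A$ with $v\neq 0$. Using the multiplication (1) and writing $N=N_{D/F}$, $T=T_{D/F}$ for the reduced norm and trace of $D$, a direct computation gives
\[
x^{2}=(u,v)(u,v)=\bigl(u^{2}+c\,\overline{v}v,\;vu+v\overline{u}\bigr)=\bigl(u^{2}+c\,N(v),\;T(u)\,v\bigr).
\]
Suppose $x^{2}=\alpha x+\beta\cdot 1$ for some $\alpha,\beta\in F$. Comparing the second components gives $T(u)v=\alpha v$, and since $v$ is invertible in the division algebra $D$, this forces $\alpha=T(u)$. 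Comparing the first components then yields
\[
u^{2}+c\,N(v)=T(u)\,u+\beta,
\]
and substituting the Hamilton--Cayley relation $u^{2}=T(u)u-N(u)$ in $D$ gives $c\,N(v)=\beta+N(u)\in F$.

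Now I use the hypotheses on $c$ and $D$. Since $c\notin F$ and $c$ is invertible, the equation $c\,N(v)\in F$ forces $N(v)=0$; but $D$ is a division algebra, so $N(v)=0$ implies $v=0$, contradicting our choice of $x$. Hence no element of $A$ with nonzero second component is quadratic, i.e.\ $\mathcal{Q}(A)\subseteq D$. Conversely, $D\subseteq\mathcal{Q}(A)$ since $D$ is quadratic, so $\mathcal{Q}(A)=D$. This argument is characteristic-free, which is the only small subtlety to watch: in characteristic $2$ the trace condition $T(u)=\alpha$ and the relation $u^{2}=T(u)u-N(u)$ still hold, and the final step only uses that $D$ has no zero divisors, so the reasoning goes through unchanged. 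I do not foresee a serious obstacle; the calculation of $x^{2}$ is short and the concluding dimension argument is immediate.
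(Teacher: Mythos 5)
Your proof is correct, and it reorganizes the argument in a way that is genuinely cleaner than the paper's. The paper fixes a presentation of the putative quaternion subalgebra $B$ (as $(e,f)_F$ in characteristic $\neq 2$, as $[e,f)$ in characteristic $2$), applies the computation only to the standard generators $X,Y$ satisfying $X^2=e$ resp.\ $X^2+X=e$, and then still has to chase the anticommutation relation to see that the whole standard basis of $B$ lies in $D$. Your version isolates the real content as a single statement --- the set of elements $x\in A$ with $1,x,x^2$ linearly dependent is exactly $D$ --- proved by the same computational kernel (the second component forces $\alpha=T(u)$, the first component together with $u^2=T(u)u-N(u)$ forces $cN(v)\in F$, hence $v=0$ since $c\notin F$ and the norm of a quaternion division algebra is anisotropic). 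Since a quaternion subalgebra is quadratic over $F$, it sits inside $\mathcal{Q}(A)=D$ and equals $D$ by dimension count. This buys you a characteristic-free proof with no case split and no basis chasing, and it actually yields the stronger fact that $D$ is precisely the locus of quadratic elements of $A$ (which would also streamline the paper's Theorem 10 on nonassociative quaternion subalgebras). The only point to make explicit is the standing convention, shared with the paper, that a quaternion subalgebra contains $1_A$, so that its Cayley--Hamilton relation $x^2=t(x)x-n(x)1_A$ is a linear dependence of $1_A,x,x^2$ in $A$.
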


\begin{proof} We distinguish two cases.\\
 (i)  Let $F$ have characteristic not 2. Suppose there is a quaternion subalgebra $B=(e,f)_F$ in $A$. Then there is an element $X\in A$, $X=(u,v)$
with $u,v\in D$ such that $X^2=e\in F^\times$ and an element $Y\in A$, $Y=(w,z)$
with $w,z\in D$ such that $Y^2=f\in F^\times$ and $XY+YX=0$. The first equation implies
$$u^2+cN_{D/F}(v)=e \text{ and } v(u+\sigma(u))=0.$$
If $v=0$, then $u^2=e$ and $X=(u,0)\in D$.\\
If $v\not=0$ then $v$ is invertible and $\sigma(u)=-u$. This implies $u^2=-N_{D/F}(u)$, thus $e+N_{D/F}(u)=cN_{D/F}(v)$ i.e.
$N_{D/F}(v^{-1})(e+N_{D/F}(u))=c$ which is a contradiction since
the right hand side lies in $D$ and not in $F$, while the left hand side lies in $F$.\\
Analogously, the second equation implies $w^2=f$ and $Y=(w,0)$, $w\in D$. Hence $(0,0)=XY+YX=(uw+wu,0)$ means
$uw+wu=0$ and so the standard basis $1,X=u,Y=w,XY=uw$ for the quaternion algebra $(e,f)_F$ lies in $D$ and we obtain
$D=(e,f)_F$.\\
(ii) Let $F$ have characteristic 2.
 Suppose there is a quaternion subalgebra $B=[e,f)$ in $A$.
Then there is an element $X\in A$, $X=(u,v)$
with $u,v\in D$ such that $X^2+X=e\in F$ and an element $Y\in A$, $Y=(w,z)$
with $w,z\in D$ such that $Y^2=f\in F^\times$ and $XY=YX+Y$.
Analogously as in the above proof, the equation $Y^2=f\in F^\times$ implies $w^2=f$ and $Y=(w,0),$ $ w\in D$.

The first equation implies
$$u^2+cN_{D/F}(v)+u=e \text{ and } v(u+\sigma(u)+1)=0.$$
If $v\not=0$ then $v$ is invertible and $\sigma(u)+u+1=0$.
 This implies $u=-(\sigma(u)+1)$, thus $\sigma(u^2)+\sigma(u)+cN_{D/F}(v)=e$ i.e.
we get $\sigma(u^2+u)+cN_{D/F}(v)=u^2+u+cN_{D/F}(v)$ which yields $\sigma(u^2+u)=u^2+u$. Hence
$u^2+u\in F.$ This implies that $cN_{D/F}(v)=e-(u^2+u)\in F $, a contradiction.

Hence  $v=0$ which implies $u^2+u=e$ and $X=(u,0)\in D$. Now $XY=YX+Y$ means
$uw=wu+w$ and the standard basis $1,X=u,Y=w$ for the quaternion algebra $[e,f)$ lies in $D$. We obtain
$D=[e,f)$.
\end{proof}

If $D$ is not a division algebra we only get $e+N_{D/F}(u)=cN_{D/F}(v)$ in the proof of (i).
Here it could happen that the right hand side is 0, the left hand side still lies in $F$, so we get no contradiction
unless $v$ is not a zero divisor. A similar argument applies in (ii).

\begin{theorem} Let  $D$ be a division algebra and $c\in D^\times\setminus F$. Then
$D$ is the only quaternion subalgebra of the algebras ${\rm Cay}_m(D,c)$ and ${\rm Cay}_r(D,c)$.
\end{theorem}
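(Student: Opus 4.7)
The plan is to adapt the case analysis from the previous theorem verbatim to each of the two new multiplications. Assume toward a contradiction that $B$ is a quaternion subalgebra of $A$, with standard generators $X=(u,v)$ and $Y=(w,z)$. These satisfy $X^2=e\in F^\times$, $Y^2=f\in F^\times$ and $XY+YX=0$ if ${\rm char}(F)\neq 2$, or $X^2+X=e$, $Y^2=f\in F^\times$ and $XY=YX+Y$ if ${\rm char}(F)=2$. The goal of the main computation is to prove $v=0$ and $z=0$, so that $X,Y\in D$; the closing argument then mirrors the end of the proof of the previous theorem, since the relation between $X$ and $Y$ lives in $D$ and $\{1,X,Y,XY\}$ spans a quaternion subalgebra of the four-dimensional algebra $D$, forcing $B=D$.

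The core step is the computation of $X^2$ in each case. For $A={\rm Cay}_m(D,c)$, associativity of $D$ gives
\[
X^2=\bigl(u^2+\bar v c v,\; v(u+\bar u)\bigr),
\]
so $X^2=e$ (or $X^2+X=e$ in characteristic $2$) yields in the second component $v(u+\bar u)=0$ (resp.\ $v(u+\bar u+1)=0$). If $v\neq 0$, then $v$ is invertible and $\bar u=-u$ (resp.\ $\bar u=u+1$), which puts $u^2$ (resp.\ $u^2+u$) into $F$, whence $\bar v c v\in F$. Since $D$ is associative and $N_{D/F}(v)=v\bar v=\bar v v$ is central and nonzero, left-multiplication by $\bar v^{-1}$ and right-multiplication by $v^{-1}$ yield
\[
c=N_{D/F}(v)^{-1}\cdot\bar v c v\in F,
\]
contradicting $c\notin F$. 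Hence $v=0$, and the identical argument applied to $Y$ gives $z=0$. For $A={\rm Cay}_r(D,c)$ the analogous formula is
\[
X^2=\bigl(u^2+N_{D/F}(v)c,\; v(u+\bar u)\bigr),
\]
and the contradiction is more immediate: if $v\neq 0$, the same trace argument puts the $u$-part into $F$, so $N_{D/F}(v)c\in F$ with $N_{D/F}(v)\in F^\times$ forces $c\in F$.

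I do not expect any serious obstacle beyond careful bookkeeping. The only mild subtlety, specific to ${\rm Cay}_m(D,c)$, is passing from $\bar v c v\in F$ to $c\in F$; this is resolved by the fact that the reduced norm of $D$ is central, so that $\bar v^{-1}v^{-1}=(v\bar v)^{-1}=N_{D/F}(v)^{-1}$ is a scalar. Once $v=z=0$, the relation $XY+YX=0$ or $XY=YX+Y$ becomes a relation in $D$ between $u=X$ and $w=Y$, exhibiting $1,u,w,uw$ as a quaternion basis of a subalgebra of $D$; by dimension this subalgebra is all of $D$, so $B=D$ as required. The proof thus decomposes cleanly into "kill $v$ and $z$ using the first-component equation" and "finish inside $D$", exactly as in the previous theorem.
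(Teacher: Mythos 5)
Your proposal is correct and follows essentially the same route as the paper: use the first component of $X^2=e$ (resp.\ $X^2+X=e$) together with $v(u+\bar u)=0$ to force $\bar v c v\in F$ (resp.\ $N_{D/F}(v)c\in F$) when $v\neq 0$, derive $c\in F$ by centrality of the norm, conclude $v=z=0$, and finish inside $D$. Your conjugation step $c=N_{D/F}(v)^{-1}\bar v c v$ is just a cleaner phrasing of the paper's ``multiply by $v$'' argument.
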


\begin{proof}

The proof for $A={\rm Cay}_r(D,c)$ is analogous to the one of Theorem 8, we include the one for
$A={\rm Cay}_m(D,c)$ for the sake of the reader:\\
(i)  Let $F$ have characteristic not 2 and let $D=(a,b)_F$ be a division algebra. Suppose there is a quaternion subalgebra $B=(e,f)_F$ in $A$. Then there is an element $X\in A$, $X=(u,v)$
with $u,v\in D$ such that $X^2=e\in F^\times$ and an element $Y\in A$, $Y=(w,z)$
with $w,z\in D$ such that $Y^2=f\in F^\times$ and $XY+YX=0$. The first equation implies
$$u^2+\bar vcv=e \text{ and } v(u+\bar u)=0.$$
If $v\not=0$ then $v$ is invertible and $\bar u =-u$. This implies $u^2=-N_{D/F}(u)$, thus
$e+N_{D/F}(u)=\bar vcv$, so $\bar vcv\in F$. Thus $N_{D/F}(v)cv \in F v $ (multiply with $v$ from the right) which implies $c\in F$,
a contradiction. Thus $v=0$.
\\
Analogously, the second equation implies $w^2=f$ and $Y=(w,0)$, $w\in D$. Hence $(0,0)=XY+YX=(uw+wu,0)$ means
$uw+wu=0$ and so the standard basis $1,X=u,Y=w,XY=uw$ for the quaternion algebra $(e,f)_F$ lies in $D$ and we obtain
$D\cong(e,f)_F$.\\
(ii)
 Let $F$ have characteristic 2 and let $D=[a,b)$ be a division algebra.
 Suppose there is a quaternion subalgebra $B=[e,f)$ in $A$.
Then there is an element $X\in A$, $X=(u,v)$
with $u,v\in D$ such that $X^2+X=e\in F$ and an element $Y\in A$, $Y=(w,z)$
with $w,z\in D$ such that $Y^2=f\in F^\times$ and $XY=YX+Y$.
Analogously as in (i), the  equation $Y^2=f\in F^\times$ implies $w^2=f$ and $Y=(w,0),$ $ w\in D$.

The first equation implies
$$u^2+\bar v cv+u=e \text{ and } v(u+\sigma(u)+1)=0.$$
If $v\not=0$ then $v$ is invertible and $\sigma(u)+u+1=0$.
 This implies $u=-(\sigma(u)+1)$, thus  thus $\sigma(u^2)+\sigma(u)+\bar v cv=e$ i.e.
we get $\sigma(u^2+u)+\bar v cv=u^2+u+\bar v cv$ which yields $\sigma(u^2+u)=u^2+u$. Hence
$u^2+u\in F.$ This implies that $\bar v cv=e-(u^2+u)\in F $, a contradiction: multiply this equation with $ v$ from the
left to obtain $cv\in Fv$ and thus $c\in F$.

Hence  $v=0$ which implies $u^2+u=e$ and $X=(u,0)\in D$. Now $XY=YX+Y$ means
$uw=wu+w$ and the standard basis $1,X=u,Y=w$ for the quaternion algebra $[e,f)$ lies in $D$. We obtain
$D\cong[e,f)$.
\end{proof}

\begin{theorem}  Let  $D$ be a division algebra and $c\in D^\times$ contained in a separable quadratic field extension
$L$ of $F$. Then ${\rm Cay}(L,c)$ is the only nonassociative quaternion subalgebra of the Dickson algebra
$A={\rm Cay}(D,c)$, $A={\rm Cay}_m(D,c)$ and $A={\rm Cay}_r(D,c)$.
\\ If $c\in D^\times$ is instead contained in a purely inseparable quadratic field extension $K$
of $F$, then $A$ has no nonassociative quaternion subalgebras. If
${\rm Cay}(K',e)$ is a subalgebra of $A$, where $K'$ is a purely inseparable quadratic field extension of
$F$, then ${\rm Cay}(K',e)={\rm Cay}(K,c)$.
\end{theorem}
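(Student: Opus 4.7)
My plan is to adapt the strategy of Theorems~8 and~9 to nonassociative quaternion subalgebras, handling the three multiplications $\Cay(D,c)$, $\Cay_m(D,c)$, $\Cay_r(D,c)$ in parallel since the only difference between them is the placement of $c$ in the formula for the first coordinate of a product.

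Let $B$ be a nonassociative quaternion subalgebra of $A$. By [As-Pu, Lemma~1] one has $B\cong\Cay(L',b)$ for some quadratic \'etale $L'\subset B$ and $b\in L'\setminus F$, so in characteristic not~2 we may pick generators $X,Y$ with $X^2\in F^\times$, $Y^2=b$ and $YX=-XY$ (the characteristic-2 case uses the standard modifications). Writing $X=(u,v)\in D\oplus D$, the analysis of Theorem~8 applies verbatim: the case $v\neq 0$ forces $\bar u=-u$ and $cN_{D/F}(v)\in F$ (respectively $\bar vcv\in F$ or $\bar vvc\in F$ for $\Cay_m$ and $\Cay_r$), each of which would imply $c\in F$ since $N_{D/F}(v)\in F^\times$; this is a contradiction, so $v=0$, $X\in D$, and the nucleus $L'=F[X]$ lies in $D$. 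Now write $Y=(u,v)$ and use $Y^2=b\in L'\subset D$: the second coordinate gives $v(u+\bar u)=0$, and $v=0$ would put $Y\in D$ with $Y^2\in L'\setminus F$, forcing $Y\in L'$ by the usual quadratic-equation argument in a quaternion division algebra and contradicting $\dim B=4$. Hence $v\neq 0$, $\bar u=-u$, $u^2=-N_{D/F}(u)\in F$, and the first coordinate gives $cN_{D/F}(v)=b+N_{D/F}(u)\in L'$. Since $N_{D/F}(v)\in F^\times$ we deduce $c\in L'$, whence $L=F(c)=L'$ by dimension.

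The main obstacle is the last step, namely upgrading $L'=L$ to $B=\Cay(L,c)$ as subalgebras of $A$. Translating $YX=-XY$ through multiplication~(1) with $X\in L\setminus F$ constrains the first coordinate $u$ of $Y$ to the norm-orthogonal complement of $L$ inside $D$; to go further I would compare the scalar $b=-N_{D/F}(u)+cN_{D/F}(v)$ produced above with the value $c$ attained by the standard generator $l=(0,1)$ of $\Cay(L,c)=L\oplus Ll$, and exploit the division property of $D$ to conclude $u=0$. Then $Y\in L\oplus L$, so $B\subset\Cay(L,c)$ and equality follows by dimension; the same argument with $\bar vcv$ or $\bar vvc$ in place of $cN_{D/F}(v)$ handles $\Cay_m(D,c)$ and $\Cay_r(D,c)$.

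For the purely inseparable case, the first two steps applied to a hypothetical nonassociative quaternion subalgebra force $c$ into the separable nucleus $L'\subset D$, so $L'$ would have to equal the purely inseparable $K=F(c)$, which is impossible as $L'$ is \'etale; hence no nonassociative quaternion subalgebra exists. For the final clause, the same generator analysis applied to a subalgebra of the form $\Cay(K',e)$ with $K'$ purely inseparable still forces $c\in K'$, so $K'=F(c)=K$, and matching $Y^2$ with $e$ then identifies $\Cay(K',e)$ with $\Cay(K,c)$.
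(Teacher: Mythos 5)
Your opening reduction is correct and matches the paper: the Theorem~8/9 computation applied to the nucleus generator $X=(u,v)$ forces $v=0$, so the nucleus $L'$ of the subalgebra lies in $D$. Your second step, analysing the doubling generator $Y=(u,v)$ and extracting $v\neq 0$, $\bar u=-u$ and $cN_{D/F}(v)=b+N_{D/F}(u)$, is a genuine refinement -- the paper jumps straight from $K\subset D$ to the unproved assertion that $l=(0,1_K)=(0,1_D)$ lies in the subalgebra. Two problems, however. First, your claim that the same computation handles ${\rm Cay}_m(D,c)$ is too quick: there one gets $\bar v c v=b+N_{D/F}(u)\in L'$, which only yields $c\in vL'v^{-1}$, not $c\in L'$. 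Second, and decisively, the step you yourself flag as ``the main obstacle'' cannot be completed as you describe, because $u=0$ is \emph{not} forced. Write $D=L\oplus LJ$. For \emph{any} $u\in LJ$ and \emph{any} $v\in D^\times$ one checks that $uy=\sigma(y)u$ for all $y\in L$, that all second components in products of elements of $L+L(u,v)$ cancel, and that $(y_2Y)(y_4Y)=\bigl(cN_{D/F}(v)-N_{D/F}(u)\bigr)\sigma(y_4)y_2$; hence $B=L\oplus L(u,v)$ is a four-dimensional unital subalgebra of ${\rm Cay}(D,c)$ isomorphic to ${\rm Cay}\bigl(L,cN_{D/F}(v)-N_{D/F}(u)\bigr)$. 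Concretely, in ${\rm Cay}(\mathbb{H},i)$ the span of $(1,0)$, $(i,0)$, $(j,1)$, $(k,i)$ is closed under multiplication and is a nonassociative quaternion algebra isomorphic to ${\rm Cay}(\mathbb{C},i-1)$, which is neither equal to ${\rm Cay}(\mathbb{C},i)$ as a subset nor isomorphic to it by the criterion of [Al-H-K, Thm.~14] quoted in Remark~2.

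So the missing step is not a routine verification you postponed: the desired conclusions $u=0$ and $v\in L$ are false, and no use of the division property of $D$ will produce them. The same difficulty is present, silently, in the paper's own proof, which asserts without argument that $(0,1_D)$ belongs to the nonassociative quaternion subalgebra once its nucleus has been located inside $D$; your write-up has the merit of isolating exactly where the real content of the uniqueness claim sits, but as it stands the proposal does not establish the statement, and the subalgebras exhibited above show that the strategy of pinning the doubling generator down to $(0,1_D)$ cannot succeed. Your argument for the purely inseparable case (a nonassociative quaternion subalgebra would force $c$ into a separable quadratic subfield, or a conjugate of one, contradicting $F(c)$ purely inseparable) survives, since it only uses the part of the analysis up to $c\in vL'v^{-1}$; the final clause about ${\rm Cay}(K',e)={\rm Cay}(K,c)$ inherits the same unresolved identification problem.
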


\begin{proof} Suppose that
${\rm Cay}(K,e)$ is a nonassociative quaternion subalgebra of $A$.
\\ (i) Let $F$ have characteristic not 2 and let $K=F(\sqrt{f})$. Then there is an
element $Y=(w,z)\in A$, $w,z\in D$, such that $Y^2=f\in F^\times$.
Analogously as in the  proof of Theorem 8, resp. 9, this implies $Y=(w,0)$ and $w^2=f$, $w\in D$, so $K\subset D$
and $K\oplus K\subset D\oplus D=A$. Therefore $l=(0,1_K)=(0,1_D)\in {\rm Cay}(K,e)$ satisfies $l^2=e$ and
$l^2=c$ and we obtain $c=e\in K\setminus F$, hence $L=K$ and ${\rm Cay}(K,e)={\rm Cay}(L,c)$.
\\ (ii) Let $F$ have characteristic 2. $K$ is a separable quadratic field extension of $F.$
Hence there is an element
 $X=(w,z)\in A$, $w,z\in D$, such that $X^2+X=f\in F^\times$. As in the proof of Theorem 8, resp. 9,
 this implies $X=(w,0)$ and $w^2+w=f$, $w\in D$, so $ K\subset D$
and $L\oplus L\subset D\oplus D=A$. Therefore $l=(0,1_K)\in {\rm Cay}(K,e)$ satisfies $l^2=e$,
$l^2=c$, and again $c=e\in K\setminus F$. If $c$ lies in a purely inseparable quadratic field extension, this is a contradiction.
Thus in this case $A$ has no nonassociative quaternion division subalgebras.
The rest of the assertion follows analogously as in (i) and the proofs of Theorems 8, 9.
\end{proof}

\begin{theorem} The Cayley-Dickson doublings ${\rm Cay}(D,c)$, ${\rm Cay}_m(D,c)$ and ${\rm Cay}_r(D,c)$
 of a quaternion division algebra $D$ are division algebras for any choice of $c\in D^\times$
not in $F$.
\end{theorem}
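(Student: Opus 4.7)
The plan is to apply the criterion recalled in the preliminaries that a finite-dimensional unital $F$-algebra is a division algebra exactly when it has no zero divisors. Let $A$ denote one of the three algebras ${\rm Cay}(D,c)$, ${\rm Cay}_m(D,c)$, ${\rm Cay}_r(D,c)$. I would assume $(u,v)(u',v')=(0,0)$ for elements $(u,v),(u',v')\in A$ and deduce that one factor vanishes. Writing out the product gives two equations: in the second component, the universal identity $v'u+v\bar{u'}=0$ in all three cases; in the first component, one of $uu'+c\bar{v'}v=0$, $uu'+\bar{v'}(cv)=0$, or $uu'+(\bar{v'}v)c=0$, depending on the position of $c$.

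First I would dispose of the degenerate cases. If $v=0$, the equations reduce to $uu'=0$ and $v'u=0$, and since $D$ is a division algebra, either $u=0$ (so $(u,v)=0$) or $u\neq 0$, which forces $u'=v'=0$ and hence $(u',v')=0$. The case $v'=0$ is symmetric. This leaves the main case $v,v'\in D^\times$, where the second equation, after applying the order-reversing canonical involution $\sigma$, yields the explicit formula $u'=-\bar{u}\,\bar{v'}\,\bar{v}^{-1}$.

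The key step is to substitute this formula back into the first equation. If $u=0$, then $u'=0$ as well and what remains is one of $c\bar{v'}v=0$, $\bar{v'}(cv)=0$, $(\bar{v'}v)c=0$, each impossible since $c,v,v'\in D^\times$. If $u\neq 0$, the substitution produces an identity of the form $-N_{D/F}(u)\bar{v'}\bar{v}^{-1}+(\text{the }c\text{-term})=0$. Using that $N_{D/F}(u),N_{D/F}(v)\in F$ are central, together with $\bar{v}^{-1}v^{-1}=v^{-1}\bar{v}^{-1}=N_{D/F}(v)^{-1}\in F$, I would isolate $c$ by multiplying through by $\bar{v'}^{-1}$ on the appropriate side and by one of $v^{-1}$ or $\bar{v}^{-1}$, obtaining in each of the three cases the relation $c=N_{D/F}(u)/N_{D/F}(v)\in F$, contradicting $c\notin F$.

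The main obstacle is the bookkeeping in this last step: the three positions of $c$ require multiplying by the appropriate inverse on the left or right to cancel the $\bar{v'}$ factor, and one must respect the order-reversal of $\sigma$ throughout. Once this is handled, all three cases reduce to the same scalar identity and establish that $A$ has no zero divisors, hence is a division algebra.
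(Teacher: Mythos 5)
Your proposal is correct and follows essentially the same route as the paper's proof: both reduce to the no-zero-divisor criterion, dispose of the degenerate cases $v=0$ or $v'=0$, solve the second-component equation for $u'$ via the involution, and substitute into the first component to force $c=N_{D/F}(u)/N_{D/F}(v)\in F$, contradicting $c\notin F$. The only cosmetic difference is that the paper keeps $\sigma(v')$ as a possible zero factor until the final step (using that $D$ has no zero divisors) rather than splitting off $v'=0$ at the outset and cancelling $\overline{v'}$ by its inverse.
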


\begin{proof} We show that $A={\rm Cay}(D,c)$ has no zero divisors: suppose
$$(0,0)=(u,v)(u',v')=(uu'+c \sigma(v')v,v'u+v\sigma(u'))$$
for $u,u',v,v'\in D$. This is equivalent to
$$uu'+c \sigma(v')v=0\text{ and } v'u+v\sigma(u')=0.$$
Assume $v=0$, then $uu'=0$ and $v'u=0$. Now if also $u=0$ then $(u,v)=(0,0)$, so let $u\not=0$. Then
$u'=v'=0$ and $(u',v')=(0,0).$ So assume $v\not=0$. Since $D$ is a division algebra, $v$ is invertible and the second equation
yields $\sigma(u')=-v^{-1}v'u$, hence
$$u'=-\sigma(u)\sigma(v')\sigma(v^{-1})=-\frac{1}{N_{D/F}(v)}\sigma(u)\sigma(v')v.$$
Substituting this into the first equation gives
$$-\frac{1}{N_{D/F}(v)}u\sigma(u)\sigma(v')v+c\sigma(v')v=(c\sigma(v')-\frac{N_{D/F}(u)}{N_{D/F}(v)}\sigma(v'))v=0.$$
 Since $v\in D^\times$ it follows that
$c\sigma(v')-\frac{N_{D/F}(u)}{N_{D/F}(v)}\sigma(v')=(c-N_{D/F}(\frac{u}{v}))\sigma(v')=0$. $D$
 is a division algebra, so either
$c=N_{D/F}(\frac{u}{v})$ or $v'=0$. If $v'=0$ then $v\sigma(u')=0$ implies $u'=0$ and $(u',v')=(0,0).$ If $v'\not=0$ then
$N_{D/F}(\frac{u}{v})=c\in K\setminus F$ which is a contradiction as $N_{D/F}(\frac{u}{v})\in F$.
\\\\ We next show that $A={\rm Cay}_m(D,c)$ has no zero divisors: suppose
$$(0,0)=(u,v)(u',v')=(uu'+ \sigma(v')cv,v'u+v\sigma(u'))$$
for $u,u',v,v'\in D$. This is equivalent to
$$uu'+ \sigma(v')cv=0\text{ and } v'u+v\sigma(u')=0.$$
Assume $v=0$, then $uu'=0$ and $v'u=0$. Now if also $u=0$ then $(u,v)=(0,0)$, so let $u\not=0$. Then
$u'=v'=0$ and $(u',v')=(0,0).$ So assume $v\not=0$. Since $D$ is a division algebra, $v$ is invertible and the second equation
yields $\sigma(u')=-v^{-1}v'u$, hence
$$u'=-\sigma(u)\sigma(v')\sigma(v^{-1})=-\frac{1}{N_{D/F}(v)}\sigma(u)\sigma(v')v.$$
Substituting this into the first equation gives
$$-\frac{1}{N_{D/F}(v)}N_{D/F}(u)\sigma(v')v+\sigma(v')cv=\sigma(v')(c-\frac{N_{D/F}(u)}{N_{D/F}(v)}))v=0.$$
 Since $v\in D^\times$ it follows that
$\sigma(v')c-\frac{N_{D/F}(u)}{N_{D/F}(v)}\sigma(v')=\sigma(v')(c-N_{D/F}(\frac{u}{v}))=0$. $D$
 is a division algebra, so either
$c=N_{D/F}(\frac{u}{v})$ or $v'=0$. If $v'=0$ then $v\sigma(u')=0$ implies $u'=0$ and $(u',v')=(0,0).$ If $v'\not=0$ then
$N_{D/F}(\frac{u}{v})=c\in K\setminus F$ which is a contradiction as $N_{D/F}(\frac{u}{v})\in F$.

The proof that $A={\rm Cay}_r(D,c)$ has no zero divisors is analogous to the one for $A={\rm Cay}(D,c)$.
\end{proof}

\section{Isomorphisms}

\subsection{} Let $B$ and $D$ be two quaternion algebras over $F$ and $g:D\to B$ an algebra isomorphism.
Let $m\in F^\times$. For $u,v\in D$,
$$G:D\oplus D\to D\oplus D, \quad G(u,v)=(g(u),m^{-1} g(v)) $$
defines the following algebra isomorphisms:
$${\rm Cay}(D,c)\cong {\rm Cay}(B, m^2 g(c)),$$
$${\rm Cay}_m(D,c)\cong {\rm Cay}_m(B, m^2 g(c)),  $$
and
$${\rm Cay}_r(D,c)\cong {\rm Cay}_r(B, m^2g(c)) .$$

\begin{example} For ${\rm char}\,F\not=2$ and $c=c_0+c_1i+c_2j+c_3k\in D^\times\setminus F$,
$${\rm Cay}((a,b)_F,c)\cong {\rm Cay}((e^2a,f^2b)_F,c_0+ec_1i+fc_2j+efc_3k),$$
since $(a,b)_F\cong (e^2a,f^2b)_F$ via $g(i)=ei$, $g(j)=fj$ for $e,f\in F^\times$
and
$${\rm Cay}((a,b)_F,c)\cong {\rm Cay}((b,a)_F,c_0+abc_2i+abc_1j+a^2b^2c_3k),$$
since $(a,b)_F\cong (b,a)_F$ via $g(i)=abj$, $g(j)=abi$.
 Analogously,
$${\rm Cay}_m((a,b)_F,c)\cong {\rm Cay}_m((e^2a,f^2b)_F,c_0+ec_1i+fc_2j+efc_3k),$$
$${\rm Cay}_r((a,b)_F,c)\cong {\rm Cay}_r((e^2a,f^2b)_F,c_0+ec_1i+fc_2j+efc_3k),$$
for all $e,f\in F^\times$ and
$${\rm Cay}_m((a,b)_F,c)\cong {\rm Cay}_m((b,a)_F,c_0+abc_2i+abc_1j+a^2b^2c_3k),$$
$${\rm Cay}_r((a,b)_F,c)\cong {\rm Cay}_r((b,a)_F,c_0+abc_2i+abc_1j+a^2b^2c_3k).$$
For ${\rm char}\,F=2$, $D={\rm Cay}(L,b)=[a,b)$ for some separable quadratic field extension
$L/F$, $L=F(\sqrt{a})$, and $c=m+nj\in D={\rm Cay}(L,b)$ invertible,  $m,n\in L$,
$${\rm Cay}(D,m+nj)\cong {\rm Cay}(D,m+fnj),$$
since $D={\rm Cay}(L,b)\cong {\rm Cay}(L,f^2b)$ via $g(w,z)=(w,fz)$ for $f\in F^\times$.
Analogously,
$${\rm Cay}_m(D,m+nj)\cong {\rm Cay}_m(D,m+fnj),$$
$${\rm Cay}_r(D,m+nj)\cong {\rm Cay}_r(D,m+fnj)$$
for every $f\in F^\times$.
\end{example}

\begin{theorem} Let $B$ and $D$ be two quaternion division algebras over $F$,
$c\in D^\times\setminus F$ and $d\in B^\times\setminus F$, and suppose one of the following:
\\ (i)  ${\rm Cay}(D,c)\cong {\rm Cay}(B,d)$.
\\ (ii)  ${\rm Cay}_m(D,c)\cong {\rm Cay}_m(B,d)$.
\\ (ii) ${\rm Cay}_r(D,c)\cong {\rm Cay}_r(B,d)$.
\\
Then the quaternion algebras $D$ and $B$ are isomorphic and
 either both $c$ and $d$ lie in a separable quadratic field extension $L$ or they both lie in
a purely inseparable quadratic field extension $K$ of $F$, with $L$ (resp. $K$) contained in $D$ and $B$.
If $c,d\in L$ then either  $c=x\bar x d$ or $\bar c=x\bar x d$ for some $x\in L^\times$.
If $c,d\in K$ then ${\rm Cay}(K,c)\cong {\rm Cay}(K,d)$.
\end{theorem}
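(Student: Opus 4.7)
The plan is to combine the uniqueness statements for (non)associative quaternion subalgebras established in Theorems 8, 9 and 10 with Waterhouse's classification of nonassociative quaternion algebras recorded in Remark 2. Let $f$ denote the given algebra isomorphism between, say, ${\rm Cay}(D,c)$ and ${\rm Cay}(B,d)$; the arguments for the middle and right variants are essentially identical, with Theorem 8 replaced by Theorem 9 (while Theorem 10 already covers all three multiplications).

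First I would show $D\cong B$. By Theorem 8, $D$ is the only quaternion subalgebra of ${\rm Cay}(D,c)$ and $B$ is the only quaternion subalgebra of ${\rm Cay}(B,d)$. The image $f(D)$ is a quaternion subalgebra of ${\rm Cay}(B,d)$, so uniqueness forces $f(D)=B$, and hence $f|_D:D\to B$ is an isomorphism of quaternion algebras.

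Next I would pin down where $c$ and $d$ live. In characteristic different from $2$, every quadratic subfield of a quaternion division algebra is separable, so by Remark 5 one may choose separable quadratic subfields $L\subset D$ and $L'\subset B$ with $c\in L$ and $d\in L'$. By Theorem 10, ${\rm Cay}(L,c)$ is the unique nonassociative quaternion subalgebra of ${\rm Cay}(D,c)$ and ${\rm Cay}(L',d)$ the unique one of ${\rm Cay}(B,d)$; hence $f$ restricts to an isomorphism ${\rm Cay}(L,c)\cong{\rm Cay}(L',d)$. Comparing nuclei gives $f(L)=L'$, and after identifying $L$ with $L'$ via $f|_D$, Remark 2 produces $g\in{\rm Aut}(L)$ and $x\in L^{\times}$ with $g(c)=N_{L/F}(x)\,d$. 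Since ${\rm Aut}(L)=\{{\rm id},\sigma\}$, this reads $c=x\bar{x}\,d$ or $\bar{c}=x\bar{x}\,d$. In characteristic $2$ one separates the case $c\in L$ (separable quadratic) from the case $c\in K$ (purely inseparable quadratic) using Remark 5. The separable subcase proceeds exactly as above. In the inseparable subcase the subalgebras ${\rm Cay}(K,c)\subset{\rm Cay}(D,c)$ and ${\rm Cay}(K',d)\subset{\rm Cay}(B,d)$ are, by Theorem 10, the unique subalgebras of the form ${\rm Cay}(K'',e)$ with $K''$ a purely inseparable quadratic field extension of $F$; hence $f$ must pair them up, giving $f(K)=K'$ and ${\rm Cay}(K,c)\cong{\rm Cay}(K,d)$.

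The main obstacle, and the place where Theorem 10 really earns its keep, is excluding the mixed case in characteristic $2$ in which $c$ lies in a separable quadratic extension while $d$ lies in a purely inseparable one (or vice versa). For such a pair, ${\rm Cay}(D,c)$ contains the nonassociative quaternion division subalgebra ${\rm Cay}(L,c)$, whereas ${\rm Cay}(B,d)$ contains no nonassociative quaternion subalgebra at all, so no such $f$ can exist. The only other delicate point is that whenever $f$ is restricted to a canonical subalgebra, the identification of the distinguished subfield must be transported through $f|_D:D\to B$; this is automatic because any isomorphism of nonassociative quaternion algebras carries nucleus to nucleus.
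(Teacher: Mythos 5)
Your proposal is correct and follows essentially the same route as the paper: uniqueness of the quaternion subalgebra (Theorems 8, 9) gives $D\cong B$, uniqueness of the (non)associative quaternion subalgebra (Theorem 10) separates the separable and purely inseparable cases and rules out the mixed one, and Waterhouse's classification then yields $c=x\bar x d$ or $\bar c = x\bar x d$. The only difference is that you spell out a few steps the paper leaves implicit (e.g.\ the impossibility of the mixed case in characteristic $2$), which is harmless.
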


\begin{proof}  Any isomorphism maps a quaternion subalgebra of ${\rm Cay}(D,c)$ to a quaternion subalgebra of
${\rm Cay}(B,d)$, hence $D\cong B$ in (i) by Theorem 8. The same argument applies for (ii) and (iii), employing Theorem 9.

By Theorem 10, both $c\in D^\times$ and $d\in B^\times$ either both lie in a separable quadratic field extension
contained in $D$ and $B$ (in which case both algebras have a nonassociative quaternion subalgebra) or
both lie in a purely inseparable quadratic field extension
contained in $D$ and $B$ (in which case both algebras have no nonassociative quaternion subalgebra).
\\ Let us look at (i). Suppose both $c\in D^\times$ and $d\in B^\times$  lie in a separable quadratic field extension $L\subset D$ resp. $L'
\subset B$. Then the isomorphism implies that ${\rm Cay}(L,c)\cong {\rm Cay}(L',d)$ (Theorem 10) and thus $L=L'$ and the assertion follows
applying [W, Theorem 2].
If  both $c\in D^\times$ and $d\in B^\times$  lie in a purely inseparable quadratic field extension $K\subset D$ resp. $K'
\subset B$ then the isomorphism implies that ${\rm Cay}(K,c)\cong {\rm Cay}(K',d)$ by Theorem 10.
Since isomorphic algebras have the same nucleus, this implies $K=K'$. The proof is the same for the
other two cases.
\end{proof}

Let $g:D\to D$ be an algebra isomorphism and $m\in F^\times$. By the Theorem of Skolem-Noether [KMRT, (1.4), p.~4],
$g(u)=aua^{-1}$ for a suitable $a\in D^\times$, therefore for $u,v\in D$, the map
$$(u,v)\to (aua^{-1},m^{-1}ava^{-1}) $$
 induces the following algebra isomorphisms:
$${\rm Cay}(D,c)\cong {\rm Cay}(D,m^{2}aca^{-1}),  $$
$${\rm Cay}_m(D,c)\cong {\rm Cay}_m(D,m^{2}aca^{-1}), $$
$${\rm Cay}_r(D,c)\cong {\rm Cay}_r(D,m^{2}aca^{-1}). $$

\begin{theorem} Let  $G:{\rm Cay}(D,c)\to {\rm Cay}(D,d)$ be an algebra isomorphism of two Dickson division algebras
and suppose that both $c$ and $d$ lie in a separable quadratic field extension $L\subset D$.
Write $D={\rm Cay}(L,h)=L\oplus LJ$ with $J^2=h$.
Then $dN_{L/F}(z)=c$ for some $z\in L^\times$ and either there is an invertible $a\in L$ or
$dN_{L/F}(z)=\bar c$ for some $z\in L^\times$ and there is an invertible $a\in LJ$
 such that
$$G((u,v))=(aua^{-1},zava^{-1}).$$
 All $a, z\in L^\times$ yield an isomorphism
$G:{\rm Cay}(D,c)\to {\rm Cay}(D,N_{L/F}(z^{-1})c)$ and all invertible $a\in LJ$, $z\in L^\times$ yield an isomorphism
$G:{\rm Cay}(D,c)\to {\rm Cay}(D,N_{L/F}(z^{-1})\bar c)$.
\end{theorem}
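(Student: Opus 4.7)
The plan is to determine $G$ by restricting it in turn to the two canonical subalgebras of $\Cay(D,c)$ whose uniqueness has already been established: the quaternion subalgebra $D$ (Theorem~8) and the nonassociative quaternion subalgebra $\Cay(L,c)$ (Theorem~10).

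First, $G$ sends the unique quaternion subalgebra $D$ of the domain onto the unique quaternion subalgebra $D$ of the codomain, so $G|_D$ is an $F$-automorphism of $D$; by Skolem--Noether, $G|_D(u) = aua^{-1}$ for some $a \in D^\times$. Similarly $G$ maps $\Cay(L,c)$ onto $\Cay(L,d)$, and the induced automorphism $G|_L$ of the common nucleus $L$ is either the identity or the Galois involution $x \mapsto \bar x$. Writing $a = a_1 + a_2 J$ and using $JxJ^{-1} = \bar x$ together with the commutativity of $L$, the identity $axa^{-1} = G(x)$ for $x \in L$ forces $a \in L^\times$ in the first case (the centralizer of $L$ in $D$ is $L$) and $a \in LJ$ in the second (any ``mixed'' $a$ with both $a_1,a_2 \ne 0$ is ruled out because $a_1(x-\bar x) = 0$ would then force $x = \bar x$ for all $x \in L$, absurd). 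This produces the two cases (a) and (b) and is the main structural step: the dichotomy on $a$ is exactly what Theorem~10 buys us via the preservation of $\Cay(L,c)$.

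Next, the distinguished element $\ell = (0,1) \in \Cay(L,c)$ satisfies $\ell^2 = c$, and since $G$ maps $\Cay(L,c) = L \oplus L\ell$ onto $\Cay(L,d) = L \oplus L\ell'$ while sending the nucleus $L$ to the nucleus $L$, the non-nucleus part goes to the non-nucleus part: $G(\ell) = (0,z)$ for some $z \in L^\times$. Expanding $G(\ell)^2 = G(c) = aca^{-1}$ via multiplication rule (1) in $\Cay(D,d)$ yields $d\,N_{L/F}(z) = aca^{-1}$, which equals $c$ in case (a) and $\bar c$ in case (b). The global formula follows by $F$-linearity: since $(u,v) = u + v\ell$, we obtain $G((u,v)) = (aua^{-1},0) + (ava^{-1},0)(0,z) = (aua^{-1},\,z\cdot ava^{-1})$, using the identity $(x,0)(0,z) = (0,zx)$ from rule (1) in $\Cay(D,d)$.

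For the converse, given either $a \in L^\times$ and $z \in L^\times$ or $a \in LJ$ and $z \in L^\times$, one verifies by direct computation that $(u,v) \mapsto (aua^{-1}, zava^{-1})$ preserves products. The calculation reduces to three identities: $aca^{-1} = c$ or $\bar c$ according to the case, $z\bar z = N_{L/F}(z) \in F$ (hence central in $D$), and $\overline{ava^{-1}} = a\bar v a^{-1}$ (a consequence of $a\sigma(a) \in F$, which makes conjugation by $a$ commute with $\sigma$). These collapse the cross-terms in the product $(u_1,v_1)(u_2,v_2)$ into the form required by multiplication rule (1) with parameter $d\,N_{L/F}(z)$, giving the target algebras $\Cay(D, N_{L/F}(z^{-1})c)$ and $\Cay(D, N_{L/F}(z^{-1})\bar c)$ respectively.
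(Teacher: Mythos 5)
Your proposal is correct and follows essentially the same route as the paper: restrict $G$ to the unique quaternion subalgebra $D$ (Skolem--Noether gives $a$) and to the unique nonassociative quaternion subalgebra $\Cay(L,c)$ (Waterhouse's classification gives $z$ and the dichotomy $G|_L=\mathrm{id}$ or $\sigma$, which forces $a\in L$ or $a\in LJ$ by the same centralizer computation with $a=a_1+a_2J$), then compute $G(\ell)^2=G(c)=aca^{-1}$ and verify multiplicativity for the converse. The only cosmetic difference is the order of the two restriction steps, which does not affect the argument.
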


\begin{proof} Restricting $G$ to the nonassociative quaternion subalgebra ${\rm Cay}(L,c)$ yields an isomorphism
$G_0:{\rm Cay}(L,c)\to {\rm Cay}(L,d)$ (Theorem 10). By [W, Thm. 2], either $G_0(u,v)=(u,vz)$ and $c=z\bar z d$ or
$G_0(u,v)=(\bar u,\bar v z)$ and $\bar c=z\bar z d$ for some $z\in L^\times$, $u,v\in L^\times$. In particular,
$G(l)=G((0,1_D))=G_0((0,1_L))=(0,z)=(z,0)(0,1_D)=zl'$, hence the subspace $Dl$ of ${\rm Cay}(D,c)=D\oplus Dl$ is mapped to the
subspace $Dl'$ of ${\rm Cay}(D,d)=D\oplus Dl'$. Then $G(c)=G(l^2)=G(l)G(l)=(zl')(zl')=
(0,z)(0,z)=(d\sigma(z)z,0)$. Since $G$ restricted to $D$ is an automorphism of $D$, there exists an element $a\in D^\times$
such that $G(u)=aua^{-1}$ for all $u\in D$. Thus $G(c)=aca^{-1}$ and $d\sigma(z)z=aca^{-1}$ which means
$$dN_{L/F}(z)=aca^{-1}.$$
Since $G$ is multiplicative we have
$$G((0,v))= G((v,0)(0,1))=G((v,0))G((0,1))=(ava^{-1})(0,z)=(0,zava^{-1} ),$$
 so that $G((u,v))=(aua^{-1},z ava^{-1})$.
 Moreover, for all $u,v,u',v'\in D$,
 $$G((u,v)(u',v'))=G(u,v)G(u',v')$$
  is equivalent to
  \begin{enumerate}
\item $ac\sigma(v')va^{-1}=d \sigma(a^{-1})\sigma(v')va^{-1} N(a)N(z),$
and
\item $zav\sigma(u')a^{-1}=zav\sigma(u')\sigma(a) N(a)^{-1}$
\end{enumerate}
for all $u,u',v,v'\in D$.
Using that $dN_{D/F}(z)=aca^{-1}$, (1) and (2) are true for all $u,u',v,v'\in D$.

Now $G((u,v))=(aua^{-1},zava^{-1})$
restricted to ${\rm Cay}(L,c)$ yields the isomorphism $G_0(u,v)=(u,zv)$  or
$G_0(u,v)=(\bar u,\bar v z)$ for all $u,v\in L$. This implies either
$aua^{-1}=u$ for all $u\in L$ or $aua^{-1}=\bar u$ for all $u\in L$. Thus either $au=ua$ for all $u\in L$,
 or $au=\bar ua$ for all $u\in L$.
\\
We show that $au=ua$ for all $u\in L$ implies $a\in L$: We have $D={\rm Cay}(L,h)$, $a=n+kJ$, $n,k\in L$ $J^2=h$, $mJ=J\bar m$ for all
$m\in L$. $au=nu+kJu=nu+k\bar uJ$ and $ua=un+ukJ$ so that $au=ua$ for all $u\in L$ is equivalent to
$k\bar u= uk$ for all $u\in L$, i.e. to $k=0$ (plug in any $u$ such that $\bar u=-u$, get $k=-k$), i.e to $a\in L$.
\\
And $au=\bar ua$ for all $u\in L$ implies $a\in LJ$:
$au=nu+kJu=nu+k\bar uJ$ and $\bar ua=\bar un+\bar ukJ$ so that
$au=\bar ua$ for all $u\in L$  is equivalent to $n u=\bar u n$ for all $u\in L$, i.e. to $n=0$.
The rest of the assertion is now obvious.
\end{proof}

A similar proof can be used to show:

\begin{theorem}
(a) Let  $G:{\rm Cay}_m(D,c)\to {\rm Cay}_m(D,d)$ be an algebra isomorphism of two Dickson division algebras
and suppose that both $c$ and $d$ lie in a separable quadratic field extension $L\subset D$.
Then $dN_{L/F}(z)=c$ for some $ z\in L^\times$ and
$$G((u,v))=(aua^{-1},zava^{-1})$$
for some $a\in L^\times$.
\\ All $a, z\in L^\times$ yield an isomorphism
$G:{\rm Cay}(D,c)\to {\rm Cay}(D,N_{L/F}(z^{-1})c)$.
\\ (b) Let  $G:{\rm Cay}_r(D,c)\to {\rm Cay}_r(D,d)$ be an algebra isomorphism of two Dickson division algebras
and suppose that both $c$ and $d$ lie in a separable quadratic field extension $L\subset D$.
Write $D={\rm Cay}(L,h)=L\oplus LJ$ with $J^2=h$. Then $dN_{L/F}(z)=c$ for some $a,z\in L^\times$ or
$dN_{L/F}(z)=\bar c$ for some $z\in L^\times$ and invertible $a\in LJ$
 and
$$G((u,v))=(aua^{-1},zava^{-1}).$$
 All $a, z\in L^\times$ yield an isomorphism
$G:{\rm Cay}(D,c)\to {\rm Cay}(D,N_{L/F}(z^{-1})c)$ and all invertible $a\in LJ$, $z\in L^\times$ yield an isomorphism
$G:{\rm Cay}(D,c)\to {\rm Cay}(D,N_{L/F}(z^{-1})\bar c)$.
\end{theorem}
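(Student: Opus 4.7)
The strategy parallels the proof of Theorem 14 step by step. First, I would restrict $G$ to the unique nonassociative quaternion subalgebra on each side: since $c\in L$ is separable, Theorem 10 tells us that ${\rm Cay}(L,c)$ is the only nonassociative quaternion subalgebra of ${\rm Cay}_m(D,c)$ (respectively ${\rm Cay}_r(D,c)$), and ${\rm Cay}(L,d)$ is the only one inside ${\rm Cay}_m(D,d)$ (respectively ${\rm Cay}_r(D,d)$). Hence $G$ restricts to an isomorphism $G_0:{\rm Cay}(L,c)\to{\rm Cay}(L,d)$, which by [W, Thm.~2] has the form $G_0(u,v)=(u,vz)$ with $c=N_{L/F}(z)d$, or $G_0(u,v)=(\bar u,\bar v z)$ with $\bar c=N_{L/F}(z)d$, for some $z\in L^\times$.

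Second, $G$ restricts to an automorphism of the quaternion subalgebra $D$, so by Skolem-Noether $G|_D(u)=aua^{-1}$ for some $a\in D^\times$. Writing $D={\rm Cay}(L,h)=L\oplus LJ$, the two Waterhouse alternatives correspond to $G|_L$ being the identity on $L$ or the nontrivial $F$-automorphism of $L$; since the normalizer of $L^\times$ in $D^\times$ is $L^\times\cup L^\times J$, these two possibilities force $a\in L^\times$ or $a\in L^\times J$ respectively.

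Third, $G((0,1))$ must lie in the unique nonassociative quaternion subalgebra ${\rm Cay}(L,d)$ of the target, so it equals $G_0((0,1))=(0,z)$. Using the identity $(0,v)=(v,0)(0,1)$ together with multiplicativity of $G$ yields
$$G((u,v))=(aua^{-1},\,zava^{-1}).$$
Evaluating multiplicativity on $(0,1)^2=(c,0)$ then gives $aca^{-1}=\bar z d z=N_{L/F}(z)\,d$, so $dN_{L/F}(z)=c$ when $a\in L^\times$ (then $aca^{-1}=c$) and $dN_{L/F}(z)=\bar c$ when $a\in L^\times J$ (then $aca^{-1}=\bar c$, using $Jm=\bar m J$). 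For part~(b) both cases yield genuine isomorphisms, reproducing the Cay case of Theorem 14; for part~(a), a check of the remaining multiplicative identities on products of the form $(0,v)(0,v')$ rules out the $a\in L^\times J$ alternative, leaving only $a\in L^\times$.

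The main technical obstacle is the full-multiplicativity verification, which for ${\rm Cay}_m(D,c)$ reduces to the identity
$$a\bar v'cv\,a^{-1}=\bar a^{-1}\bar v'\bar a\,N_{L/F}(z)\,d\,ava^{-1}\qquad\text{for all }v,v'\in D,$$
with an analogous identity for ${\rm Cay}_r$. Using $\bar a a=N_{D/F}(a)\in F$ and the relation $aca^{-1}=N_{L/F}(z)d$, these identities collapse after carefully commuting the scalars $z,\bar z,d\in L$ past the quaternion variables. The converse direction, that every admissible pair $(a,z)$ yields an isomorphism of the stated form, is then a routine verification of the same identities.
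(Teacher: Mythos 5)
Your outline follows the paper's intended route (the paper itself only records that the proof is ``similar'' to that of Theorem 14): restrict $G$ to the unique nonassociative quaternion subalgebra and apply Waterhouse's theorem, use Skolem--Noether on the unique quaternion subalgebra $D$, derive $G((u,v))=(aua^{-1},zava^{-1})$ with $aca^{-1}=N_{L/F}(z)d$ and $a\in L^\times$ or $a\in L^\times J$, and then decide which alternative survives the full multiplicativity check. Up to that last step your argument matches the paper's and is fine.

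The gap is exactly at the one point where part (a) must differ from the Theorem 14 template: you assert that checking multiplicativity on products $(0,v)(0,v')$ ``rules out the $a\in L^\times J$ alternative,'' but you do not perform the check, and when one performs it the alternative is \emph{not} ruled out. For ${\rm Cay}_m$ the only nontrivial multiplicativity condition is
$$a\,\overline{v'}\,c\,v\,a^{-1}=\overline{a^{-1}}\,\overline{v'}\,\bar a\,N_{L/F}(z)\,d\,a\,v\,a^{-1}\qquad(v,v'\in D).$$
If $a\in L^\times J$ then $\bar a=-a$, $\overline{a^{-1}}=-a^{-1}$, $a^2=-N_{D/F}(a)$, $a\bar c=ca$, and the constraint from $G(l)^2=G(l^2)$ gives $N_{L/F}(z)d=aca^{-1}=\bar c$; substituting, the right-hand side becomes $a^{-1}\overline{v'}(a\bar c a)va^{-1}=-N_{D/F}(a)\,a^{-1}\overline{v'}cva^{-1}=a\overline{v'}cva^{-1}$, so the identity holds. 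Concretely, $G((u,v))=(juj^{-1},jvj^{-1})$ satisfies all the multiplicativity conditions as a map ${\rm Cay}_m(\mathbb{H},i)\to{\rm Cay}_m(\mathbb{H},-i)$ (e.g.\ $(0,1)^2=(i,0)\mapsto(-i,0)=G((0,1))^2$ in the target), which is precisely the case you claim to exclude, and which is not of the form $dN_{L/F}(z)=c$ with $a\in L^\times$ since $N_{\mathbb{C}/\mathbb{R}}$ is positive. So either a genuinely different argument is needed to eliminate $a\in L^\times J$ in (a), or the computation above indicates that (a) in fact admits the second alternative $dN_{L/F}(z)=\bar c$ with $a\in L^\times J$ just as (b) and Theorem 14 do and the statement itself needs amending; in either event the step you lean on does not close the case, and it is the only step that cannot simply be copied from Theorem 14.
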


\begin{corollary}
Let $D$ be a division algebra over $F$ and $A = \Cay (D,c)$ or $A = \Cay_r(D,c)$.
Suppose that $c$ lies in a separable quadratic field extension $L\subset D$ of $F$.
Let $\Psi: {\rm Aut}(A)\to {\rm Aut}(D),$ $ F\mapsto F|_D$.
\\ (i) The maps $G:A\to A$, $G((u, v)) = (aua^{-1},z ava^{-1} )$ for all $a\in L^\times$ and all
$z\in L^\times$ with $N_{L/F}(z)=1$ are the only isomorphisms
of $A$ unless $c=-\bar c$. In that case also the maps
 $G((u, v)) = (aua^{-1},z ava^{-1} )$  are isomorphisms of $A$ for all invertible $a\in LJ$
 and all $z\in L^\times$ with $N_{L/F}(z)=-1$.
\\ (ii) ${\rm Aut}(A)/{\rm ker} (\Psi)$ is a
normal subgroup of ${\rm Aut}(D)$.
If $\bar c\not=-c$, then $G\in {\rm ker} (\Psi)$ iff $G((u,v))=(u,N_{L/F}(z)v)$
 with $z\in L^\times$ such that $N_{L/F}(z)=1$.
If $\bar c=-c$, then  $G\in {\rm ker} (\Psi)$ iff $G((u,v))=(u,N_{L/F}(z)v)$
 with $z\in L^\times$ such that $N_{L/F}(z)=\pm 1$.

Hence if $i\in F$ and $\bar c=-c$, also $G_i, G_{-i}\in {\rm ker} (\Psi)$, with $G_{i}((u,v))=(u,iv)$
and $G_{-i}((u,v))=(u,-iv)$.
\\ (iii) The map $\Psi: {\rm Aut}(A)\to {\rm Aut}(D),$ $ F\mapsto F|_D$ is surjective if and only if
every separable quadratic field extension contained in $D$ is isomorphic to $L$.
\end{corollary}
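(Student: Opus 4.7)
The plan is to derive all three parts of the corollary from the isomorphism classifications given in Theorems 14 and 16(b), specialised to the diagonal case $d=c$.

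For part (i) I would feed $d=c$ into Theorem 14 (for $A=\Cay(D,c)$) and Theorem 16(b) (for $A=\Cay_r(D,c)$). The first alternative there (with $a\in L^\times$) collapses to the single condition $N_{L/F}(z)=1$, producing the unconditional family of automorphisms. The second alternative (with invertible $a\in LJ$) requires $\bar c=N_{L/F}(z)\,c$; since $N_{L/F}(z)\in F^\times$ whereas $c\in L\setminus F$, the quotient $\bar c/c$ lies in $F^\times$ only when $c$ has zero trace, i.e.\ $\bar c=-c$. Under that extra assumption the condition reduces to $N_{L/F}(z)=-1$, producing the second listed family.

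For part (ii) normality of $\ker(\Psi)$ inside $\mathrm{Aut}(A)$ is automatic, being the kernel of a group homomorphism. To characterise its elements I would impose $G|_D=\mathrm{id}_D$ in the formula $u\mapsto aua^{-1}$ from (i); centrality of $F$ in the quaternion algebra $D$ then forces $a\in F^\times$, and the formula collapses to $G((u,v))=(u,zv)$. Because $F\cap LJ=\{0\}$, this collapse is only compatible with the first family, and the constraint $N_{L/F}(z)=1$ persists. In the special case $\bar c=-c$ I would then use the second family from (i) to produce the additional kernel elements of norm $-1$, by tracking how its generators (with $a\in LJ$) interact with those of the first family to yield maps acting trivially on $D$ but nontrivially on the second summand.

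For part (iii) the image of $\Psi$ is, by (i), the set of inner automorphisms $I_a$ of $D$ with $a\in L^\times$, together with $a\in(LJ)^\times$ when $\bar c=-c$. In either case $a$ lies in the normaliser $N_{D^\times}(L^\times)=L^\times\sqcup (LJ)^\times$ of the subfield $L$, so the image of $\Psi$ consists precisely of those inner automorphisms of $D$ that stabilise $L$ setwise. By Skolem-Noether every $F$-algebra automorphism of $D$ is inner, so surjectivity of $\Psi$ is equivalent to every inner automorphism of $D$ stabilising $L$. A second application of Skolem-Noether, now to $F$-isomorphisms between separable quadratic subalgebras of $D$, translates that condition into the assertion that every separable quadratic subfield of $D$ is $F$-isomorphic to $L$, giving the stated equivalence. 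The main obstacle I anticipate is the bookkeeping in part (ii) when $\bar c=-c$: reconciling the requirement $a\in F^\times$ coming from $G|_D=\mathrm{id}_D$ with the extra $N_{L/F}(z)=-1$ solutions recorded in the statement demands a careful pass through the interaction between the two families exhibited in (i).
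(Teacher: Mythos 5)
Your handling of part (i) and of the generic case of part (ii) coincides with the paper's: it too specialises Theorems~14 and 15(b) to $d=c$, gets $N_{L/F}(z)=1$ for $a\in L^\times$, and for invertible $a\in LJ$ reduces $cN_{L/F}(z)=\bar c$ to $N_{L/F}(z)=-1$ together with $\bar c=-c$ (the paper compares coefficients of $c=m+nX$; your norm argument $\lambda^2=1$, $\lambda\neq 1$ is an equivalent shortcut). The first real problem is the sub-case $\bar c=-c$ of part (ii), which you yourself flag as the main obstacle: your plan to produce kernel elements with $N_{L/F}(z)=-1$ by letting the $a\in LJ$ generators ``interact'' with the first family cannot succeed. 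A kernel element satisfies $aua^{-1}=u$ for all $u\in D$, forcing $a\in F^\times\subset L^\times$, and then the compatibility condition $cN_{L/F}(z)=aca^{-1}=c$ from Theorem~14 forces $N_{L/F}(z)=1$; moreover the composite of two second-family automorphisms has parameters $a_1a_2\in (LJ)(LJ)\subset L$ and $z_1\sigma(z_2)$ of norm $(-1)(-1)=1$, so it lands back in the first family. The norm $-1$ kernel elements asserted in the statement are simply not reachable this way (the paper's proof offers no detail beyond ``Theorems 14, 15 now imply the remaining assertion,'' so do not expect to rescue this step by bookkeeping).

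Part (iii) is where you genuinely diverge from the paper, and your chain of equivalences breaks at its last link. Having correctly identified the image of $\Psi$ as $\{I_a : a\in L^\times\}$ (plus $a\in (LJ)^\times$ when $\bar c=-c$), i.e.\ inner automorphisms by elements of the normaliser $N_{D^\times}(L^\times)=L^\times\cup (LJ)^\times$, you conclude that surjectivity of $\Psi$ is equivalent to every inner automorphism of $D$ stabilising $L$ setwise, and then assert that Skolem--Noether converts this into ``every separable quadratic subfield of $D$ is isomorphic to $L$.'' These two conditions are not equivalent: the first says $D^\times=L^\times\cup (LJ)^\times$, which fails for every quaternion division algebra (e.g.\ $1+J$ lies in neither piece, so $I_{1+J}$ does not stabilise $L$), whereas the second can certainly hold (every separable quadratic subfield of $\mathbb{H}$ is isomorphic to $\mathbb{C}$). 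Skolem--Noether gives that subfields isomorphic to $L$ are \emph{conjugate} to $L$, not that every conjugate of $L$ equals $L$. The paper argues elementwise instead: for surjectivity it writes $f=I_b$, places $b$ in a separable quadratic subfield $K$, and invokes the hypothesis to identify $K$ with $L$ so that $G((u,v))=(bub^{-1},bvb^{-1})$ lifts $f$; for the converse it takes $b$ in a subfield $K\not\cong L$ and checks that $I_b=I_a$ is impossible for $a\in L$ (and for $a\in LJ$ when $\bar c=-c$). You would need to reproduce that argument rather than rely on the normaliser computation; note also that the paper's own passage from ``$K\cong L$'' to ``$K=L$'' is a step your write-up would still have to justify.
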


For instance, for all $a\in L^\times$ the maps $G:A\to A$, $G((u, v)) = (aua^{-1}, \pm ava^{-1} )$  are isomorphisms
of $A$ and if $c=-\bar c$ and $i\in F$ also the maps
 $G((u, v)) = (aua^{-1}, \pm i ava^{-1} )$  are isomorphisms of $A$, now for all invertible $a\in LJ$.

\begin{proof} (i) Write $D=\Cay (L,h)=L\oplus LJ$. $G:A\to A$ is an isomorphism if and only if there are elements
 $z\in L^\times$ and $a\in D^\times$ with either $a\in L$ or $a\in LJ$, such that
  $G((u, v)) = (aua^{-1}, zava^{-1})$ and $cN_{L/F}(z)=aca^{-1}$ (Theorems 14 and 15).
If $a\in L$ then $N_{L/F}(z)=1$.

If $a\in LJ$, then $cN_{L/F}(z)=\bar c$. Write $c=m+nX\in F(X)=L$, $n\not=0$, then we can conclude that $cN_{L/F}(z)=\bar c$
 is equivalent to $N_{L/F}(z)=-1$ and $m=0.$
\\ (ii) Let $F\in {\rm Aut}(A)$, then $F$ stabilizes $D$, therefore $F|_D\in {\rm Aut}(D)$ and
 $\Psi: F\mapsto F|_D$ is a homomorphism with kernel ${\rm ker} (\Psi)$ which is a normal subgroup.
 Theorems 14, 15 now imply the remaining assertion.
 \\ (iii) Suppose every separable quadratic field extension contained in $D$ is isomorphic to $L$.
For every $f\in {\rm Aut}(D)$ there is an invertible element $a\in D$ such that $f(u)=aua^{-1}$ and there is a separable
quadratic field extension $K$ such that $a\in K$. By assumption, $K=L$, hence we choose
$G((u,v))=(aua^{-1}, ava^{-1})$ to obtain $\Psi(G)=f.$
\\ Suppose now that $D$ contains two non-isomorphic separable quadratic field extensions $L$ and $K$, $c\in L$.
Assume that $\sigma(c)\not=-c$.
Choose $f(u)=bub^{-1}$ with $b\in K\setminus F$. Then we need to find some $a\in L$ such that $G(aua^{-1}, 0)=(bub^{-1},0)$.
I.e. $aua^{-1}=bub^{-1}$ for all $u\in D$. There is no such $a\in L$: in particular for all $u\in L$ this would mean that
$u=bub^{-1}$, thus $bu=ub$ for all $u\in L$. This implies $b\in L$, a contradiction. Here $\Psi$ is not
surjective.
\\ If $c=-\sigma( c)$, we also have to check the possibility that $aua^{-1}=\sigma (u) $ for all $u\in L$, $a\in LJ$.
Choose $f(u)=bub^{-1}$ with $b\in K\setminus F$. Then we need to find some $a\in LJ$ such that $G(aua^{-1}, 0)=(bub^{-1},0)$.
I.e., $aua^{-1}=bub^{-1}$ for all $u\in D$. In particular, for all $u\in L$ this would mean that
$\sigma(u)=bub^{-1}$, thus $bu=\sigma(u)b$ for all $u\in L$. This implies
$b\in LJ$, a contradiction. Here $\Psi$ is not surjective
 \end{proof}

An analogous proof shows:

\begin{corollary}
Let $D$ be a division algebra over $F$ and $A = \Cay_m(D,c)$.
Suppose that $c$ lies in a separable quadratic field extension $L\subset D$ of $F$.
Let $\Psi: {\rm Aut}(A)\to {\rm Aut}(D),$ $ F\mapsto F|_D$.
\\ (i) The maps $G:A\to A$, $G((u, v)) = (aua^{-1},z ava^{-1} )$ for all $a\in L^\times$ and all
$z\in L^\times$ with $N_{L/F}(z)=1$ are the isomorphisms
of $A$.
\\ (ii) ${\rm Aut}(A)/{\rm ker} (\Psi)$ is a
normal subgroup of ${\rm Aut}(D)$ and $G\in {\rm ker} (\Psi)$ iff $G((u,v))=(u,N_{L/F}(z)v)$
 with $z\in L^\times$ such that $N_{L/F}(z)= 1$.
\\ (iii) The map $\Psi: {\rm Aut}(A)\to {\rm Aut}(D),$ $ F\mapsto F|_D$ is surjective if and only if
every separable quadratic field extension contained in $D$ is isomorphic to $L$.
\end{corollary}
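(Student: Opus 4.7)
The plan is to mirror the argument of the preceding corollary for $\Cay(D,c)$ and $\Cay_r(D,c)$, replacing Theorems 14 and 15(b) by Theorem 15(a). Because Theorem 15(a) supplies only the single form $G((u,v))=(aua^{-1}, zava^{-1})$ with $a\in L^\times$ and no supplementary $a\in LJ$ branch, each of (i)--(iii) becomes a stripped-down version of the previous proof.

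For (i), I would feed an automorphism of $A=\Cay_m(D,c)$ into Theorem 15(a) with $d=c$. The theorem returns the stated form together with the constraint $c\,N_{L/F}(z)=aca^{-1}$. Since $a$ and $c$ both lie in the commutative algebra $L$, $aca^{-1}=c$, so the constraint collapses to $N_{L/F}(z)=1$. The converse, that any such pair $(a,z)$ actually gives an automorphism, is the final clause of Theorem 15(a) applied with $d=N_{L/F}(z^{-1})c=c$.

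For (ii), $\Psi$ is a group homomorphism by construction, so $\ker\Psi$ is automatically normal and ${\rm Aut}(A)/\ker\Psi$ embeds in ${\rm Aut}(D)$. An automorphism $G$ from (i) lies in $\ker\Psi$ exactly when conjugation by $a$ is trivial on $D$, which forces $a\in Z(D)=F^\times$; substituting $a\in F^\times$ into the formula of (i) yields the stated description of the kernel.

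For (iii), the forward direction uses Skolem--Noether: any $f\in {\rm Aut}(D)$ equals conjugation by some element of a separable quadratic subfield of $D$, which by hypothesis may be taken to be $L$ itself, so that $G((u,v))=(aua^{-1}, ava^{-1})$ maps to $f$ under $\Psi$. For the reverse direction I would argue contrapositively: if $K\subset D$ is separable quadratic and not isomorphic to $L$, then $K\cap L=F$, so no $a\in L^\times$ can conjugate $D$ like some $b\in K\setminus F$, because that would force $a\in F^\times b\subset K$ and hence $a\in K\cap L=F$, trivializing the inner automorphism. The only subtlety is this last surjectivity step, and it reduces to the elementary fact that two distinct quadratic subfields of a quaternion algebra can only meet in $F$.
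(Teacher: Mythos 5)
Your proposal follows exactly the route the paper intends: the paper gives no separate proof of this corollary, merely stating that ``an analogous proof'' to Corollary 16 applies, and you carry that out faithfully, substituting Theorem 15(a) for Theorems 14 and 15(b) so that only the $a\in L^\times$ branch survives, collapsing the constraint $cN_{L/F}(z)=aca^{-1}$ to $N_{L/F}(z)=1$ and handling (ii) and (iii) exactly as in the paper's proof of Corollary 16 (your contrapositive in (iii) via $K\cap L=F$ is the same centralizer argument the paper uses). The one caveat is inherited rather than introduced: in the forward direction of (iii) both you and the paper pass from ``every separable quadratic subfield $K$ of $D$ is isomorphic to $L$'' to ``the conjugating element may be taken in $L$ itself,'' silently identifying $K\cong L$ with $K=L$, so your argument is a faithful reproduction of the paper's own proof, including that step.
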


\subsection{} We now address the question if there are isomorphisms  between the three different classes of
Cayley-Dickson doublings.

\begin{proposition} Let $B$, $D$ be two quaternion division algebras, $c,d\in D^\times\setminus F$.
\\ (i) If ${\rm Cay}(D,c)\cong {\rm Cay}_r(B,d)$ then $B\cong D$.
\\ (ii) If ${\rm Cay}_m(D,c)\cong {\rm Cay}(B,d)$ then $B\cong D$.
\\ (iii) If ${\rm Cay}_m(D,c)\cong {\rm Cay}_r(B,d)$ then $B\cong D$.
\\  Moreover, the isomorphisms imply either both $c$ and $d$ lie in a separable quadratic field extension $L$ or they both lie in
a purely inseparable quadratic field extension $K$ of $F$, with $L$ (resp. $K$) being contained in $D$ and $B$.
If $c,d\in L$ then either  $c=x\bar x d$ or $\bar c=x\bar x d$ for some $x\in L^\times$.
If $c,d\in K$ then ${\rm Cay}(K,c)\cong {\rm Cay}(K,d)$.
\end{proposition}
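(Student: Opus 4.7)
The plan is to mirror the strategy of Theorem 13, which only used that $D$ is the unique quaternion subalgebra (Theorems 8, 9) and that the nonassociative quaternion subalgebras are essentially determined by $c$ (Theorem 10). Those facts apply uniformly to all three types ${\rm Cay}(D,c)$, ${\rm Cay}_m(D,c)$, ${\rm Cay}_r(D,c)$, so the argument transfers without change to the cross-class setting (i)--(iii).

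First I would observe that any algebra isomorphism sends a quaternion subalgebra to a quaternion subalgebra (quaternion-ness is intrinsic). In each of (i)--(iii), the left-hand Dickson algebra has $D$ as its unique quaternion subalgebra by Theorem 8 or 9, and the right-hand Dickson algebra has $B$ as its unique quaternion subalgebra by Theorem 8 or 9. The hypothetical isomorphism must therefore restrict to an algebra isomorphism $D\to B$, giving $D\cong B$.

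Next I would pin down the location of $c$ and $d$. By Remark 6, each scalar lies in a quadratic subfield of the ambient quaternion algebra, which is either separable or purely inseparable. Theorem 10 then says: when $c$ lies in a separable quadratic field extension $L\subset D$, the algebra $A$ contains a (unique) nonassociative quaternion subalgebra $\Cay(L,c)$, while if $c$ lies in a purely inseparable quadratic extension $K\subset D$, $A$ contains no nonassociative quaternion subalgebra at all, and any ${\rm Cay}(K',e)\subset A$ with $K'$ purely inseparable equals ${\rm Cay}(K,c)$. Since these properties are preserved under any isomorphism, $c$ and $d$ must be of the same type, and the relevant separable (resp. purely inseparable) quadratic subfield on each side is forced to coincide.

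In the separable case, the isomorphism restricts to an isomorphism $\Cay(L,c)\cong \Cay(L,d)$ of nonassociative quaternion division algebras; applying Waterhouse's theorem [W, Thm. 2] exactly as in Theorem 13 produces $x\in L^\times$ with $c=x\bar xd$ or $\bar c=x\bar xd$. In the purely inseparable case, the uniqueness part of Theorem 10 gives $\Cay(K,c)\cong\Cay(K',d)$, and equality of nuclei (which are $K$ and $K'$ respectively) forces $K=K'$, yielding the final conclusion $\Cay(K,c)\cong\Cay(K,d)$. The main thing to be careful about is that the uniqueness statements of Theorems 8--10 genuinely hold for all three multiplications (1), (2), (3), so that the reduction in the first paragraph is valid across classes; once that is in hand, no new computation beyond what appears in Theorem 13 is required.
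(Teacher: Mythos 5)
Your proposal is correct and follows exactly the route the paper takes: the paper disposes of this proposition with the single remark that it ``follows directly from Theorems 8, 9 and the proof of Theorem 13,'' and your elaboration (uniqueness of the quaternion subalgebra from Theorems 8 and 9, then Theorem 10 plus Waterhouse's theorem for the scalar, with the nucleus argument in the inseparable case) is precisely the intended argument spelled out.
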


This follows directly from Theorems 8, 9 and the proof of Theorem 13.

\begin{theorem}  Let $D$ be a quaternion division algebra and $c,d\in L\setminus F$ with $L\subset D$ a separable quadratic extension.
Then:
\\ (i) ${\rm Cay}(D,c)\not\cong {\rm Cay}_r(D,d)$,
\\ (ii) ${\rm Cay}_m(D,c) \not\cong  {\rm Cay}(D,d)$ and
\\ (iii) ${\rm Cay}_m(D,c) \not\cong  {\rm Cay}_r(D,d).$
\end{theorem}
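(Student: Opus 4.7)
The strategy is to argue by contradiction, following the outlines of the proofs of Theorems~14 and~15. The key observation is that the form of any putative isomorphism between two Dickson algebras is completely determined by its action on the quaternion subalgebra $D$ and on the nonassociative quaternion subalgebra ${\rm Cay}(L,c)$, regardless of which of the three variants are involved.

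Suppose $G\colon A_1\to A_2$ is such an isomorphism, where $A_1,A_2$ are the two algebras in one of (i)--(iii). By Proposition~18 we may take $A_1$ and $A_2$ over the same $D$ with $c,d$ in a common separable quadratic subfield $L\subset D$. By Theorems~8 and~9, $G|_D$ is an automorphism of $D$, so by Skolem--Noether $G|_D(u)=aua^{-1}$ for some $a\in D^\times$. By Theorem~10, $G$ maps ${\rm Cay}(L,c)$ (which coincides with its $m$- and $r$-variants since $L$ is commutative) onto ${\rm Cay}(L,d)$, and arguing exactly as in the proof of Theorem~14, after writing $D={\rm Cay}(L,h)=L\oplus LJ$ we obtain two cases: \emph{case A}, where $a\in L^\times$ and $c=N_{L/F}(z)d$, and \emph{case B}, where $a\in LJ$ and $\bar c=N_{L/F}(z)d$, with $z\in L^\times$ determined by $G((0,1))=(0,z)$. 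Expanding $G((0,v))=G((v,0)(0,1))$ using the multiplication in the target then yields
\[
G((u,v))=(aua^{-1},\,z\,ava^{-1})\qquad\text{for all }u,v\in D.
\]

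The contradiction is extracted from multiplicativity on a single, well-chosen pair. For (i) and (iii) I would use the identity $(0,v)(0,1)=(cv,0)$, which holds in both ${\rm Cay}(D,c)$ and ${\rm Cay}_m(D,c)$; for (ii) I would use $(0,1)(0,v)=(\bar vc,0)$, which holds in ${\rm Cay}_m(D,c)$. Applying $G$ to both sides and expanding the corresponding product in the target --- simplifying via $\bar zz=N_{L/F}(z)\in F$, $a\bar a=N_{D/F}(a)\in F$, and $N_{D/F}(a)\,\bar a^{-1}=a$ --- and then substituting the constraint from case~A ($c=N_{L/F}(z)d$) or case~B ($\bar c=N_{L/F}(z)d$ together with $a^{-1}da=\bar d$ for $a\in LJ$) reduces the equation to $vc=cv$ for all $v\in D$ in case~A (giving $c\in{\rm C}(D)=F$) or to $vc=\bar cv$ for all $v\in D$ in case~B (setting $v=1$ gives $c=\bar c\in F$). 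Either outcome contradicts $c\in L\setminus F$.

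The main technical obstacle is the careful bookkeeping of conjugations in case~B, where $a\in LJ$ acts nontrivially on $L$ and where the constraint on $z$ involves $\bar c$ rather than $c$. Conceptually, the three families are pairwise non-isomorphic because the position of $c$ inside the product on $(0,D)\cdot(0,D)$ is a rigid invariant of the multiplication --- moving it from one side of $\bar v' v$ to the other (or to the middle) is only possible when $c$ is already central in $D$.
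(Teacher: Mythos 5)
Your proposal is correct and follows essentially the same route as the paper's proof: restrict $G$ to the unique quaternion subalgebra $D$ (Skolem--Noether) and to the unique nonassociative quaternion subalgebra ${\rm Cay}(L,c)$ to pin down $G((u,v))=(aua^{-1},zava^{-1})$ with $N_{L/F}(z)d=aca^{-1}$, then show multiplicativity on the $(0,D)\cdot(0,D)$ part forces $c$ to be central in $D$. Your only deviation --- testing multiplicativity on the single pair $(0,v)(0,1)$ (resp.\ $(0,1)(0,v)$) instead of writing out the general condition --- is a harmless streamlining that in fact makes the case split between $a\in L$ and $a\in LJ$ unnecessary.
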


\begin{proof} (i)
Suppose there is an algebra isomorphism $G:{\rm Cay}(D,c)=D\oplus Dl\to {\rm Cay}_r(D,d)=D\oplus Dl'$.
Restricting $G$ to the nonassociative quaternion subalgebra ${\rm Cay}(L,c)$ yields an isomorphism
$G_0:{\rm Cay}(L,c)\to {\rm Cay}(L,d)$ by Theorem 10. By [W, Thm. 2], either $G_0(u,v)=(u,vz)$ and $c=z\bar z d$ or
$G_0(u,v)=(\bar u,\bar v z)$ and $\bar c=z\bar z d$ for some $z\in L^\times$, $u,v\in L^\times$. In particular,
$G(l)=G((0,1_D))=G_0((0,1_L))=(0,z)=(z,0)(0,1_D)=zl'$,
i.e., $G(l)=zl'$ for some invertible $z\in L$. Then $G(c)=G(l^2)=G(l)G(l)=(zl')(zl')=
(0,z)(0,z)=(N_{D/F}(z)d,0)$. Since $G$ restricted to $D$ is an automorphism of $D$, there exists an element $a\in D^\times$
such that $G(u)=aua^{-1}$ for all $u\in D$. Thus $G(c)=aca^{-1}$ and $N_{D/F}(z)d=aca^{-1}$.
Since $G$ is multiplicative we have
$$G((0,v))= G((v,0)(0,1))=G((v,0))G((0,1))=(ava^{-1})(0,z)=(0,zava^{-1} ),$$
 so that $G((u,v))=(aua^{-1},z ava^{-1})$.
 Moreover, we must have
 $$G((u,v)(u',v'))=G(u,v)G(u',v')$$
  which is equivalent to
  \begin{enumerate}
  \item $ac\sigma(v')va^{-1}=d \sigma(a^{-1})\sigma(v')va^{-1} N(a)N(z),$
\item $zav\sigma(u')a^{-1}=zav\sigma(u')\sigma(a) N(a)^{-1}$
\end{enumerate}
for all $u,u',v,v'\in D$.
Now (2) is the same equation as (2) in Theorem 14, thus true for all $z\in L^\times.$
(1) is equivalent to
$$c\sigma(v')v=\sigma(v')vaca^{-1} $$
for all $v,v'\in D$, which implies that $ca=ac$.
Write $D={\rm Cay}(L,h)=L\oplus LJ$ with $J^2=h$.
Since both ${\rm Cay}(D,c)$ and ${\rm Cay}_r(D,c)$ contain the unique nonassociative quaternion subalgebra
${\rm Cay}(L,c)$, respectively ${\rm Cay}(L,d)$, the same argument as used
in the proofs of Theorems 14, 15 implies that restricting $G$ to ${\rm Cay}(L,c)$ yields that
either $a\in L$ or $a\in LJ$. The additional requirement that $ca=ac$ forces $a\in L$ and so equation (1)
is the same as $cv=vc$ for all $v\in D$, implying the contradiction that $c\in F^\times$.
\\ (ii) Suppose there is such an algebra isomorphism  $G:{\rm Cay}(D,c)=D\oplus Dl\to {\rm Cay}_m(D,d)=D\oplus Dl'$.
Restricting $G$ to the nonassociative quaternion subalgebra ${\rm Cay}(L,c)$ again yields an isomorphism
$G_0:{\rm Cay}(L,c)\to {\rm Cay}(L,d)$ (Theorem 10). As in (i), we conclude that
$G(l)=G((0,1_D))=G_0((0,1_L))=(0,z)=(z,0)(0,1_D)=zl'$,
i.e., $G(l)=zl'$ for some invertible $z\in L$. Thus $G(c)=G(l^2)=G(l)G(l)=(zl')(zl')=
(0,z)(0,z)=(\bar zdz,0)$. Since $G$ restricted to $D$ is an automorphism of $D$, there exists an element $a\in D^\times$
such that $G(u)=aua^{-1}$ for all $u\in D$. Thus $G(c)=aca^{-1}$ and $\sigma( z)dz=aca^{-1}$.
Moreover, $G((u,v))=(aua^{-1},zava^{-1})$. Since $G$ is multiplicative we must have
 $$G((u,v)(u',v'))=G(u,v)G(u',v')$$
  which is equivalent to
  \begin{enumerate}
   \item $a\sigma(v')cva^{-1}=d \sigma(a^{-1})\sigma(v')va^{-1} N(a)N(z),$
\item $zav\sigma(u')a^{-1}=zav\sigma(u')\sigma(a) N(a)^{-1}$
\end{enumerate}
for all $u,u',v,v'\in D$.
Now (2) is  the same equation as (2) in (i), thus true for all $z\in L^\times$ and (1) is equivalent to
$$\sigma( v')cv=c\sigma( v')v$$
for all $v,v'\in D$, which implies that $cw=wc$ for all $w\in D$, i.e. $c\in F^\times$, a contradiction.
\\ (iii) is proved analogously.
\end{proof}

\section{Derivations}

Let $F$ be a field of characteristic not 2 and $B$ a quaternion division algebra.
 Let $A={\rm Cay}(B,c)$,
$A={\rm Cay}_m(B,c)$ or $A={\rm Cay}_r(B,c)$.
 Let $D\in {\rm Der}(A)$, i.e. $D$ is an $F$-linear map such that $D(xy)=D(x)y+xD(y)$.
Obviously, $D(1)=0$. Assume $D((c,0))=(p,q)$ and $D(l)=(r,s)$ with $p,q,r,s\in D$.
Then
\begin{enumerate}
\item $D(c)=D(l^2)=D(l)l+lD(l)$,
\item $D((cl)c)=D(c(lc))=D(N_{D/F}(c)l)=N_{D/F}(c)D(l)=(c\bar cr,c\bar cs)$,
\item $D(cl)l+clD(l)=D((cl)l)=D((cl)l)=D(c^2)=D((c+\bar c)c-c\bar c)=(c+\bar c) D(c).$
\end{enumerate}

We obtain the following partial results:

\begin{proposition} Let $A={\rm Cay}(B,c)$.\\
 (i)  $D((c,0))=(0,0)$ and $D(l)=(r,s)$ for $r,s\in B$ such that
$\bar s=-s$, $\bar r=-r$ and $\overline{rc}=-rc$.
\\ (ii) The $F$-linear map
$$D_0((u,v))=(0, sv)$$
is a derivation of $A$ if and only if $\bar s=-s$.
\\ (iii) Let $K$ be a separable quadratic subfield of $B$ such that $c\in K$.
Then
 ${\rm Der}(A_0)\hookrightarrow {\rm Der}(A)$
for the nonassociative quaternion subalgebra $A_0={\rm Cay}(K,c)$.
\end{proposition}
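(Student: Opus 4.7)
The plan is to extract (i) from the three displayed identities (1)--(3) preceding the proposition, verify (ii) by a direct Leibniz computation, and deduce (iii) from (ii) together with Waterhouse's description of ${\rm Der}({\rm Cay}(K,c))$ recorded in Theorem 5.

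For (i), I first apply identity (1) using the product formula of ${\rm Cay}(B,c)$. Writing $D(l)=(r,s)$, direct expansion gives $D(l)\,l=(cs,r)$ and $l\,D(l)=(c\bar s,\bar r)$, so
\[
(p,q):=D((c,0))=(c(s+\bar s),\, r+\bar r).
\]
Next I turn to identity (3). The only nontrivial ingredient is $D(cl)=D((c,0))(0,1)+(c,0)D(l)=(cq+cr,\ p+sc)$; plugging this into $D((cl)l)=D(cl)\,l+(cl)D(l)$ and comparing with $(c+\bar c)(p,q)$ cancels the common $cp,cq$ terms and leaves
\[
c(r+\bar r) = \bar c\,q,\qquad c(s+\bar s)c = \bar c\,p.
\]
Substituting the values of $p,q$ already computed, the first equation becomes $(c-\bar c)(r+\bar r)=0$, while the second, after using that $s+\bar s\in F$ commutes with $c$, becomes $(c^2-c\bar c)(s+\bar s)=0$. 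Since $c\notin F$ and $B$ is a division algebra, both $c-\bar c$ and $c^2-c\bar c=c(c-\bar c)$ are invertible in $B$, forcing $\bar r=-r$, $\bar s=-s$, and therefore $p=q=0$, i.e.\ $D((c,0))=(0,0)$. Finally, with $D((c,0))=0$ identity (2) collapses to $crc=c\bar c\,r$ in the first slot (the second slot being automatic since $N_{B/F}(c)\in F$), i.e.\ $rc=\bar c\,r$; combined with $\bar r=-r$ this is exactly the condition $\overline{rc}=-rc$.

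For (ii), I substitute $D_0((u,v))=(0,sv)$ directly into the Leibniz identity for multiplication (1). The second components match on the nose, while, after computing $(0,sv)(u',v')$ and $(u,v)(0,sv')$ with the product formula, the first component collapses to
\[
c\,\overline{v'}(s+\bar s)\,v,
\]
which vanishes for all $v,v'\in B$ precisely when $s+\bar s=0$, yielding both necessity and sufficiency in a single stroke.

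Part (iii) then follows from (ii) together with Theorem 5: Waterhouse identifies ${\rm Der}(A_0)$ with the maps $(u,v)\mapsto(0,sv)$ where $s\in K$ and $\bar s=-s$; every such $s$ still lies in $B$ with the same trace-zero condition, so (ii) shows the very same formula defines an element of ${\rm Der}(A)$. The resulting assignment ${\rm Der}(A_0)\hookrightarrow{\rm Der}(A)$ is patently $F$-linear and injective. The main technical obstacle lies in part (i): one must substitute the expressions for $p,q$ coming from identity (1) back into the equations produced by identity (3) in exactly the right order before the trace-zero conditions on $r$ and $s$ emerge; everything else reduces to careful but routine bookkeeping with the doubling formula.
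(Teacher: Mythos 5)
Your argument is correct and follows essentially the same route as the paper: both extract the conditions on $p,q,r,s$ by expanding the three displayed identities (1)--(3) with the doubling formula, and both deduce (iii) from (ii) via Waterhouse's description of ${\rm Der}({\rm Cay}(K,c))$. The only difference is cosmetic -- you use identity (3) to force $p=q=0$ (and hence $\bar r=-r$, $\bar s=-s$) before letting identity (2) collapse to $crc=c\bar c r$, whereas the paper extracts $s+\bar s=0$, $p=0$ and the relation $3rc+2\bar rc=\bar cr$ from (2) first and then uses (3) to get $q=0$; your ordering is, if anything, slightly tidier.
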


\begin{proof}
(i) Equation (1) implies that $(p,q)=(c(s+\bar s), r+\bar r),$
therefore
$$p=c(s+\bar s),\quad q=r+\bar r.$$
Moreover, by equation (2),
$$(cqc+crc+c\bar q c,p\bar c+ sc\bar c+c\bar p) =(c\bar cr,c\bar cs).$$
 Hence
$$cqc+crc+c\bar q c=c\bar cr \text{ and } p\bar c+ sc\bar c+c\bar p=c\bar cs.$$
Since $p=c(s+\bar s)$ and $ q=r+\bar r$ this implies that $2c(s+\bar s)\bar c=0$ and
$3rc+ 2\bar rc=\bar cr$, thus $s+\bar s=0$ and $p=0$. Besides, (3) yields
$$(csc+c\bar s c,cq+cr+c\bar r) =(0,(c+\bar c)q).$$
We obtain $cq+cr+c\bar r=(c+\bar c)q$ and substituting $q=r+\bar r$ yields
$2cq=(c+\bar c)q$, thus $(c-\bar c)q =0$ and $q=0$. Therefore
$$D((c,0))=(0,0).$$
Since $q=0$  also $r+\bar r=0$. Substituting $\bar r=-r$ into the equation $3rc+ 2\bar rc=\bar cr$
yields $rc=\bar cr,$ hence also $rc=-\overline{rc}$.
\\ (ii) is an easy calculation.
\\ (iii) We know that $\delta\in {\rm Der}(A_0)$ iff $\delta=\delta_s(u',v')=(0,sv')$ for
 some $s\in K$ such that $s+\bar s=0$, with $u',v'\in K$. The map
 $$F:{\rm Der}(A_0)\longrightarrow {\rm Der}(A), \delta_s\to D_s $$
 with $D_s(u,v)=(0,sv)$ for $u,v\in B$
 is an injective algebra homomorphism.
\end{proof}

\begin{proposition} Let $A={\rm Cay}_m(B,c)$ and $D\in {\rm Der}(A)$. \\
(i) $D((c,0))=(cs+\bar sc,0)$ and
$D(l)=(r,s)$ for $r,s\in D$ such that
$\bar s=-s$, $\bar r=-r$ and $rc=-\overline{rc}.$
\\ (ii) For $s\in B$, the $F$-linear map
$$D_0((u,v))=(0, sv)$$
is a derivation of $A$ if and only if   $cs+\bar sc=0$.
\\ (iii) Let $K$ be a separable quadratic subfield of $B$ such that $c\in K$.
Then ${\rm Der}(A_0)\hookrightarrow {\rm Der}(A)$
for the nonassociative quaternion subalgebra $A_0={\rm Cay}(K,c)$.
\end{proposition}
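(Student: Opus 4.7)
The plan is to mimic closely the proof of Proposition 20, substituting the middle-scalar product $(u,v)(u',v')=(uu'+\bar v'cv,\,v'u+v\bar u')$ for the left-scalar product at every step. Write $D((c,0))=(p,q)$ and $D(l)=(r,s)$, and exploit the three identities
\begin{align*}
D(c)&=D(l^2)=D(l)\,l+l\,D(l),\\
D((cl)c)&=N_{B/F}(c)\,D(l),\\
D(cl)\,l+cl\,D(l)&=D(c^2)=(c+\bar c)\,D(c),
\end{align*}
which will supply enough linear relations to pin down $p,q,r,s$.

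For part (i), a short computation from the middle-scalar formula gives $D(l)\,l=(cs,r)$ and $l\,D(l)=(\bar sc,\bar r)$, so $p=cs+\bar sc$ and $q=r+\bar r$. Substituting into equation (3) and comparing second coordinates forces $(c-\bar c)q=0$, whence $q=0$ and therefore $\bar r=-r$; the remaining first-coordinate condition $pc=\bar cp$ is automatic once one replaces $c^2$ by ${\rm Tr}(c)\,c-N_{B/F}(c)$. With $q=0$, the first coordinate of equation (2) collapses to $crc=c\bar c\,r$, i.e.\ $rc=\bar cr$, which is equivalent to $rc=-\overline{rc}$. The second coordinate of (2), after expanding $\bar p=\bar s\bar c+\bar cs$ and using that $c+\bar c$ and $c\bar c$ lie in $F$, reduces to $c(s+\bar s)\bar c+N_{B/F}(c)(s+\bar s)=0$; since $s+\bar s\in F$ is central, this simplifies to $2N_{B/F}(c)(s+\bar s)=0$, so $\bar s=-s$ in characteristic $\neq 2$.

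Part (ii) is a direct check: computing both sides of $D_0((u,v)(u',v'))=D_0(u,v)(u',v')+(u,v)D_0(u',v')$, the second coordinates cancel identically, while the first coordinates agree precisely when $\bar v'(cs+\bar sc)v=0$ for all $v,v'\in B$, i.e.\ $cs+\bar sc=0$. For part (iii), Theorem 4 describes every derivation of $A_0=\Cay(K,c)$ as $\delta_s((u,v))=(0,sv)$ with $s\in K$ and $s+\bar s=0$; extending $\delta_s$ to $D_s((u,v))=(0,sv)$ on $B\oplus B$, the commutativity of $K$ gives $cs=sc$ and $\bar s=-s$ gives $\bar sc=-sc$, so $cs+\bar sc=0$ and (ii) ensures $D_s\in{\rm Der}(A)$. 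The assignment $\delta_s\mapsto D_s$ is plainly $F$-linear and injective, yielding the desired embedding ${\rm Der}(A_0)\hookrightarrow{\rm Der}(A)$.

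The main obstacle is not any single step but the bookkeeping in part (i): to extract the clean conclusion $\bar s=-s$ from equation (2), one must carry both coordinates side by side, expand $\bar p=\bar s\bar c+\bar cs$ carefully, and repeatedly exploit that ${\rm Tr}(c)$ and $N_{B/F}(c)$ lie in $F$ and therefore commute with everything in $B$. Once these centrality facts are invoked, the residual identities collapse to short expressions, exactly parallel to the Cay-version computation of Proposition 20.
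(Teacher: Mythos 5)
Your proposal is correct and follows essentially the same route as the paper: the same three identities, the same substitutions $p=cs+\bar sc$, $q=r+\bar r$, with only a harmless reordering (you extract $q=0$ from equation (3) before simplifying equation (2), whereas the paper does the reverse), and your explicit verifications of (ii) and (iii), which the paper dismisses as easy calculations, check out. The only blemish is your parenthetical claim that the first coordinate of equation (3) is ``automatic'' before $\bar s=-s$ is known (it in fact reduces to $(s+\bar s)(c^2-N_{B/F}(c))=0$), but since you independently derive $\bar s=-s$ from the second coordinate of (2), this redundancy does not affect the argument.
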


\begin{proof}
(i) By equation (1), $(p,q)=(cs+\bar s c, r+\bar r)$, therefore
$$p=cs+\bar sc,\quad q=r+\bar r.$$
Moreover, by equation (2),
$$(cqc+crc+\bar q c^2,p\bar c+ sc\bar c+c\bar p)=(c\bar cr,c\bar cs).$$
 Hence
$$cqc+crc+\bar q c^2=c\bar cr \text{ and } p\bar c+ sc\bar c+c\bar p=c\bar cs.$$
Since $p=cs+\bar s c$ and $ q=r+\bar r$ this implies that
$0=\bar c(s+\bar c)$, i.e. $s+\bar s=0$ and $p=cs-sc$, as well as
$3rc+ 2\bar rc=\bar cr$. Besides, using equation (3) implies that
$$(cp+csc+\bar s c^2,cq+cr+c\bar r)=((c+\bar c)p,(c+\bar c)q).$$
We obtain $cq+cr+c\bar r=(c+\bar c)q$ and substituting $q=r+\bar r$ yields
$2cq=(c+\bar c)q$, thus $(c-\bar c)q =0$ and $q=0$.
Moreover, we get $cp+csc+\bar sc=(c+\bar c)p$ which yields
$(c+\bar c)s=s(c+\bar c) $ for $\bar s=-s$ which is always true.
Therefore
$$D((c,0))=(cs-sc,0).$$
Since $q=0$  also $r+\bar r=0$. Substituting $\bar r=-r$ into the equation $3rc+ 2\bar rc=\bar cr$
yields $rc=\bar cr,$ hence also $rc=-\overline{rc}$.
\\ (ii) is an easy calculation  and (iii) is proved as in Proposition 20.
\end{proof}

\begin{proposition} Let $A={\rm Cay}_r(B,c)$ and $D\in {\rm Der}(A)$. \\
(i) $D((c,0))=(0,0)$ and
$D(l)=(r,s)$ for $r,s\in D$ such that
$\bar s=-s$, $\bar r=-r$ and $\overline{rc}=-rc$.
\\ (ii) For $s\in B$, the $F$-linear map
$$D_0((u,v))=(0, sv)$$
is a derivation of $A$ if and only if   $s+\bar s=0$.
\\ (iii) Let $K$ be a separable quadratic subfield of $B$ such that $c\in K$. Then
 ${\rm Der}(A_0)\hookrightarrow {\rm Der}(A)$ for the nonassociative quaternion subalgebra $A_0={\rm Cay}(K,c)$.
\end{proposition}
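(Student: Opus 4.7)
The plan is to mirror the structure of Propositions 20 and 21, using the three consequences of the Leibniz rule stated just above Proposition 20 (equation~(1) from $l^2=(c,0)$, equation~(2) from $(cl)c = c(lc) = N_{D/F}(c)\,l$, and equation~(3) from $(cl)l = c^2$). Only the concrete products need to be recomputed, since now $A = \Cay_r(B,c)$ carries the multiplication $(u,v)(u',v') = (uu' + (\bar v' v)c,\ v'u + v\bar u')$.

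Setting $D((c,0))=(p,q)$ and $D(l)=(r,s)$, I would first expand equation~(1): the formulas $D(l)\cdot l=(sc,r)$ and $l\cdot D(l)=(\bar s c,\bar r)$ give $p=(s+\bar s)c$ and $q=r+\bar r$. Next I compute $D(cl) = D(c)\cdot l + c\cdot D(l) = (qc+cr,\ p+sc)$ and expand both sides of equation~(2). Comparing second components, and using that $s+\bar s\in F$ and $N_{D/F}(c)\in F$, the identity collapses to $3s + 2\bar s = s$, forcing $s+\bar s=0$ and hence $p=0$; comparing first components yields the auxiliary identity $2qc^2 + crc = N_{D/F}(c)\,r$.

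Equation~(3), read as $D(cl)\cdot l + cl\cdot D(l) = (c+\bar c)\,D(c)$, has trivial first coordinates once $p=0$ and $s+\bar s=0$; the second coordinates reduce to $2cq = (c+\bar c)\,q$, i.e.\ $(c-\bar c)\,q = 0$. Since $c\notin F$ forces $c\ne\bar c$ and $B$ is a division algebra, this yields $q=0$, whence $\bar r=-r$. Feeding this back into the auxiliary identity gives $crc = c\bar c\,r$, hence $rc = \bar c\,r$, and then $\overline{rc} = \bar c\bar r = -\bar c r = -rc$, finishing part~(i).

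Part~(ii) is a short Leibniz verification: the second coordinates of $D_0((u,v)(u',v'))$ and $D_0(u,v)(u',v') + (u,v)D_0(u',v')$ agree automatically, while the first coordinates differ only by $\bar{v'}(s+\bar s)v\cdot c$, which vanishes for all $v,v'\in B$ exactly when $s+\bar s=0$. Part~(iii) then follows immediately from Theorem~4: for each $s\in K$ with $s+\bar s=0$ the map $D_s(u,v)=(0,sv)$ on $A$ is a derivation by~(ii), and $\delta_s\mapsto D_s$ is an obvious injective algebra homomorphism ${\rm Der}(A_0)\hookrightarrow{\rm Der}(A)$. The main hurdle I anticipate is the careful bookkeeping in equation~(2), where one must track which traces ($s+\bar s$, $r+\bar r$, $c+\bar c$, $N_{D/F}(c)$) lie in $F$ so that everything collapses cleanly.
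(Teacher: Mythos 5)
Your proposal is correct and follows essentially the same route as the paper's proof: the same three consequences of the Leibniz rule applied to $l^2=(c,0)$, $(cl)c=c(lc)$ and $(cl)l=c^2$, with the same intermediate conclusions (your auxiliary identity $2qc^2+crc=c\bar c\,r$ is the paper's $3rc+2\bar rc=\bar cr$ after substituting $q=r+\bar r$ and cancelling $c$ on the left), and the same verification in (ii) and embedding in (iii). The only blemish is the reference in part (iii): the description of ${\rm Der}({\rm Cay}(K,c))$ you invoke is Theorem 3 (Waterhouse's [W, Thm.~3]), not Theorem 4.
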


\begin{proof}
(i)  By equation (1), $(p,q)=(sc+\bar s c, r+\bar r)$, therefore
$$p=(s+\bar s)c,\quad q=r+\bar r.$$
Moreover, by equation (2),
$$(qc^2+crc+\bar q c^2,p\bar c+ sc\bar c+c\bar p)=(c\bar cr,c\bar cs).$$
 Hence
$$qc^2+crc+\bar q c^2=c\bar cr \text{ and } p\bar c+ sc\bar c+c\bar p=c\bar cs.$$
Since $p=(s+\bar s) c$ and $ q=r+\bar r$ this implies that
 $s+\bar s=0$ and $p=0$, as well as
$3rc+ 2\bar rc=\bar cr$. Besides, using (3) gives
$$(pc+sc^2+\bar s c^2,qc+cr+c\bar r)=((c+\bar c)p,(c+\bar c)q).$$
We obtain $qc+cr+c\bar r=(c+\bar c)q$ and substituting $q=r+\bar r$ yields
$q=0$. Therefore
$$D((c,0))=(0,0).$$
Since $q=0$  also $r+\bar r=0$. Substituting $\bar r=-r$ into the equation $3rc+ 2\bar rc=\bar cr$
yields $rc=\bar cr,$ hence also $rc=-\overline{rc}$.
\\ (ii) again is an easy calculation and (iii) proved as in Proposition 20.
\end{proof}

\begin{remark}
Obviously $r=0$ will work in part (i) of all of the last three propositions. Since $c\in L$ for some quadratic
subfield $L$ of $B$, any $r\in L^\times$  will give a contradiction.
\end{remark}

If $A$ is a real division algebra of dimension 8 then its derivation algebra must be one of  the following
[B-O1]: Compact $G_2$, $su(3)$, $su(2)\oplus su(2)$, $su(2)\oplus N$ with $N$ an abelian ideal of dimension 0 or 1
or $N$ with $N$ abelian and of dimension 0, 1, or 2. From Corollaries 16 and 17 we conclude:

\begin{corollary} For all Dickson algebras $A={\rm Cay}(B,c)$, $A={\rm Cay}_m(B,c)$ and $A={\rm Cay}_r(B,c)$ over
$\mathbb{R}$,
$${\rm Der}(A)\cong su(2)\oplus F.$$
The decomposition of  $A$ into irreducible $su(2)$-modules has the form
$1+1+3+3$ (denoting an irreducible $su(2)$-module by its dimension).
\end{corollary}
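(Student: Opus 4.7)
The plan is to combine three ingredients: the explicit derivations produced in Proposition 20(ii) (and its analogues 21(ii), 22(ii)), the differentials at the identity of the automorphism families from Corollaries 16 and 17, and the Benkart--Osborn classification of derivation algebras of real $8$-dimensional division algebras recalled in the paragraph immediately preceding the statement.

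First I would exhibit an $su(2)$ subalgebra of ${\rm Der}(A)$. Proposition 20(ii) (and its analogues for ${\rm Cay}_m$ and ${\rm Cay}_r$) shows that for every $s\in B$ with $\bar s=-s$ the map $D_s(u,v)=(0,sv)$ is a derivation of $A$. The space of such $s$ is the $3$-dimensional space $B_0$ of pure quaternions of $B=\mathbb{H}$, and a direct bracket computation gives $[D_s,D_{s'}]=D_{[s,s']}$, so $s\mapsto D_s$ is a Lie algebra embedding of $B_0\cong su(2)$ into ${\rm Der}(A)$.

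Next I would produce a $1$-dimensional central complement and then invoke the classification. Differentiating at $t=0$ the $1$-parameter family of automorphisms $(u,v)\mapsto(a_tua_t^{-1},a_tva_t^{-1})$ with $a_t=\exp(th)$ and $h\in L\cap B_0=\mathbb{R}i$ (from Corollaries 16, 17 with the $z$-parameter set to $1$) gives the derivation $D^h(u,v)=([h,u],[h,v])$. Setting $E:=D^i-D_i$ and using $[D^i,D_j]=2D_k=[D_i,D_j]$ together with the corresponding identities for $D_k$, and the vanishing of $[D^i,D_i]$ and $[D_i,D_i]$, one finds $[E,D_s]=0$ for every $s\in B_0$, so $E$ spans a $1$-dimensional central ideal complementary to $\{D_s\}$. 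This gives the lower bound ${\rm Der}(A)\supseteq su(2)\oplus F$. The Benkart--Osborn list then forces equality: the semisimple candidates $G_2$, $su(3)$, $su(2)\oplus su(2)$ are ruled out by the nonzero center spanned by $E$, the purely abelian candidates by the simple $3$-dimensional $\{D_s\}$, and $su(2)$ alone again by $E$, leaving only ${\rm Der}(A)\cong su(2)\oplus F$.

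Finally, I would read off the $su(2)$-module structure of $A=B\oplus Bl$ by inspecting the $\{D_s\}$-action on the natural decomposition into the four subspaces $F,\,B_0,\,Fl,\,B_0 l$ of dimensions $1,3,1,3$, obtaining the asserted decomposition $1+1+3+3$. The main obstacle lies in this last step: verifying that each of the four subspaces is $su(2)$-invariant and irreducible --- in particular matching the action on the second summand $Bl$ with the claimed $1+3$ splitting --- requires a careful inspection of the $\{D_s\}$-action, since the first two steps amount to explicit Leibniz checks and the classification step is routine once the inclusions of step two are established.
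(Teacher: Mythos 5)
Your route is genuinely different from the paper's: the paper does not exhibit any derivations at all, but instead computes $\dim_{\mathbb R}{\rm Aut}(A)=\dim_{\mathbb R}\ker(\Psi)+\dim_{\mathbb R}{\rm Aut}(B)=1+3=4$ from Corollaries 16 and 17(iii), deduces $\dim_{\mathbb R}{\rm Der}(A)=4$ together with the existence of a one-dimensional normal subgroup of ${\rm Aut}(A)$, and then imports both the identification ${\rm Der}(A)\cong su(2)\oplus F$ and the module decomposition from [B-O1, Proposition 6.1], using the uniqueness of the quaternion and nonassociative quaternion subalgebras. Measured against either route, your proposal has concrete gaps. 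First, you misquote Proposition 21(ii): for $A={\rm Cay}_m(B,c)$ the map $D_s(u,v)=(0,sv)$ is a derivation if and only if $cs+\bar sc=0$, which forces $s$ to be pure and to commute with $c$, i.e.\ $s\in\mathbb{R}c_1$ with $c_1$ the pure part of $c$. That is a one-dimensional space, so the embedding $B_0\cong su(2)\hookrightarrow{\rm Der}(A)$, $s\mapsto D_s$, simply does not exist in the middle case. Second, the classification step does not close: knowing that ${\rm Der}(A)$ \emph{contains} a copy of $su(2)\oplus F$ does not exclude $su(2)\oplus su(2)$, $su(3)$ or compact $G_2$, each of which contains such a subalgebra (e.g.\ $u(2)\subset su(3)$); your $E$ is central only in the subalgebra you exhibit, not a priori in ${\rm Der}(A)$. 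To finish you need the upper bound $\dim{\rm Der}(A)=\dim{\rm Aut}(A)\le 4$, which is exactly the content of the paper's argument via $\Psi$ and which your proof never establishes.

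Third, and most seriously, the final step cannot be carried out as you describe. Under the $su(2)$ you construct, $D_s$ annihilates all of $B\oplus 0$ (four trivial summands, not $1+3$), and on $0\oplus B$ it acts by left multiplication $v\mapsto sv$; the subspaces $Fl$ and $B_0l$ are not invariant (already $D_s(l)=(0,s)\notin Fl$), and $Bl\cong\mathbb{H}$ is irreducible of real dimension $4$ under left multiplication by the pure quaternions, since an invariant subspace would be a left ideal of $\mathbb{H}$. So the decomposition of $A$ over $\{D_s\}$ is $1+1+1+1+4$, and since any $su(2)$ subalgebra of $su(2)\oplus F$ must coincide with the $su(2)$ summand, your computation would contradict the stated $1+1+3+3$ rather than establish it. (This tension between Proposition 20(ii) and the stated decomposition is worth raising with the author; but as a proof of the corollary as written, your last step fails.) The paper sidesteps this entirely by citing [B-O1, Proposition 6.1] for the decomposition instead of computing it.
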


\begin{proof} By Corollary 16, repectively 17, (iii), ${\rm dim}_\mathbb{R}{\rm ker}(\Psi)=
{\rm dim}_\mathbb{R}\{z\in \mathbb{C}^\times\,|\, N_{\mathbb{C}/\mathbb{R}}(z)=1\}=1$
 and ${\rm dim}_\mathbb{R}{\rm Aut}(A)={\rm dim}_\mathbb{R}{\rm Aut}(B)+1=4$, therefore
${\dim}_\mathbb{R}{\rm Aut}(A)={\dim}_\mathbb{R}{\rm Der}(A)= 4$ and ${\rm Aut}(A)$ has a normal
subgroup of dimension 1. Since all algebras are unital and we have the unique quaternion subalgebra $\mathbb{H}$
and the unique nonassociative quaternion subalgebra ${\rm Cay}(\mathbb{C},c)$, the rest
of the assertion follows directly from [B-O1, Proposition 6.1].
\end{proof}

Examples of real division algebras with ${\rm Der}(A)\cong su(2)$ where the decomposition of $A$ into irreducible
$su(2)$-modules has the form $1+1+3+3$ were already given by Rochdi [R]. These were  real noncommutative
 Jordan algebras.
 To our knowledge there are no known examples yet of unital algebras $A$ with ${\rm Der}(A)\cong su(2)\oplus F,$
  where the decomposition of $A$ into irreducible $su(2)$-modules has the form $1+1+3+3$. Thus our algebras would answer
  the second part of
  the question posed by Benkart and Osborn [B-O1, p.~292], if there are eight-dimensional real division algebras with derivation algebra
isomorphic to $su(2)\oplus F$ and decomposition of  $A$ into irreducible $su(2)$-modules of the form
$1+1+3+3$.

\section{The opposite algebras}

Let $D$ be a quaternion  algebra over $F$ and $A^{\rm op}$ be the opposite algebra of the Dickson algebra
\begin{enumerate}
\item $A={\rm Cay}(D,c)$,
\item$A={\rm Cay}_m(D,c)$ or
\item $A={\rm Cay}_r(D,c)$.
\end{enumerate}
Then the multiplication in $A^{\rm op}$ is given by
\begin{enumerate}
\item $(u,v)\cdot(u',v')=(u',v')(u,v)=(u'u+c \bar v v',vu'+v'\bar u),$
\item  $(u,v)\cdot(u',v')= (u'u+ \bar v cv',vu'+v'\bar u)$ or
\item $(u,v)\cdot(u',v')=(u'u+ \bar v v'c,vu'+v'\bar u).$
\end{enumerate}
Our previous results easily carry over to the opposite algebras.
$A^{\rm op}$ is again not third power-associative and not quadratic.
If $D$ is a quaternion division algebra over $F$, then ${\rm Nuc}(A^{\rm op})=F$ and
 ${\rm C}(A^{\rm op})=F$ (Lemma 4, 7). $A^{\rm op}$ has the unique quaternion subalgebra $D$
 and if $c\in D^\times$ is contained in a separable quadratic field extension
$L$ of $F$ then ${\rm Cay}(L,c)$ is its only nonassociative quaternion subalgebra.
 If $c\in D^\times$ is instead contained in a purely inseparable quadratic field extension $K$
of $F$, then $A^{\rm op}$ has no nonassociative quaternion subalgebras. In that case, if
${\rm Cay}(K',e)$ is a subalgebra of $A$, where $K'$ is a purely inseparable quadratic field extension of
$F$, then ${\rm Cay}(K',e)={\rm Cay}(K,c)$. In particular,
 $A^{\rm op}$ has the same derivation algebra as the Dickson algebra $A$:
if $F=\mathbb{R}$ then ${\rm Der}(A^{\rm op})\cong su(2)\oplus F$ and
the decomposition of the unital algebra $A^{\rm op}$ into irreducible $su(2)$-modules is again of type
$1+1+3+3$.

Since $D\cong D^{\rm op}$ via the involution $\sigma$, we know that ${\rm Cay}(D,c)\cong {\rm Cay}(D^{\rm op},\sigma(c))$,
${\rm Cay}_m(D,c)\cong {\rm Cay}_m(D^{\rm op},\sigma(c))$ and ${\rm Cay}_r(D,c)\cong {\rm Cay}_r(D^{\rm op},\sigma(c))$.

 Suppose $A^{\rm op}\cong {\rm Cay}(B,d)$, $A^{\rm op}\cong {\rm Cay}_m(B,d)$ or $A^{\rm op}\cong {\rm Cay}_r(B,d)$.
Then $D^{\rm op}\cong B$ since he quaternion subalgebras are unique. However, the opposite algebra of an eight-dimensional Dickson algebra is not a Dickson algebra:

\begin{theorem}  Let $D$ be a quaternion division algebra and $c\in L\setminus F$ with $L\subset D$ a separable quadratic extension.
Then:
\\ (i) ${\rm Cay}(D,c)^{\rm op}\not\cong {\rm Cay}(D,d)$, ${\rm Cay}(D,c)^{\rm op}\not\cong {\rm Cay}_m(D,d)$
and ${\rm Cay}(D,c)^{\rm op}\not\cong {\rm Cay}_r(D,d)$,
\\ (ii) ${\rm Cay}_m(D,c)^{\rm op} \not\cong  {\rm Cay}(D,d)$, ${\rm Cay}_m(D,c)^{\rm op} \not\cong  {\rm Cay}_m(D,d)$ and
${\rm Cay}_m(D,c)^{\rm op} \not\cong  {\rm Cay}_r(D,d)$,
\\ (iii) ${\rm Cay}_r(D,c)^{\rm op} \not\cong  {\rm Cay}(D,d)$, ${\rm Cay}_r(D,c)^{\rm op} \not\cong  {\rm Cay}_m(D,d)$ and
${\rm Cay}_r(D,c)^{\rm op} \not\cong  {\rm Cay}_r(D,d)$
for all $d\in D$.
\end{theorem}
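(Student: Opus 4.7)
The plan is to model the argument on the proof of Theorem 18, adapted to account for the passage to the opposite algebra. Throughout, let $A$ be one of the Dickson algebras $\Cay(D,c)$, $\Cay_m(D,c)$, $\Cay_r(D,c)$ with $c\in L\setminus F$, and suppose for contradiction that there is an algebra isomorphism $G : A^{\rm op} \to A'$, where $A'$ is one of $\Cay(D,d)$, $\Cay_m(D,d)$, $\Cay_r(D,d)$.

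First I collect the structural data. By Theorems 8 and 9 applied to opposite algebras (the proofs carry over verbatim), the unique quaternion subalgebra of $A^{\rm op}$ is $D$, so $G$ restricts to an isomorphism between the two copies of $D$. Viewed as a subalgebra of $A^{\rm op}$, however, $D$ carries the opposite of its usual multiplication, so $G|_D$ is an \emph{anti}-automorphism of $D$; by Skolem--Noether applied to anti-automorphisms of a quaternion algebra, $G(u) = a\bar u a^{-1}$ for some $a \in D^\times$. From the version of Theorem 10 transferred to $A^{\rm op}$, the unique nonassociative quaternion subalgebra exists precisely because $c\in L\subset D$, and a direct check shows the map $\tau(u,v)=(\bar u,-v)$ is an anti-isomorphism $\Cay(L,c)\to\Cay(L,\bar c)$, so this subalgebra is isomorphic to $\Cay(L,\bar c)$. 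Hence $d$ also lies in $L^\times\setminus F$, and $G$ restricts to an isomorphism between the two nonassociative quaternion subalgebras, to which Waterhouse's parametrization [W, Thm.~2] applies. Combining this with $G|_D(u)=a\bar u a^{-1}$ forces $G(l) = zl'$ for some $z \in L^\times$, and $a\in L^\times$ or $a\in LJ$, where $D=L\oplus LJ$ with $J^2=h$.

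Next I write $G((u,v)) = \bigl(a\bar u a^{-1},\, z\, a\bar v a^{-1}\bigr)$ and plug into the multiplicativity condition $G(x\circ y) = G(x)\,G(y)$ for generic $x,y \in A^{\rm op}$. Exactly as in the proof of Theorem 18, this yields two equations, one per component; the second is a tautology, the first carries the algebraic content. Using the relation $dN_{L/F}(z)=aca^{-1}$ obtained from $G(l\circ l)=G(l)^2$, the first equation reduces in each of the nine pairs of types to one of the form $c\,\alpha(v',v)=\alpha(v',v)\,aca^{-1}$ for all $v,v'\in D$, where $\alpha(v',v)$ is the $D$-valued expression determined by the placement of $c$ in $A$ versus the placement of $d$ in $A'$. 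This forces $ca=ac$, hence $a\in L$; feeding this back into the first equation reduces it to $cw=wc$ for all $w\in D$, which contradicts $c\in D^\times\setminus F$.

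The main obstacle is pure bookkeeping: the canonical involution $\sigma$ appears through the multiplications of $A$, $A^{\rm op}$, and $A'$ simultaneously, and also through the shape of $G|_D$. One has to track carefully which side each occurrence of $c$ or $d$ sits on after taking opposites and after conjugation by $a$, since the whole point of the theorem is that this ``sidedness'' cannot be reshuffled by an anti-automorphism of $D$ together with an $L$-scaling of $l$. Once the nine sub-cases are laid out side by side, each reproduces the endgame of Theorem 18 and terminates with $c\in F^\times$.
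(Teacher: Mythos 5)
Your structural setup (uniqueness of the quaternion subalgebra, $G|_D$ being an anti\nobreakdash-automorphism $u\mapsto a\sigma(u)a^{-1}$, $G(l)=zl'$ with $z\in L^\times$, and the identification of the nonassociative quaternion subalgebra of $A^{\rm op}$) agrees with the paper. But the endgame is wrong, and the error sits exactly in the step you wave off as ``pure bookkeeping.'' You assert that, as in Theorem 18, ``the second [component equation] is a tautology, the first carries the algebraic content.'' That is true for Theorem 18 because the second component $v'u+v\sigma(u')$ of the multiplication is identical in all three Dickson products and $G$ there restricts to an automorphism of $D$. It fails for the opposite algebra: in $A^{\rm op}$ the second component becomes $vu'+v'\sigma(u)$, and combined with $G(u)=a\sigma(u)a^{-1}$ the second-component condition reduces (uniformly in all nine cases, since it never sees the placement of $c$ or $d$) to
$$\sigma(u')\sigma(v)+u\sigma(v')=\sigma(v')\sigma(u)+\sigma(v)u' \quad\text{for all } u,u',v,v'\in D,$$
which at $u=0$, $v=1$ forces $\sigma(u')=u'$ for all $u'$ --- a contradiction. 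This is precisely the paper's proof: the whole theorem follows from the second component alone, which is why one short computation covers all nine non-isomorphisms at once.

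Your proposed substitute --- that the first-component equation takes the shape $c\,\alpha(v',v)=\alpha(v',v)\,aca^{-1}$, forcing $ca=ac$, then $a\in L$, then $cw=wc$ --- does not survive the computation. Because $G$ acts on $D$ by $u\mapsto a\sigma(u)a^{-1}$ rather than $u\mapsto aua^{-1}$, the first-component identities acquire mismatches between $v$ and $\sigma(v)$; for example for $G:\Cay(D,c)^{\rm op}\to\Cay(D,d)$ one gets $\sigma(v')\,v\,\sigma(c)=\sigma(c)\,v'\,\sigma(v)$, which is not of your claimed form and does not lead to the Theorem 18 endgame ($ca=ac$, $a\in L$, etc.). These first-component equations may well each yield a contradiction by ad hoc substitutions, but you would have to carry out nine genuinely different verifications, none of which follows the mechanism you describe. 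So as written the proof does not go through; the fix is to use the second-component equation, which is where the incompatibility between the order reversal of $(\;)^{\rm op}$ and the anti-automorphism $G|_D$ actually lives.
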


\begin{proof} Let $A={\rm Cay}(D,c)$, $A={\rm Cay}_m(D,c)$ or $A={\rm Cay}_r(D,c)$
 and suppose  there exists an algebra isomorphism
$G:A^{\rm op}\to {\rm Cay}(D,d)$ (respectively, $G:A^{\rm op}\to {\rm Cay}_m(D,d)$ or $G:A^{\rm op}\to {\rm Cay}_r(D,d)$).
Restricting $G$ to the nonassociative quaternion subalgebra ${\rm Cay}(L,c)$ of $A^{\rm op}$ yields an isomorphism
$G_0:{\rm Cay}(L,c)\to {\rm Cay}(L,d)$ and by [W, Thm. 2], either $G_0(u,v)=(u,vz)$ and $c=z\bar z d$ or
$G_0(u,v)=(\bar u,\bar v z)$ and $\bar c=z\bar z d$ for some $z\in L^\times$, for $u,v\in L^\times$. In particular,
$G(l)=G((0,1_D))=G_0((0,1_L))=(0,z)=(z,0)(0,1_D)=zl'$,
i.e., $G(l)=zl'$ for some invertible $z\in L$. Also, $G(c)=G(l^2)=G(l)G(l)=(zl')(zl')=
(0,z)(0,z)=(N_{D/F}(z)d,0)$.
Since $G$ restricted to $D$ is an isomorphism
$$G|_{D^{\rm op}}: D^{\rm op} \to D,$$
 there exists an element $a\in D^\times$
such that $G(u)=a\sigma( u)a^{-1}$ for all $u\in D$. Thus $G(c)=a \sigma( c)a^{-1}$ and $N_{D/F}(z)d=a \sigma( c) a^{-1}$.
Since $G$ is multiplicative we have
$$G((0,v))= G((v,0)(0,1))=G((v,0))G((0,1))=(ava^{-1})(0,z)=(0,za\sigma(v)a^{-1} ),$$
 so that $G((u,v))=(a\sigma(u)a^{-1},z a\sigma(v)a^{-1})$.
 Moreover, we must have
 $$G((u,v)(u',v'))=G(u,v)G(u',v')$$
 for all $u,u',v,v'\in D$.
 \\ (i) Suppose $A={\rm Cay}(D,c)$  and $G:A^{\rm op}\to {\rm Cay}(D,d)$. Then the multiplicativity of $G$ is equivalent to
  \begin{enumerate}
\item $a\sigma(v')v\sigma(c)a^{-1}=d \sigma(a^{-1})v'\sigma(a) a\sigma(v)a^{-1}N_{D/F}(z)$
and
\item $za\sigma(u')\sigma(v)a^{-1}+zau\sigma(v')a^{-1}=za\sigma(v')a^{-1}a\sigma(u)a^{-1}+za\sigma(v)a^{-1}\sigma(a)^{-1}
u'\sigma(a)$
\end{enumerate}
for all $u,u',v,v'\in D$.
Using that $dN_{D/F}(z)=a\sigma(c)a^{-1}$ and that $a$ is invertible, this is the same as
 \begin{enumerate}
\setcounter{enumi}{2}
\item
$\sigma(v')v \bar c= \bar c \sigma(v')\sigma(v)$
and
\item $\sigma(u')\sigma(v)+u\sigma(v')=\sigma(v')\sigma(u)+\sigma(v)u'$
\end{enumerate}
for all $u,u',v,v'\in D$. Put $u=0$ in (4), then
$$\sigma(u')\sigma(v)=\sigma(v)u'$$
for all $u',v\in D$, a contradiction.
 \\ Since equation (2) and consequently also equation (4) remain the same in case we study
 $G:A^{\rm op}\to {\rm Cay}_m(D,d)$ or $G:A^{\rm op}\to {\rm Cay}_r(D,d)$, the same contradiction
 implies the rest of (i). The same argument applies in (ii) and (iii).
\end{proof}

\begin{example}  Let $\mathbb{H}=(-1,-1)_\mathbb{R}$ denote Hamilton's quaternion algebra.
Then for each $c\in \mathbb{C}$ with $c\not\in\mathbb{R}$, ${\rm Cay}(\mathbb{H},c)$, ${\rm Cay}_m(\mathbb{H},c)$,
${\rm Cay}_r(\mathbb{H},c)$, ${\rm Cay}(\mathbb{H},c)^{\rm op}$, ${\rm Cay}_m(\mathbb{H},c)^{\rm op}$ and
${\rm Cay}_r(\mathbb{H},c)^{\rm op}$ are mutually non-isomorphic unital division algebras over $\mathbb{R}$.
 All contain $\mathbb{H}$ and ${\rm Cay}(\mathbb{R},c)$ as the only (nonassociative) quaternion subalgebras.
 For instance, ${\rm Cay}(\mathbb{H},i)\cong {\rm Cay}(\mathbb{H},fi)$, ${\rm Cay}_m(\mathbb{H},i)\cong
{\rm Cay}_m(\mathbb{H},fi)$ and ${\rm Cay}_r(\mathbb{H},i)\cong {\rm Cay}_r(\mathbb{H},fi)$ for all $f\in \mathbb{R}^\times$
are mutually non-isomorphic division algebras over $\mathbb{R}$. For non-real $c,d\in \mathbb{C}^\times$ we have
$$ {\rm Cay}(\mathbb{H},c)\cong {\rm Cay}(\mathbb{H},d), {\rm Cay}_m(\mathbb{H},c)\cong {\rm Cay}_m(\mathbb{H},d)
\text{ or } {\rm Cay}_r(\mathbb{H},c)\cong {\rm Cay}_r(\mathbb{H},d)
$$
$$ \text{ iff } d=z\bar z c   \text{ for some } z\in \mathbb{C} \text{ with }z\bar z=1.$$

\end{example}

\section{Division algebras of dimension 16 and higher}

An involution $\can:A\to A$ on an $F$-algebra $A$ is called {\it scalar} if all  $\bar xx$ are elements of $F1$.

\begin{theorem} Let $C$ be a flexible division algebra over $F$  with scalar involution $\sigma=\can$
(e.g., an octonion division algebra).
 The Cayley-Dickson doublings ${\rm Cay}(C,c)$, ${\rm Cay}_m(C,c)$, ${\rm Cay}_r(C,c)$,
 ${\rm Cay}^l(C,c)$, ${\rm Cay}^l_m(C,c)$ and ${\rm Cay}^r_r(C,c)$
 are division algebras for any choice of $c\in C^\times\setminus F$
 such that $N_{C/F}(c)\not\in N_{C/F}(C^\times)^2$, i.e. in particular for
 any $c\in C^\times\setminus F$ such that $N_{C/F}(c)\not\in F^{\times 2}$.
\end{theorem}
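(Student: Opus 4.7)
The plan is to adapt the zero-divisor analysis of Theorem 12 to the nonassociative setting, replacing the direct manipulation of inverses (which used associativity) by a norm computation coming from the scalar involution on $C$. I will treat $A={\rm Cay}(C,c)$ in detail; the other five multiplications are handled identically, since in each case the first coordinate of the product is $uu'$ plus a term whose norm equals $N_{C/F}(c)N_{C/F}(v')N_{C/F}(v)$ (regardless of how the three factors $c,\bar{v'},v$ are parenthesised), while the second coordinate $v'u+v\bar{u'}$ coincides across all six multiplications.

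Suppose $(u,v)(u',v')=(0,0)$ with $(u,v)\neq (0,0)$. The defining equations read
\[
uu'+c(\bar{v'}v)=0,\qquad v'u+v\bar{u'}=0.
\]
A case analysis on which of $u,v,u',v'$ vanish disposes of almost everything. If $v=0$, the division-algebra property of $C$ forces $(u',v')=(0,0)$ exactly as in the proof of Theorem 12. If $v\neq 0$ and $u=0$, the first equation gives $\bar{v'}v=0$ (using invertibility of $c$), hence $v'=0$, and then the second yields $u'=0$. If $v\neq 0$, $u\neq 0$, $v'=0$, the second equation gives $v\bar{u'}=0$ and so $u'=0$. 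Thus the only remaining case is $u,v,u',v'\in C^\times$.

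In that case, let $N=N_{C/F}$. Since $C$ is a composition division algebra (this is where the flexibility + scalar-involution hypotheses enter, giving multiplicativity $N(xy)=N(x)N(y)$), taking norms of the two equations yields
\[
N(u)N(u')=N(c)N(v')N(v),\qquad N(v')N(u)=N(v)N(u').
\]
Eliminating $N(u')$ produces $N(u)^2=N(c)N(v)^2$, so
\[
N(c)=\bigl(N(u)/N(v)\bigr)^2=N(uv^{-1})^2\in N_{C/F}(C^\times)^2,
\]
a contradiction. The ``in particular'' clause follows from $N_{C/F}(C^\times)^2\subseteq F^{\times 2}$. The main technical issue is the multiplicativity of $N$: for the headline example of an octonion division algebra this is Hurwitz's theorem, and the argument goes through in any composition division algebra; verifying that the stated hypotheses (flexible + scalar involution + division) are by themselves enough to force $N$ to be multiplicative — or otherwise reading the theorem as restricting to composition division algebras — is the one step that needs care.
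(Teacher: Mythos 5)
Your proof is correct and follows the same basic strategy as the paper's: dispose of the cases where some coordinate vanishes using that $C$ has no zero divisors, then in the generic case pass to the norm and derive $N_{C/F}(c)\in N_{C/F}(C^\times)^2$, contradicting the hypothesis. The one genuine difference in execution is that you take norms of \emph{both} coordinate equations and eliminate $N_{C/F}(u')$ at the scalar level, whereas the paper first solves the second equation for one unknown (invoking flexibility to justify the rearrangement $(u\bar{s})\bar{s}=u\bar{s}^2$), substitutes into the first, and only then applies the norm to the resulting fivefold product. Your elimination is cleaner, skips that rearrangement entirely, and makes it transparent why the argument is insensitive to where $c$ sits and how the products are parenthesised. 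As for the issue you flag: the multiplicativity $N_{C/F}(xy)=N_{C/F}(x)N_{C/F}(y)$ is used by the paper exactly as silently as you fear (the norm is distributed over all factors of that fivefold product without comment), so your write-up is if anything the more careful of the two. Your worry is substantive, not pedantic: flexibility plus a scalar involution do not by themselves force the norm to be multiplicative (the sedenions are flexible with scalar involution and anisotropic norm, yet have zero divisors), and by Hurwitz a nondegenerate multiplicative norm confines $C$ to dimension at most $8$; so for the theorem as literally stated one must either argue that the division hypothesis restores composition or read $C$ as a composition division algebra, as the paper's parenthetical ``e.g., an octonion division algebra'' suggests. This gap is common to both proofs and is not something your argument introduces.
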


\begin{proof} Let   $N:C\to F$, $N(x)=N_{C/F}(x)=x\bar x=\bar x x$ be the norm of $C$.
We show that $A={\rm Cay}(C,c)$ has no zero divisors: suppose
$$(0,0)=(r,s)(u,v)=(ru+c (\bar v s),vr+s\bar u)$$
for $r,s,u,v\in C$. This is equivalent to
$$ru+c (\bar v s)=0\text{ and } vr+s\bar u=0.$$
Assume $s=0$, then $ru=0$ and $vr=0$. Hence either $r=0$ and so $(r,s)=0$ or $r\not=0$ and
$u=v=0$.

So let $s\not=0$. Then $s\in C^\times$ and $s\bar u=-vr$.
Thus $ u \bar s=-\bar r\bar v$,
$ (u \bar s)\bar s=u \bar s^2=(-\bar r\bar v) \bar s$ (since $A $ is flexible), thus
$$u=-\frac{1}{N(s)^2}((\bar r\bar v) \bar s)s^2$$
plugged into $ru+c (\bar v s)=0 $ implies that
$$\frac{1}{N(s)^2}r(((\bar r\bar v) \bar s)s^2)=c (\bar v s). $$
Applying the norm we obtain that
$$\frac{1}{N(s)^4}N(r)N( v)N( r) N(s)^3=N(c) N(s)N( v). $$
Therefore
$$\frac{1}{N(s)}N(r)N( v)N( r) =N(c) N(s)N( v), $$
i.e.
$$N(r)^2N( v) =N(c) N(s)^2 N( v), $$

If $v=0$ then $ru=0$ and $s\bar u=0$, thus $u=0$ and $(u,v)=0$.
If $v\not=0$ then $N(v)\not=0$ and we get $N(c)=N(\frac{r}{s})^2$, a contradiction to our initial assumption that
$N(c)\not\in N(D^\times)^2$, unless $r=0$. However, if $r=0$ (and $s\not=0$ as assumed above)
 then the initial two equalities give $u=0$ and $v=0$, so that $(u,v)=(0,0)$.

 The other cases are treated similarly, the argument remains the same, since it is unaffected by the placement of
 the scalar $c$ or by the placement of the parentheses.
\end{proof}

\begin{theorem} Let $F$ have characteristic not 2 and let  $C$ be an octonion division algebra,
 $c\in C^\times\setminus F$. Then
$C$ is the only octonion subalgebra of the algebras ${\rm Cay}(C,c)$, respectively ${\rm Cay}_r(C,c)$.
If there is a quaternion algebra $D$ such that ${\rm Cay}(D,d)$ is a subalgebra of ${\rm Cay}(C,c)$, respectively
${\rm Cay}_r(D,d)$ of ${\rm Cay}_r(C,c)$, then $D$ is a subalgebra of $C$ and $d=c.$
\end{theorem}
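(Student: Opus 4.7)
The plan is to prove both claims via arguments that parallel Theorem~8 (for octonion subalgebras) and Theorem~14 (for Dickson subalgebras), adapted to handle the non-associativity of the octonion algebra $C$.

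\emph{Part~1.} Let $B$ be an octonion subalgebra of $\mathrm{Cay}(C,c)$. Since $\mathrm{char}\,F\neq 2$, $B$ is quadratic, so every trace-zero $X\in B$ satisfies $X^2\in F$. Writing $X=(u,v)_C$ in $C\oplus C$,
\[
X^2=\bigl(u^2+c\,N_{C/F}(v),\;v(u+\bar u)\bigr)_C,
\]
so $X^2\in F$ forces $v(u+\bar u)=0$ and $u^2+c\,N_{C/F}(v)\in F$. If $v\neq 0$, invertibility of $v$ in the division algebra $C$ forces $\bar u=-u$, whence $u^2=-N_{C/F}(u)\in F$ and $c\,N_{C/F}(v)\in F$, forcing $c\in F$, a contradiction. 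So $v=0$ and $X\in C$. Since $B$ is spanned by $1$ together with its trace-zero elements, $B\subseteq C$, and dimensions give $B=C$.

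\emph{Part~2.} Let $\iota\colon B'=\mathrm{Cay}(D,d)\hookrightarrow A=\mathrm{Cay}(C,c)$. Every element of the quaternion subalgebra $D\subset B'$ is quadratic; applying the argument of Part~1 to $X=\iota(d)$ for each trace-zero $d\in D$ yields $\iota(D)\subseteq C$, and I identify $D$ with its image. Write $\iota(L')=(u_0,v_0)_C\in A$ for $L'=(0,1_D)\in B'$, so $u_0,v_0\in C$. The relation $L' u=\bar u L'$ in $B'$ translates to $u_0 u=\bar u u_0$ in $C$ for all $u\in D$. Decomposing $C=D\oplus DJ$ as a classical Cayley-Dickson doubling of $D$ (with $J^2\in F^\times$) and writing $u_0=p+qJ$ with $p,q\in D$, this reduces to $pu=\bar u p$ for all $u\in D$, which forces $p=0$ by the standard trace-zero computation in $D$; hence $u_0=qJ\in DJ$. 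Next, multiplicativity of $\iota$ on $(0,v)(0,v')$ in $B'$ (in the case $u=0$, $v'=0$) gives $[v_0,v,\bar u']=0$ in $C$ for all $v,\bar u'\in D$; writing $v_0=p'+q'J$, direct computation yields the associator identity
\[
[v_0,v,w]=q'[\bar v,\bar w]\,J\quad\text{in }C,
\]
so the non-commutativity and division-algebra property of $D$ force $q'=0$, i.e.,\ $v_0\in D$.

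With $v_0\in D$, the remaining multiplicativity (case $u=0$, $u'=0$) reads as follows: if $u_0\neq 0$ (so $q\neq 0$), it collapses to $v\,v_0\,v'=v'\,v_0\,v$ in $D$ for all $v,v'\in D$, forcing $v_0\in Z(D)=F^\times$; the next equation then specializes to $u'\bar v=v\bar u'$ for all $u',v\in D$, and setting $u'=1$ gives $\bar v=v$ for every $v\in D$, contradicting non-commutativity of $D$. Hence $u_0=0$, so $\iota(L')=(0,v_0)_C$ with $v_0\in D^\times$, and the equation $(L')^2=d$ yields $d=c\,N_{D/F}(v_0)$; in particular $c\in D$, so $D$ contains $c$. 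Rescaling $L'$ by an element of $F^\times$ (or invoking Theorem~14) normalizes $N_{D/F}(v_0)=1$ and gives $d=c$. The principal obstacle is the explicit associator calculation in the octonion $C$ that forces $v_0\in D$; this step uses both the alternativity of $C$ and the non-commutativity of the division algebra $D$.
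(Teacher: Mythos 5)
Your Part~1 is correct and is essentially the paper's own argument in streamlined form: the paper runs the computation $X^2=(u^2+cN_{C/F}(v),\,v(u+\bar u))\in F$ only for the standard basis vectors $X,Y,Z$ of a putative octonion subalgebra and then chases the remaining basis relations, whereas you apply it to all trace-zero elements at once and invoke quadraticity of $B$; both versions hinge on the same dichotomy ($v=0$, or $v$ invertible forces $\bar u=-u$ and then $c\in F$). For Part~2 your route is genuinely more thorough than the paper's, which simply asserts $D\oplus D\subset C\oplus C$ and computes $l^2$ as though the embedding sent $(0,1_D)$ to $(0,1_C)$; you correctly see that one must pin down $\iota((0,1_D))=(u_0,v_0)$, and your derivations of $u_0\in DJ$ (from $l'u=\bar u\,l'$) and of $v_0\in D$ (via the associator identity $[v_0,v,w]=q'[\bar v,\bar w]J$) both check out. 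You do omit the ${\rm Cay}_r$ case, but it is genuinely analogous.

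The endgame of Part~2, however, has a gap. The second component of $\iota((0,v)(0,v'))=\iota((0,v))\iota((0,v'))$ is, as you say, $q(vv_0v'-v'v_0v)=0$; but when $q\neq 0$ this forces $v_0=0$, not $v_0\in F^\times$: taking $v'=1$ gives $v_0\in Z(D)=F$, and substituting back gives $v_0[v,v']=0$ for all $v,v'\in D$, hence $v_0=0$ since $D$ is noncommutative. Moreover, your ``next equation'' $u'\bar v=v\bar u'$ does not arise: once $u_0\in DJ$ and $v_0\in D$, the mixed conditions coming from $(u,0)(0,v')$ and $(0,v)(u',0)$ are identically satisfied, and the first component of the $(0,v)(0,v')$ case only yields $gN_{D/F}(q)+N_{D/F}(v_0)c=d$, which is no contradiction. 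The correct way to close the argument from the equation you already have is: $q\neq 0$ forces $v_0=0$, so $\iota$ maps the $8$-dimensional algebra ${\rm Cay}(D,d)$ isomorphically onto $C=D\oplus DJ$, which is impossible because $C$ is alternative while ${\rm Cay}(D,d)$ is not third-power-associative (Lemma 4(iii)); hence $q=0$, and then $v_0\neq 0$ by injectivity, giving $\iota(l')=(0,v_0)$ and $d=cN_{D/F}(v_0)$. (Your final normalization from $d=cN_{D/F}(v_0)$ to $d=c$ is the same implicit rescaling the paper makes, so I do not count it against you.)
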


\begin{proof} The proof for $A={\rm Cay}(C,c)$ resembles the one of Theorem 8:

 Suppose there is an octonion subalgebra $C'={\rm Cay}(F,e,f,g)$ in $A$.
  Then $C'$ has the standard $F$-basis
 $$\{1,X,Y,XY,Z,XZ,YZ,(XY)Z\}$$ with $X^2=e$, $Y^2=f$, $Z^2=g$, and
 $uZ=Z\sigma( u)$ for  $u=X,Y,XY\in (e,f)_F$.

 Hence there is an element  $X=(u,v)\in A$
with $u,v\in C$ such that $X^2=e\in F^\times$ and an element $Y=(w,z)\in A$
with $w,z\in C$ such that $Y^2=f\in F^\times$ and $XY+YX=0$ (coming from the quaternion subalgebra
$(e,f)_F$). The first is equivalent to
$$(1) \quad u^2+cN_{D/F}(v)=e \text{ and   } (2) \quad  vu+ v\sigma(u)=0.$$
 Thus
if $v=0$, then $u^2=e$ and $X=(u,0)\in C$.\\
Now a similar proof as for Theorem 8 (i)
implies the assertion:
if $v\not=0$ then $v$ is invertible and $\sigma(u)=-u$.
This implies $u^2=-N_{C/F}(u)$, thus $e+N_{C/F}(u)=cN_{C/F}(v)$ i.e.
$N_{C/F}(v^{-1})(e+N_{C/F}(u))=c$ which is a contradiction since
the right hand side lies in $C$ and not in $F$, while the left hand side lies in $F$.\\
Analogously, the second equation implies $w^2=f$ and $Y=(w,0)$, $w\in C$. Hence $(0,0)=XY+YX=(uw+wu,0)$ means
$uw+wu=0$ and so the standard basis $1,X=u,Y=w,XY=uw$ for the octonion algebra $(e,f)_F$ lies in $C$ and we obtain
$(e,f)_F\subset C$.

The same observations also prove that $(f,g)_F\subset C$ and that $Z=(x ,0)$ with $x\subset C$.

Moreover, $uZ=Z\bar u$ for  $u=X,Y,XY\in (e,f)_F$ yields the following:
 $XZ=Z\sigma( X)$ is equivalent to $ux=x\sigma( u)$,  $YZ=Z\sigma (Y)$ is equivalent to $vx=x\sigma(v)$ and
  $(XY)Z=Z\bar (XY)$ is equivalent to $(uv)x=x\sigma( uv).$ Thus the standard basis for the octonion algebra
$C'={\rm Cay}(F,e,f,g)$ lies in $C$ and we obtain $C'=C$.
\\ For the proof of the second part of the statement we show that $D\subset C$ as above and conclude
$D\oplus D\subset C\oplus C=A$. Therefore $l=(0,1_D)\in {\rm Cay}(D,d)$ satisfies $l^2=d$ and
$l^2=c$ and again $c=d\in D$.

The proof for the other algebra is analogous.
\end{proof}

\begin{theorem}  Let  $C$ be an octonion division algebra and $c\in C^\times\setminus F$
contained in a separable quadratic field extension
 of $F$. If
${\rm Cay}(L,e)$ is a nonassociative quaternion subalgebra of the algebra $A={\rm Cay}(C,c)$ or $A={\rm Cay}_r(C,c)$
 then $c\in  L\subset C$ and ${\rm Cay}(L,e)={\rm Cay}(L,c)$.
\\ If $c\in C^\times\setminus F$ is contained in a purely inseparable quadratic field extension $K$
of $F$, then $A$ has no nonassociative quaternion subalgebras. If
${\rm Cay}(K',e)$ is a subalgebra of $A$, where $K'$ is a purely inseparable quadratic field extension of
$F$, then ${\rm Cay}(K',e)={\rm Cay}(K,c)$.
\end{theorem}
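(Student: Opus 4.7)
\medskip

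The plan is to mirror the proof of Theorem~10, with the associative quaternion algebra $D$ there replaced by the octonion division algebra $C$; the core technical input is the same zero-divisor computation already carried out in the first half of the proof of Theorem~28.

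First I assume that $\Cay(L,e)$ embeds as a nonassociative quaternion subalgebra of $A=\Cay(C,c)$, and work in characteristic $\neq 2$ with $L=F(\sqrt{f})$. The image of the generator $i\in L\subset \Cay(L,e)$ is an element $Y=(w,z)\in A$ with $Y^2=f\in F^\times$. Expanding $Y^2$ using the multiplication of $A$ gives
\[
w^2+c\,N_{C/F}(z)=f\quad\text{and}\quad z(w+\sigma(w))=0.
\]
If $z\neq 0$, then $z$ is invertible in $C$, hence $\sigma(w)=-w$ and $w^2=-N_{C/F}(w)\in F$, whence $c\,N_{C/F}(z)=f+N_{C/F}(w)\in F$ with $N_{C/F}(z)\in F^\times$ forces $c\in F$, contradicting the hypothesis. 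Thus $z=0$, $w^2=f$, and $L=F(w)\subset C$, yielding the subspace inclusion $L\oplus L\subset C\oplus C=A$. Identifying the canonical doubling element $l=(0,1_L)\in \Cay(L,e)$ with $(0,1_C)\in A$, I obtain $l^2=e$ in the subalgebra and $l^2=c$ in $A$; hence $e=c\in L\setminus F$, which forces $c\in L$ and $\Cay(L,e)=\Cay(L,c)$ with $L=F(c)$.

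In characteristic $2$ the same scheme works upon replacing $L=F(\sqrt f)$ by $L=F(i)$, $i^2+i=f$, and using an element $X\in A$ with $X^2+X=f$. For the purely inseparable case, the identification $e=c$ above would place the separable element $e\in L$ inside the purely inseparable extension $F(c)$, which is impossible; hence $A$ has no nonassociative quaternion subalgebra. For a candidate subalgebra $\Cay(K',e)\subset A$ with $K'$ a purely inseparable quadratic extension, the generator of $K'$ is some $Y\in A$ with $Y^2\in F$, and the same zero-divisor argument places $K'\subset C$; the analogue of the identification step then yields $e=c$ and $K'=F(c)=K$. For $A=\Cay_r(C,c)$ the entire argument runs verbatim, since the placement of $c$ in the formula still gives $(0,1_C)^2=c$.

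The main obstacle lies in the identification step: once $L\subset C$ is established, one needs that the ``$j$''-element of any copy of $\Cay(L,e)$ inside $A$ can be identified with $l=(0,1_C)$, i.e.\ that $\Cay(L,e)\subset A$ sits inside $A$ via the natural inclusion $L\oplus L\hookrightarrow C\oplus C$. As in the proof of Theorem~10, this is carried implicitly by the equality $(0,1_L)=(0,1_C)$; making it fully rigorous, by showing that the second ``axis'' of the nonassociative quaternion subalgebra is uniquely located inside $A$ once the nucleus $L\subset C$ is fixed, is the only step that is not a routine expansion of the defining multiplications.
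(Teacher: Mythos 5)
Your reconstruction follows the paper's proof exactly: the element $Y=(w,z)$ with $Y^2=f$ (resp.\ $X^2+X=f$ in characteristic $2$) is forced to have $z=0$ by the same computation used for Theorem 28, giving $L\subset C$, and then the doubling element of the embedded ${\rm Cay}(L,e)$ is identified with $(0,1_C)$ to get $e=l^2=c$; the purely inseparable case is disposed of by the same contradiction. So as far as the paper is concerned, your proposal is the intended argument.

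The step you flag as ``the main obstacle'' is, however, left exactly as open in the paper as in your write-up: the paper simply asserts $l=(0,1_L)=(0,1_C)\in{\rm Cay}(L,e)$ after noting $L\oplus L\subset C\oplus C$, with no justification that the embedded copy of ${\rm Cay}(L,e)$ is the subspace $L\oplus L$. This is a genuine gap rather than a formality. Once $c\in L$, for any invertible $q\in C$ the subspace $\{(x,qy):x,y\in L\}$ is closed under the multiplication of ${\rm Cay}(C,c)$ (by Artin's theorem the subalgebra generated by $q$ and $L$ is associative, so $\sigma(qy')(qy)=N_{C/F}(q)\sigma(y')y\in L$), and $(x,y)\mapsto(x,qy)$ identifies it with ${\rm Cay}(L,\,cN_{C/F}(q))$; its doubling element is $(0,q)$, not $(0,1_C)$. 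Hence the argument as written only gives $e\in c\,N_{C/F}(C^\times)$, and the conclusion ${\rm Cay}(L,e)={\rm Cay}(L,c)$ needs extra input (over $\mathbb{R}$ it survives up to isomorphism because every positive real is a norm from $L$, but over $\mathbb{Q}$ one obtains subalgebras ${\rm Cay}(L,cN_{C/F}(q))$ not isomorphic to ${\rm Cay}(L,c)$). A further small point: your proof obtains $c\in L$ only \emph{via} the identification $e=c$; it is safer to derive $c\in L$ directly by writing the second basis element of the subalgebra as $j=(p,q)$ with $q\neq 0$ (which must happen, since a two-generated subalgebra of $C$ is associative while ${\rm Cay}(L,e)$ is not), whence $\sigma(p)=-p$ and $cN_{C/F}(q)=e+N_{C/F}(p)\in L$, so $c\in L$ independently of where $j$ sits.
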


\begin{proof}  Let $F$ have characteristic not 2 and $ L=F(\sqrt{f})$. Then there is an
element $Y=(w,z)\in A$, $w,z\in C$, such that $Y^2=f\in F^\times$.
Analogously as in the  proof of Theorem 27, this implies $Y=(w,0)$ and $w^2=f$, $w\in C$, so $ L\subset C$
and $L\oplus L\subset C\oplus C=A$. Therefore $l=(0,1_L)=(0,1_C)\in {\rm Cay}(L,e)$ satisfies $l^2=e$ and
$l^2=c$ and we obtain $c=e\in L$.
\\  Let $F$ have characteristic 2. $L$ is a separable quadratic field extension of $F.$
Hence there is an element
 $X=(w,z)\in A$, $w,z\in C$, such that $X^2+X=f\in F^\times$. As in the proof of Theorem 8, resp. 9,
 this implies $X=(w,0)$ and $w^2+w=f$, $w\in C$, so $ L\subset C$
and $L\oplus L\subset C\oplus C=A$. Therefore $l=(0,1_L)\in {\rm Cay}(L,e)$ satisfies $l^2=e$ and
$l^2=c$ and again $c=e\in L$. If $c$ lies in a purely inseparable quadratic field extension, this is a contradiction.
Thus in this case there are no nonassociative quaternion division subalgebras in $A.$
\end{proof}

\begin{example} Let $F=\mathbb{Q}$ and $C={\rm Cay}(\mathbb{Q},a,b,e)$ an octonion algebra.

For all $a,b,e<0$, $C$ is  a division algebra and ${\rm Cay}(C,c)$, ${\rm Cay}_m(C,c)$, ${\rm Cay}_r(D,c)$,
 ${\rm Cay}^l(C,c)$, ${\rm Cay}^l_m(C,c)$ and ${\rm Cay}^r_r(C,c)$
are division algebras for all
$c=x_0+x_1i+x_2j+x_3k+x_4l+x_5il +x_6jl+x_7kl$, such that the positive rational number
$$N_{D/F}(c)=x_0^2-ax_1^2-bx_2^2+abx_3^2-ex_4^2+aex_5^2+bex_6^2-abex_7^2$$
 is not a square. E.g., let $\mathbb{O}={\rm Cay}(\mathbb{Q},-1,-1,-1)$. If $
c= x_0^2+x_1^2+x_2^2+x_3^2+x_4^2+x_5^2+x_6^2+x_7^2$ is not a
 square then ${\rm Cay}(\mathbb{O},c)$, ${\rm Cay}_m(\mathbb{O},c)$, ${\rm Cay}_r(\mathbb{O},c)$,
  ${\rm Cay}^l(\mathbb{O},c)$, ${\rm Cay}^l_m(\mathbb{O},c)$ and ${\rm Cay}^r_r(\mathbb{O},c)$
are division algebras and $\mathbb{O}$ is the only octonion subalgebra of
 ${\rm Cay}(\mathbb{O},c)$ and ${\rm Cay}_r(\mathbb{O},c)$.
\end{example}

\begin{theorem} Let $F$ have characteristic not 2 and let  $C$ and $H$ be octonion division algebras, $c\in C^\times\setminus F$,
$d\in H^\times\setminus F$. If ${\rm Cay}(C,c)\cong {\rm Cay}(H,d)$ or ${\rm Cay}_r(C,c)\cong
{\rm Cay}_r(H,d)$ then the algebras $C$ and $H$ are isomorphic.
 Moreover, if $c$ is contained in a quadratic field extension of $C$ then either both $c$ and $d$
 lie in a separable quadratic field extension $L$ or they both lie in
a purely inseparable quadratic field extension $K$ of $F$, with $L$ (resp. $K$) being contained in $C$ and $H$.
If $c,d\in L$ then either  $c=x\sigma(x) d$ or $\sigma( c)=x\sigma(x) d$ for some $x\in L^\times$.
If $c,d\in K$ then ${\rm Cay}(K,c)\cong {\rm Cay}(K,d)$.
\end{theorem}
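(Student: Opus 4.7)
The plan is to imitate the strategy used for Theorem 13 in the quaternion case, with the quaternion uniqueness results (Theorems 8, 9, 10) replaced by their octonion analogues proved earlier in this section (Theorems 27, 29). First I would observe that any algebra isomorphism must send octonion subalgebras to octonion subalgebras. By Theorem 27, $C$ is the only octonion subalgebra of ${\rm Cay}(C,c)$ and $H$ is the only octonion subalgebra of ${\rm Cay}(H,d)$, so the isomorphism restricts to an isomorphism $C\cong H$. The same reasoning works verbatim for the ${\rm Cay}_r$ case since Theorem 27 covers both.

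Next, assuming $c$ lies in a quadratic field extension of $F$ inside $C$, I would invoke Theorem 29 to separate two cases. In the separable case, $c\in L\subset C$ with $L/F$ a separable quadratic extension, and ${\rm Cay}(L,c)$ is the unique nonassociative quaternion subalgebra of $A={\rm Cay}(C,c)$. In the purely inseparable case, $c\in K\subset C$ with $K/F$ purely inseparable, $A$ has no nonassociative quaternion subalgebras at all, and any subalgebra of the form ${\rm Cay}(K',e)$ with $K'/F$ purely inseparable coincides with ${\rm Cay}(K,c)$. Since any isomorphism preserves both the existence of nonassociative quaternion subalgebras and the type of purely inseparable doublings occurring as subalgebras, $c$ and $d$ must live in quadratic extensions of the same type, and the distinguished subalgebra of ${\rm Cay}(C,c)$ is carried onto the corresponding distinguished subalgebra of ${\rm Cay}(H,d)$.

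In the separable case this yields an isomorphism ${\rm Cay}(L,c)\cong {\rm Cay}(L',d)$ of nonassociative quaternion algebras. Since isomorphic algebras have isomorphic nuclei, $L=L'$, and [W, Thm.~2] then gives $c=x\sigma(x)d$ or $\sigma(c)=x\sigma(x)d$ for some $x\in L^\times$, which is the required conclusion. In the purely inseparable case the same uniqueness argument gives ${\rm Cay}(K,c)\cong {\rm Cay}(K',d)$, and equality of nuclei forces $K=K'$, completing the statement. The ${\rm Cay}_r$ version is identical, using the ${\rm Cay}_r$ portion of Theorem 29.

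The main obstacle is confirming that the restriction of the isomorphism to the distinguished subalgebra is well defined, i.e.\ that the image of ${\rm Cay}(L,c)$ (resp.\ ${\rm Cay}(K,c)$) really lands inside the corresponding unique subalgebra of ${\rm Cay}(H,d)$ rather than in some other quadratic doubling. This is precisely what the \emph{uniqueness} clause of Theorem 29 provides: there is only one nonassociative quaternion subalgebra (resp.\ only one purely inseparable doubling), so its image under the isomorphism must be that unique subalgebra of the target. Once this is granted, the rest of the argument reduces to Waterhouse's classification [W, Thm.~2] exactly as in the quaternion situation of Theorem 13.
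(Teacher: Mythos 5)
Your argument is correct and follows the paper's own proof essentially verbatim: the paper likewise deduces $C\cong H$ from the uniqueness of the octonion subalgebra (Theorem 28 in the paper's numbering, not 27) and then simply refers back to the proof of Theorem 13, which is exactly the case split via Theorem 29 into separable versus purely inseparable quadratic extensions, restriction of the isomorphism to the unique nonassociative quaternion (resp.\ purely inseparable) subalgebra, identification of $L=L'$ (resp.\ $K=K'$) via the nucleus, and the application of [W, Thm.~2] that you spell out.
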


\begin{proof} Every isomorphism maps the octonion subalgebra of ${\rm Cay}(C,c)$ to the octonion subalgebra of
${\rm Cay}(H,d)$, hence $C\cong H$ by Theorem 28. The rest of the assertion is proved analogously as Theorem 13
using Theorem 29.
\end{proof}

\begin{theorem} Let $F$ have characteristic not 2 and $C$ be an octonion division algebra.
\\ (i) Let
$G:{\rm Cay}(C,c)\to {\rm Cay}(C,d)$ be an algebra isomorphism
and suppose that both $c$ and $d$ lie in a quadratic field extension $L\subset C$.
Then $d=m^2 g(c)$ for some $g\in {\rm Aut}(C)$, $m\in F^\times$ and
$$G((u,v))=(g(u),g(v)m^{-1}).$$
Every $g\in {\rm Aut}(C)$ and $m\in F^\times$ indeed yield an automorphism
$G:{\rm Cay}(C,c)\to {\rm Cay}(C,m^2 g(c)),.$
\\ (ii) Let
$G:{\rm Cay}_r(C,c)\to {\rm Cay}_r(C,d)$ be an algebra isomorphism
and suppose that both $c$ and $d$ lie in a quadratic field extension $L\subset C$.
Then $d=m^2 g(c)$ for some $g\in {\rm Aut}(C)$, $m\in F^\times$ and
$$G((u,v))=(g(u),g(v)m^{-1}).$$
Every $g\in {\rm Aut}(C)$ and $m\in F^\times$ indeed yield an automorphism
$G:{\rm Cay}(C,c)\to {\rm Cay}(C,m^2 g(c)).$
\end{theorem}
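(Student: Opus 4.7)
The plan is to mimic the strategy of Theorems~14 and~15, but to exploit alternativity together with the fact that the nucleus of an octonion division algebra equals $F$, in order to force the auxiliary scalar in the second component of $G$ to lie in $F$ rather than merely in $L$ (as was the case in the quaternion setting). I will carry out the argument for (i); the case (ii) is entirely parallel.

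By Theorem~28, $(C,0)$ is the unique octonion subalgebra on each side, so $G$ restricts to an element $g \in {\rm Aut}(C)$, and $F$-linearity forces $G(u,v) = (g(u)+\psi(v),\phi(v))$ for some $F$-linear maps $\psi,\phi \colon C \to C$. Since $\Cay(L,c)$ is a nonassociative quaternion subalgebra of $\Cay(C,c)$, Theorem~29 shows that its image is $\Cay(L',d)$ for some quadratic field extension $L' \subset C$ containing $d$; comparing with $g|_L$ forces $L' = g(L)$, and since $d \in L \cap g(L)$ generates $L$ over $F$ this in turn forces $g(L) = L$. Waterhouse's theorem [W, Thm.~2] applied to the induced isomorphism $\Cay(L,c) \to \Cay(L,d)$ then yields $G(l) = (0,z)$ for some $z \in L^\times$, so $\psi(1) = 0$ and $\phi(1) = z$.

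Now I would impose multiplicativity $G((u,v)(u',v')) = G(u,v)G(u',v')$. The second coordinate with $v = 0$ gives $\phi(v'u) = \phi(v')g(u)$, and then $v' = 1$ forces $\phi(w) = zg(w)$ for every $w \in C$; compatibility of these two identities is equivalent to $[z,g(v),g(w)] = 0$ for all $v,w \in C$, so $z$ lies in the nucleus of $C$. But the nucleus of an octonion division algebra is $F$, so $z \in F^\times$. With $z$ central, what remains of the second-coordinate equation simplifies to $g(v')\psi(v) + g(v)\overline{\psi(v')} = 0$, and substituting $v' = 1$ together with $\psi(1) = 0$ kills $\psi$ entirely. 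Setting $m := z^{-1} \in F^\times$, one obtains $G(u,v) = (g(u),g(v)m^{-1})$, and the first-coordinate equation then collapses (using centrality of $z$ and $\bar z z = z^2$) to $g(c) = dz^2$, i.e.\ $d = m^2 g(c)$. The converse direction is the formal computation already recorded in Section~3.1. The crux of the argument, and the one step that is strictly cleaner in the octonion setting than in the quaternion one, is the nucleus step forcing $z \in F$; this uses essentially that the nucleus of an octonion division algebra equals $F$.
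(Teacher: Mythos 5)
Your argument is correct and reaches the same conclusion by the same overall strategy as the paper: both proofs use the uniqueness of the octonion subalgebra (Theorem 28) and of the nonassociative quaternion subalgebra (Theorem 29) to get $G|_C=g\in{\rm Aut}(C)$ and $G(l)=(0,z)$ with $z\in L^\times$ via [W, Thm.~2], and then exploit multiplicativity to force $z\in F^\times$. The genuine difference is \emph{where} the scalar $z$ is forced into $F$. The paper first shows $G((u,v))=(g(u),zg(v))$ directly from $G((0,v))=G((v,0)(0,1))$ and then extracts $z\in{\rm Comm}(C)=F$ from the \emph{first}-coordinate condition $g(\overline{v'})g(v)=N(z)^{-1}(\overline{g(v')}\,\bar z)(zg(v))$; as printed, the substitution $v=1$ there yields only $(g(\overline{v'})\bar z)z=N(z)g(\overline{v'})$, which is automatic by Artin's theorem, so the paper's route to $\bar z g(\overline{v'})=g(\overline{v'})\bar z$ requires a less trivial specialization than the one displayed. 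You instead extract $z$ from the \emph{second}-coordinate identity $\phi(v'u)=\phi(v')g(u)$ combined with $\phi=zg(\cdot)$, which gives $[z,C,C]=0$, i.e. $z\in{\rm Nuc}_l(C)=F$ (the left nucleus of an alternative algebra equals its nucleus, which is $F$ for an octonion algebra). This is cleaner and more robust, and it correctly explains why the octonion case rigidifies to $m\in F^\times$ while the quaternion case of Theorem 14 does not: for associative $D$ the condition $[z,D,D]=0$ is vacuous, so $z$ may range over $L^\times$. Your a priori ansatz $G(u,v)=(g(u)+\psi(v),\phi(v))$ with $\psi$ killed afterwards is slightly more roundabout than the paper's one-line computation of $G((0,v))$, but it is sound (note $\psi\equiv 0$ also drops out immediately from the first coordinate of $G((u,0)(u',v'))$ with $v'=1$). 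The converse direction is indeed the formal computation of Section 3.1, which carries over verbatim since $m$ is central.
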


\begin{proof} In both (i) and (ii), restricting $G$ to the nonassociative quaternion subalgebra ${\rm Cay}(L,c)$ yields an isomorphism
$G_0:{\rm Cay}(L,c)\to {\rm Cay}(L,d)$ (Theorem 29). By [W, Thm. 2], either $G_0(u,v)=(u,vz)$ and $c=z\sigma( z) d$ or
$G_0(u,v)=(\sigma( u),\sigma( v) z)$ and $\sigma( c)=z\sigma( z) d$ for some $z\in L^\times$, $u,v\in L^\times$. In particular,
$G(l)=G((0,1_C))=G_0((0,1_L))=(0,z)=(z,0)(0,1_C)=zl'$, hence the subspace $Cl$ of ${\rm Cay}(C,c)=C\oplus Cl$ is mapped to the
subspace $Cl'$ of ${\rm Cay}(C,d)=C\oplus Cl'$.

Then $G(c)=G(l^2)=G(l)G(l)=(zl')(zl')=(0,z)(0,z)=(d\bar z z,0)$. $G$ restricted to $C$ is an automorphism $g$ of $C$.
 Thus $d\bar zz=g(c)$ which means
$$dN_{L/F}(z)=g(c).$$
Since $G$ is multiplicative we have
$$G((0,v))= G((v,0)(0,1))=G((v,0))G((0,1))=(g(v),0)(0,z)=(0,z g(v) ),$$
 so that $G((u,v))=(g(u),z g(v))$. $G$ is multiplicative, thus we have
 $$G((u,v)(u',v'))=G(u,v)G(u',v')$$
 for all $u,u',v,v'\in C.$
 In (i), this is
 equivalent to
  \begin{enumerate}
\item $g(c)(g(\overline{v'})g(v))=d((\bar z \overline{g(v')})(g(v)z)),$
and
\item $(g(v')g(u))z+(g(v)g(\overline{u'}))z=(g(v')z)g(u)+(g(v)z)\overline{g(u')}$
\end{enumerate}
for all $u,u',v,v'\in C$.
Using that $dN_{L/F}(z)=g(c)$, (1) is the same as
$$g(c)(g(\overline{v'})g(v))=N(z^{-1})g(z)(( \overline{g(v')}\bar z)(zg(v))),$$
i.e. equivalent to
 \begin{enumerate}
\setcounter{enumi}{2}
\item
$g(\overline{v'})g(v)=N(z^{-1})( \overline{g(v')}\bar z)(zg(v)),$
for all $v,v'\in C.$
\end{enumerate}
Put $v=1$ then this yields $g(\overline{v'})=N(z^{-1}) (g(\overline{v'})\bar z),$ which is equivalent to
$g(\overline{v'})=(g(\overline{v'})\bar z) \bar z^{-1},$ i.e. to
$\bar zg(\overline{v'})=g(\overline{v'})\bar z$ for all $v'\in C$. Thus $z\in {\rm Comm}(C)=F$ and (2)
 is true for all $u,u',v,v'\in C$. Now let $m=z^{-1}$.
\\ In (ii), this is
 equivalent to
  \begin{enumerate}
\item $(g(\overline{v'})g(v))g(c)=(( \overline{g(v')}\bar z)(zg(v)))d,$
\end{enumerate}
and (2) for all $u,u',v,v'\in C$. The proof is now analogous to (i).
\end{proof}

\begin{corollary} Let $F$ have characteristic not 2.
Let $C$ be an octonion division algebra over $F$ and $A = \Cay (C,c)$ or $A = \Cay_r(C,c)$.
Suppose that $c$ lies in a quadratic field extension $L\subset C$ of $F$.
Let $\Psi: {\rm Aut}(A)\to {\rm Aut}(C),$ $ F\mapsto F|_C$.
\\ (i) The maps $G:A\to A$, $G((u, v)) = (g(u), \pm g(v) )$ for all $g\in {\rm Aut}(C)$ are the only isomorphisms
of $A$ unless $i\in F$ and $g(c)=\bar c=-c$. In that case also the maps
 $G((u, v)) = (g(u), \pm i g(v) )$  are isomorphisms of $A$ for all $g\in {\rm Aut}(C)$.
\\ (ii) ${\rm Aut}(A)/{\rm ker} (\Psi)$ is a
normal subgroup of ${\rm Aut}(D)$ and $ {\rm ker} (\Psi)=\{id,G_{-1}\}$ with $G_{-1}((u,v))=(u,-v)$, unless $i\in F$
and $\bar c=-c$.
If $i\in F$ and $\bar c=-c$, then $ {\rm ker} (\Psi)=\{id,G_{-1},G_i, G_{-i}\}$, with $G_{i}((u,v))=(u,iv)$
and $G_{-i}((u,v))=(u,-iv)$.
\\ (iii) The map
$\Psi: {\rm Aut}(A)\to {\rm Aut}(C),$ $ F\mapsto F|_C$ is surjective.
\end{corollary}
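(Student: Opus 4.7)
The plan is to deduce Corollary 33 from Theorem 32 in a manner parallel to how Corollary 16 follows from Theorems 14 and 15. Setting $H=C$ and $d=c$ in Theorem 32 identifies the automorphisms $G\colon A\to A$ with those maps $G((u,v))=(g(u),g(v)m^{-1})$ arising from a pair $(g,m)\in{\rm Aut}(C)\times F^{\times}$ that satisfies the scalar constraint $c=m^{2}g(c)$. Each of the three parts is then a case analysis of the solution set of this equation.

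For part (i) the key point is that the restriction of $G$ to the unique nonassociative quaternion subalgebra $\Cay(L,c)\subset A$ (Theorem 29) must, by [W,~Thm.~2], be of one of two standard forms; hence $g|_{L}$ is either ${\rm id}_{L}$ or the nontrivial $F$-automorphism $\can|_{L}$. If $g|_{L}={\rm id}_{L}$ then $g(c)=c$, so $m^{2}=1$ and $m=\pm 1$, producing the generic maps $G((u,v))=(g(u),\pm g(v))$. If $g|_{L}=\can|_{L}$ then $g(c)=\bar c$, so $m^{2}=c/\bar c$; writing $c=\alpha+\beta\sqrt{d}\in L=F(\sqrt{d})$ with $\beta\ne 0$, one checks that $c/\bar c\in F^{\times}$ iff $\alpha=0$, i.e.\ iff $\bar c=-c$, and then $m^{2}=-1$ is solvable iff $i\in F$, producing the exceptional maps $G((u,v))=(g(u),\pm i\,g(v))$ up to a choice of sign.

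Part (ii) is the specialisation of (i) to $g={\rm id}$. In the generic case $g={\rm id}$ forces $m=\pm 1$, hence $\ker(\Psi)=\{{\rm id},G_{-1}\}$. In the exceptional case $i\in F$ and $\bar c=-c$ the additional pairs $(g,m)=({\rm id},\pm i)$ have to be inserted; one verifies that $G_{\pm i}\in{\rm Aut}(A)$ by a direct substitution into the multiplication of $\Cay(C,c)$, using that $i\in F$ lies in the nucleus of $C$, so that $\overline{iv'}(iv)=i^{2}\bar v'v=-\bar v'v$, and the resulting sign is absorbed by $\bar c=-c$. Normality of $\ker(\Psi)$ in ${\rm Aut}(A)$ is automatic because $\Psi$ is a group homomorphism.

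For part (iii), given $g\in{\rm Aut}(C)$ the natural lift is $G((u,v))=(g(u),g(v))$, which is multiplicative precisely when $g(c)=c$ (every algebra automorphism commutes with the scalar involution $\can$). The main obstacle, and the subtle contrast with the quaternion Corollary 16\,(iii), is handling those $g$ with $g(c)\ne c$: one must either solve $c=m^{2}g(c)$ in $F^{\times}$, or precompose $g$ with an automorphism of $C$ fixing $c$ so that the composite becomes a valid lift. Making this step work rests on the large symmetry group of an octonion division algebra, in particular on the transitivity of ${\rm Aut}(C)$ on elements with a prescribed minimal polynomial, and this is the point that will require the most care.
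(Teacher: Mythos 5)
Your treatment of part (i) follows the paper's route exactly: invoke Theorem 32 to write every automorphism as $G((u,v))=(g(u),g(v)m^{-1})$ subject to $c=m^{2}g(c)$, use the Waterhouse form of $G$ restricted to the unique nonassociative quaternion subalgebra $\Cay(L,c)$ to force $g(c)\in\{c,\bar c\}$, and solve for $m$; your computation that $c/\bar c\in F^{\times}$ forces $\bar c=-c$ and $m=\pm i$ is correct and in fact more explicit than the paper's two-line argument. The difficulties are in (ii) and (iii). In (ii), your verification that $G_{\pm i}\colon(u,v)\mapsto(u,\pm iv)$ is an automorphism fails: in $\Cay(C,c)$ the product $(u,iv)(u',iv')$ has first component $uu'+c(\overline{iv'}(iv))=uu'-c(\bar v'v)$, while $G_{i}((u,v)(u',v'))$ has first component $uu'+c(\bar v'v)$; the sign sits on the entire term $c(\bar v'v)$, the hypothesis $\bar c=-c$ never enters, and equality would force $2c=0$. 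Equivalently, in your own framework $G_{i}$ is the map attached to $(g,m)=({\rm id},-i)$ and is therefore an isomorphism $\Cay(C,c)\to\Cay(C,-c)$, an automorphism only if $c=-c$. The exceptional automorphisms you construct in (i) have $g(c)=\bar c\neq c$, hence restrict nontrivially to $C$ and cannot lie in $\ker(\Psi)$; your own classification therefore yields $\ker(\Psi)=\{{\rm id},G_{-1}\}$ in every case. (The paper asserts the larger kernel and disposes of it with ``the rest is obvious,'' so there is no hidden argument you are failing to reproduce, but the substitution you describe does not prove the stated claim.)

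Part (iii) is simply not proved: you explicitly defer ``the point that will require the most care,'' and neither of your proposed completions can succeed. By your own part (i), any $G\in{\rm Aut}(A)$ restricts on $\Cay(L,c)$ to one of the two Waterhouse forms, so $\Psi(G)(c)\in\{c,\bar c\}$; independently, since $g$ preserves the norm and trace of $C$, the equation $c=m^{2}g(c)$ forces $m^{4}=1$ and hence $g(c)\in\{c,-c\}$. Thus ${\rm im}(\Psi)$ lies in the stabiliser of $\{c,\bar c\}$, a proper subgroup of ${\rm Aut}(C)$ (the automorphism group of an octonion algebra moves $c$ off the plane $F+Fc$), and precomposing $g$ with an automorphism fixing $c$ leaves $g(c)$ unchanged, so no lift exists for such $g$. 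The paper's proof of (iii) consists of the single word ``trivial,'' so again there is no argument to recover; but as written your proposal establishes only (i), gives an incorrect verification for the exceptional kernel in (ii), and leaves (iii) open while your own (i) is in direct tension with it.
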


\begin{proof} (i) $G:A\to A$ is an isomorphism if and only if there are elements $m\in F^\times$,
 $g\in {\rm Aut}(C)$ such that $G((u, v)) = (g(u), g(v) m^{-1})$ and $c=m^2g(c)$.
 Either $c=m^2c$ and hence $m=\pm 1$, or $c=m^2\bar c$  by Theorem 32. In the latter case, it follows that
 $\bar c=-c$ and $m=\pm i$.
\\ (ii) Let $F\in {\rm Aut}(A)$, then $F$ stabilizes $C$, therefore $F|_C\in {\rm Aut}(C)$ and
 $\Psi: F\mapsto F|_C$ is a homomorphism with kernel ${\rm ker} (\Psi)$ which is a normal subgroup.
The rest is obvious.
 \\ (iii) is trivial.
 \end{proof}

\bigskip
{\it Acknowledgements:} The author would like to thank E Darp\"o for his comments and suggestions on an earlier version
of this paper.

\end{document}